\documentclass[12pt,twoside]{amsart}
\usepackage{amssymb}
\usepackage{verbatim}
\usepackage{amsmath}
\usepackage{bm}
\usepackage{a4wide}
\usepackage[T1]{fontenc}
\usepackage{times}
\usepackage{amssymb,latexsym}
\usepackage{enumerate}
\usepackage{pict2e}

\makeatletter
\DeclareRobustCommand{\intprod}{%
  \mathbin{\mathpalette\int@prod{(0.1,0)(0.9,0)(0.9,0.8)}}%
}
\DeclareRobustCommand{\intprodr}{%
  \mathbin{\mathpalette\int@prod{(0.1,0.8)(0.1,0)(0.9,0)}}}

\newcommand{\int@prod}[2]{%
  \begingroup
  \sbox\z@{$\m@th#1+$}%
  \setlength\unitlength{\wd\z@}%
  \begin{picture}(1,1)
  \roundcap
  \polyline#2
  \end{picture}%
  \endgroup
}
\makeatother

\makeatletter
\newcommand{\sumprime}{\if@display\sideset{}{'}\sum%
            \else\sum'\fi}
\makeatother

\allowdisplaybreaks
\begin{document}

\numberwithin{equation}{section}

\newtheorem{theorem}{Theorem}[section]
\newtheorem{proposition}[theorem]{Proposition}
\newtheorem{conjecture}[theorem]{Conjecture}
\def\theconjecture{\unskip}
\newtheorem{corollary}[theorem]{Corollary}
\newtheorem{lemma}[theorem]{Lemma}
\newtheorem{observation}[theorem]{Observation}
\newtheorem{definition}{Definition}
\newtheorem*{definition*}{Definition}
\numberwithin{definition}{section} 
\newtheorem{remark}{Remark}
\newtheorem*{note}{Note}
\def\theremark{\unskip}
\newtheorem{kl}{Key Lemma}
\def\thekl{\unskip}
\newtheorem{question}{Question}
\def\thequestion{\unskip}
\newtheorem*{example}{Example}
\newtheorem{problem}{Problem}

\thanks{}

\title{Bergman functions on weakly uniformly perfect domains II}

 \author[Zhiyuan Zheng]{Zhiyuan Zheng}
\date{2026. 07. 16}



\address[Zhiyuan Zheng]{Department of Mathematics and Computer Sciences, Tongling University, Anhui, 244000, China}

\email{2023052@tlu.edu.cn}

\begin{abstract}

We study the boundary asymptotic behavior of Bergman functions on planar domains. Motivated by Chen's question on the equivalence between uniform perfectness of the boundary and the sharp growth rates of the Bergman kernel and the Bergman metric, we focus on the second part of the question concerning the Bergman metric. We prove that $\partial\Omega$ is uniformly perfect if and only if $K_{\Omega}^{(1)}(w)\asymp \delta_{\Omega}(w)^{-4}$. We also find that under suitable weak uniform perfectness conditions, there exist sequences of points along which $b_{\Omega}(w_n)=o(\delta_{\Omega}(w_n)^{-1})$, providing partial evidence toward an affirmative answer. Our method relies on sharp lower and upper bounds for $K_{\Omega}^{(1)}$ and $K_{\Omega}$. As an application, we obtain corresponding lower bounds for the Bergman distance on certain planar domains.

\bigskip
\noindent{{\sc Mathematics Subject Classification} (2020):30C40, 30C85,30F45}

\smallskip
\noindent{{\sc Keywords}: weakly uniform perfectness, Bergman functions}

\end{abstract}

\maketitle

\section{Introduction}

The boundary asymptotic behavior of Bergman functions is an important problem in complex analysis, and there is a substantial body of literature on this topic (see, e.g., \cite{Fefferman, BlockiPflug1998, Chen1999, Herbort1999, JarnickiPflug1989, JarnickiPflugZwonek, Ohsawa1984, Ohsawa1993, PflugZwonek2005, Zwonek1999}). In this paper, we are concerned with Bergman functions on certain planar domains. Let $A^2(\Omega)$ denote the Bergman space on a domain $\Omega \subset \mathbb{C}$. For $z, w \in \Omega$, let $K_{\Omega}(z,w)$ be the Bergman kernel function of $\Omega$, write $K_{\Omega}(z):=K_{\Omega}(z,z)$, and let $b_{\Omega}(z)^2 \mathrm{d}z \otimes \mathrm{d}\overline{z}$ denote the Bergman metric on $\Omega$.

We denote
$$
K^{(1)}_{\Omega}(z):= \sup \{  |f'(z)|^2: f\in A^2(\Omega), f(z)=0, \|f\|_{L^2(\Omega)}=1      \}.
$$
Then
$$
b_{\Omega}(z)^2=\frac{K^{(1)}_{\Omega}(z)}{K_{\Omega}(z)}.
$$
For two points $z_0,z \in \Omega$, we denote their Bergman distance by $d_{\Omega}(z_0,z)$.

In 2013, Chen \cite{Chen2013} linked the boundary behavior of Bergman functions on planar domains to the classical notion of uniform perfectness. Here, uniform perfectness is a significant concept in complex analysis which connects many branches of mathematics, including potential theory, fractal geometry, dynamical systems, and spectral theory (see, e.g., \cite{Fernandez,Gonzalez,Hinkkanen,Jarvi,Lithner,Rocha,Osgood,Pommerenke,Pommerenke1984,Sugawa1998,Sugawa2003,Tukia}). Chen proved that for a planar domain $\Omega \subset \mathbb{C}$, its boundary $\partial \Omega$ is uniformly perfect if and only if the following two estimates hold simultaneously:
$$
K_{\Omega}(w) \asymp \delta_{\Omega}(w)^{-2}, b_{\Omega}(w)\asymp \delta_{\Omega}(w)^{-1}, w \to \partial \Omega.
$$
Based on this, Chen raised the following question:
\begin{question}
Is it true that $\partial \Omega$ is uniformly perfect equivalent to $K_{\Omega}(w) \asymp \delta_{\Omega}(w)^{-2}$? And is it true that $\partial \Omega$ is uniformly perfect equivalent to $b_{\Omega}(w) \asymp \delta_{\Omega}(w)^{-1}$?
\end{question}
In \cite{XiongZheng} and \cite{Zheng2025}, we discussed the boundary behavior of the Bergman kernel and provided an answer to the first question. In this paper, we wish to further investigate the boundary asymptotic behavior of $b_{\Omega}(z)$ and $d_{\Omega}(z_0,z)$, and to make progress toward answering the second question. To this end, we first need to examine the relation between the boundary behavior of $K_{\Omega}^{(1)}(z)$ and the boundary of the domain.

Our first main result is the following.
\begin{theorem}\label{K1equiv}
A domain $\Omega \subset \mathbb{C}$ has uniformly perfect boundary $\partial \Omega$ if and only if
$$
K^{(1)}_{\Omega}(w) \asymp \delta_{\Omega}(w)^{-4},\,\,\,w \to \partial \Omega.
$$
\end{theorem}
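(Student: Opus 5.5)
The plan is to treat the two directions separately. The forward direction follows from Chen's theorem together with a trivial upper bound. The reverse direction is the real content: it needs a quantitative upper bound for $K^{(1)}_\Omega$ that detects thin boundary, in the spirit of the kernel estimates of \cite{XiongZheng, Zheng2025}.

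\emph{Uniform perfectness implies the estimate.} Suppose $\partial\Omega$ is uniformly perfect. By Chen's theorem \cite{Chen2013}, $K_\Omega(w)\asymp\delta_\Omega(w)^{-2}$ and $b_\Omega(w)\asymp\delta_\Omega(w)^{-1}$. Since $K^{(1)}_\Omega=b_\Omega^2K_\Omega$, this gives $K^{(1)}_\Omega(w)\asymp\delta_\Omega(w)^{-4}$. The upper bound $K^{(1)}_\Omega(w)\lesssim\delta_\Omega(w)^{-4}$ in fact holds on every domain. For an extremal $f$, apply the Cauchy estimate on $D(w,\delta_\Omega(w)/2)$ together with the mean value inequality; this gives $|f'(w)|^2\lesssim \delta_\Omega(w)^{-4}\|f\|^2$.

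\emph{The estimate implies uniform perfectness.} We argue by contraposition. Suppose $\partial\Omega$ is not uniformly perfect. Then there are annuli $A_n=\{r_n<|z-a_n|<R_n\}\subset\Omega$ with $a_n\in\partial\Omega$, $R_n/r_n\to\infty$, and each annulus separating $\partial\Omega$. We may pass to $\log(R_n/r_n)\to\infty$ and, where needed, normalize $R_n$ to be bounded. Choose $w_n$ on the geometric-mean circle $|w_n-a_n|=\sqrt{r_nR_n}$, so that $\delta_\Omega(w_n)\approx\sqrt{r_nR_n}$. The aim is to show $K^{(1)}_\Omega(w_n)\delta_\Omega(w_n)^4\to0$. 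Let $f\in A^2(\Omega)$ with $f(w_n)=0$ and $\|f\|=1$. On $A_n$, expand $f$ in a Laurent series $f=\sum_k c_k(z-a_n)^k$. The $L^2$ norm on the annulus is
\[
\sum_{k\ne-1}\frac{\pi|c_k|^2}{k+1}\bigl(R_n^{2k+2}-r_n^{2k+2}\bigr)+2\pi|c_{-1}|^2\log\frac{R_n}{r_n}\le1 .
\]
Each term $c_k(w_n-a_n)^{k-1}k$ in $f'(w_n)$ is then bounded using the ratio $(r_n/R_n)^{|k+1|/2}$. This gives a geometric decay factor for $|k+1|\ge1$ and a factor $(\log(R_n/r_n))^{-1/2}$ for $k=-1$.

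\emph{Main obstacle: the $k=0$ term.} The difficulty is that the $k=0$ term contributes nothing to $f'$. Summed over $|k+1|\ge1$, the bounds give only $|f'(w_n)|^2\lesssim\delta^{-4}(r_n/R_n)^{1/2}\cdot$const plus the $k=-1$ contribution. The terms $k=-2$ and $k=0$ need care, which is why the evaluation point is placed at the geometric mean. There $(k=1)$ gives $|c_1|^2R_n^4\lesssim 1$, so $|c_1|^2\lesssim R_n^{-4}\ll\delta^{-4}$. Similarly, $(k=-3)$ gives $|c_{-3}|^2r_n^{-4}\lesssim1$, so $|c_{-3}\delta^{-4}|\ll\delta^{-4}$. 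The term $(k=-2)$ gives $|c_{-2}|^2\lesssim r_n^2$, hence a contribution of $r_n^2\delta^{-6}\approx \delta^{-4}(r_n/R_n)$.

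With all contributions shown to be $o(\delta^{-4})$ uniformly in $f$, the vanishing condition $f(w_n)=0$ is not even needed. Consequently $K^{(1)}_\Omega(w_n)=o(\delta_\Omega(w_n)^{-4})$, which contradicts the assumed lower bound. I expect the real work to be precisely this uniform Laurent estimate at the geometric-mean point, in particular verifying that $\delta_\Omega(w_n)$ is comparable to $|w_n-a_n|$. That comparability holds because the annulus contains the disc $D(w_n,c\sqrt{r_nR_n})$ when $R_n/r_n$ is large.
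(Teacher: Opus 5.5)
Your argument is correct, and its overall architecture is the paper's. You negate uniform perfectness to get annuli $\{r_n<|z-a_n|<R_n\}\subset\Omega$ with $R_n/r_n\to\infty$, evaluate at $w_n$ on the geometric-mean circle where $\delta_{\Omega}(w_n)\asymp\sqrt{r_nR_n}$, and pass from $\Omega$ to the annulus by restriction.

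The two routes differ in the annulus estimate itself:
\begin{itemize}
\item The paper keeps the constraint $f(w_n)=0$ and uses the explicit Bergman kernel of the symmetric annulus $A_R$. It computes $b_{A_R}(1)^2$ from $\partial\bar\partial\log K_{A_R}$, using the inversion $z\mapsto 1/z$ and a cancellation in $f''(1)$. This gives $K^{(1)}_{A_R}(1)=O(R^{-2})$, hence $K^{(1)}_{\Omega}(w_n)\le O(r_n/R_n)\,\delta_{\Omega}(w_n)^{-4}$.
\item You drop the constraint and bound $\sup\{|f'(w_n)|:\|f\|\le 1\}$ directly from Parseval for the Laurent expansion. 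The $z^{-1}$ mode then caps you at $K^{(1)}_{\Omega}(w_n)\lesssim\delta_{\Omega}(w_n)^{-4}\bigl(r_n/R_n+1/\log(R_n/r_n)\bigr)$.
\end{itemize}

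Theorem \ref{K1equiv} only needs $o(\delta_{\Omega}^{-4})$, so your bound suffices. Your route is also more elementary and self-contained, since it needs no kernel asymptotics.

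What you give up is the rate. Logarithmic decay is useless for Theorem \ref{upperboundK1}. It is also useless for the proof of Theorem \ref{estimateb}, where the upper bound on $K^{(1)}_{\Omega}$ must beat the logarithmic loss in the lower bound for $K_{\Omega}$.

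You can recover the paper's rate inside your own framework. Using $f(w_n)=0$ to eliminate $c_{-1}$ gives $f'(w_n)=\sum_{k\neq -1}(k+1)c_k(w_n-a_n)^{k-1}$. Every remaining term, including $k=0$, now carries a factor $(r_n/R_n)^{|k+1|/2}$ relative to $\delta_{\Omega}(w_n)^{-2}$. Hence $|f'(w_n)|^2\lesssim\delta_{\Omega}(w_n)^{-4}\,r_n/R_n$.

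Two small slips in your write-up:
\begin{itemize}
\item Without the constraint, the $k=0$ term contributes nothing to $f'$, so it is not an obstacle.
\item Individual contributions to $|f'(w_n)|$ should be compared with $\delta_{\Omega}(w_n)^{-2}$, not $\delta_{\Omega}(w_n)^{-4}$. For example, $|c_{-3}|\,|w_n-a_n|^{-4}\lesssim\delta_{\Omega}(w_n)^{-2}\,r_n/R_n$.
\end{itemize}
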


In \cite{Zheng2025}, we also extended the answer to the first question to a certain extent. We considered a generalization of uniform perfectness: a closed set $E \subset \mathbb{C}$ is said to be $h$-uniformly perfect (or simply said to satisfy condition $(U)_h$) if there exists $r_0>0$ such that for every $a \in E$ and $r \in (0,r_0)$,
$$
E \cap \{ z\in \mathbb{C}: h(r)\le |z-a| \le r  \} \ne \emptyset,
$$
where $h$ is a monotonically increasing function on $(0,r_0)$ with $h(r)<r$. In particular, when $E$ is bounded and $h(r)=Cr$ for some $0<C<1$, then $E$ is uniformly perfect in the classical sense. For $\alpha>1$ and $\beta>0$, we write
$$
h_{1,\alpha}(t)=t^{\alpha},\,\,\, h_{2,\beta}(t)=t\left( \log \frac{1}{t} \right)^{-\beta}.
$$
If there exists a constant $C>0$ such that $E$ is $h$-uniformly perfect with $h=Ch_{1,\alpha}$ or $h=Ch_{2,\beta}$, then we say respectively that $E$ satisfies condition $(U)_{1,\alpha}$ or condition $(U)_{2,\beta}$.

Generally speaking, as long as the set $E$ is compact and has no isolated points, one can always find a monotonically increasing continuous function $h$ such that $E$ satisfies condition $(U)_h$, and for any continuous function $f$ satisfying $f(r)>h(r)$ for all $r \in (0,r_0)$, the set $\partial \Omega$ must fail to satisfy condition $(U)_f$; that is, in some sense $\partial \Omega$ "just" satisfies condition $(U)_h$. Moreover, we may also require that $\frac{h(t)}{t}$ is monotonically increasing, and converges to 0 as $t \to 0$ if $E$ is not uniformly perfect. We shall explain the above statements in Section 2.

In \cite{Zheng2025}, we discussed the relationship between these two types of conditions and lower bound estimates for $K_{\Omega}(w)$. For $K^{(1)}_{\Omega}(w)$, correspondingly, we have the following result.

\begin{theorem} \label{lowerboundK1}
Let $\Omega \subset \mathbb{C}$ be a domain. 

$(1)$ If $\partial \Omega$ satisfies condition $(U)_{1,\alpha}$ for some $\alpha>1$, then as $w\to \partial \Omega$,
$$K^{(1)}_{\Omega}(w) \gtrsim \delta_{\Omega}(w)^{ -2-\frac{2}{\alpha}} \left( \log \frac{1}{\delta_{\Omega}(w)} \right)^{-1};$$

$(2)$ If $\partial \Omega$ satisfies condition $(U)_{2,\beta}$ for some $\beta>0$, then as $w\to \partial \Omega$,
$$K^{(1)}_{\Omega}(w) \gtrsim {\delta_{\Omega}(w)^{-4}\left(\log \frac{1}{\delta_{\Omega}(w)}\right)^{-2\beta}\left( \log \log \frac{1}{\delta_{\Omega}(w)}\right)^{-1}}.$$

The constants implicit in the above ``$\gtrsim$'' depend only on $\Omega$.
\end{theorem}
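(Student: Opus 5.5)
Fix $w$ near $\partial\Omega$ and let $a\in\partial\Omega$ with $|w-a|=\delta:=\delta_\Omega(w)$. To bound $K^{(1)}_\Omega(w)$ from below, I will construct a holomorphic $f\in A^2(\Omega)$ with $f(w)=0$, $|f'(w)|$ large and $\|f\|$ controlled, and use
$$K^{(1)}_\Omega(w)\ge \frac{|f'(w)|^2}{\|f\|^2_{L^2(\Omega)}}.$$
The standard approach, as in Chen's work and in \cite{Zheng2025} for $K_\Omega$, is Hörmander/Donnelly--Fefferman $L^2$-estimates for $\bar\partial$ with a weight built from the Green function or a logarithmic capacity potential of a compact piece of the boundary. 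First, I will use condition $(U)_h$ iteratively to produce a compact set $E\subset\partial\Omega\cap\{|z-a|\le r\}$ whose capacity is bounded below. Taking radii $r_0> r_1=h(r_0)>r_2=h(r_1)>\dots$, the condition gives boundary points in each annulus $h(r_k)\le|z-a|\le r_k$. Equivalently, one can use the annuli around $w$ at scales from $\delta$ up to a fixed size. For $h=Ch_{1,\alpha}$ the scales go $r\mapsto r^\alpha$, so from scale $\delta$ only about $\log\log(1/\delta)$ steps are needed. For $h=Ch_{2,\beta}$ each step shrinks by a factor $(\log 1/r)^{-\beta}$. The known results in \cite{Zheng2025} giving lower bounds for $K_\Omega$ under $(U)_{1,\alpha}$, $(U)_{2,\beta}$ come precisely from such a capacity/Green function estimate. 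I will assume those intermediate potential-theoretic bounds, in particular an estimate of the Green function $g_\Omega(\cdot,w)$ on $\{|z-w|\le \delta'\}$ for a suitable comparison scale $\delta'$.

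Second, I construct $f$. Take a cutoff $\chi$ equal to $1$ near $w$ and supported in $\{|z-w|<\delta/2\}$, and set $v=\bar\partial\chi\cdot (z-w)$. Solve $\bar\partial u=v$ with weight $\varphi=2g_\Omega(\cdot,w)+\psi$, chosen to force $u(w)=u'(w)=0$; the factor $2$ forces vanishing to order $2$. Then $f=\chi(z-w)-u$ satisfies $f(w)=0$ and $f'(w)=1$. The $L^2$ estimate gives $\|f\|^2\lesssim \delta^4\cdot(\text{loss factor})$, where the loss factor comes from $e^{-\varphi}$ on the annulus $\{\delta/4<|z-w|<\delta/2\}$. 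There $-g_\Omega(\cdot,w)$ is bounded only by a quantity reflecting the weak perfectness. This yields $K^{(1)}_\Omega(w)\gtrsim \delta^{-4}/\Lambda(\delta)$.

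An alternative that avoids redoing the $\bar\partial$ step is to use the general inequality $K^{(1)}_\Omega(w)\gtrsim K_\Omega(w)\,\delta^{-2}\cdot$(correction), i.e. $b_\Omega\gtrsim$ something. I prefer the direct construction. The loss factors should match the statement:
\begin{itemize}
\item $\Lambda\asymp\delta^{2-2/\alpha}\log(1/\delta)$ in case (1), because only boundary points at distance $\gtrsim\delta^{1/\alpha}$ are guaranteed. Effectively the domain looks like a punctured disc up to scale $\delta^{1/\alpha}$, giving $\delta^{-2}\cdot\delta^{-2/\alpha}$, with the logarithm from the capacity of the nested structure.
\item $\Lambda\asymp(\log 1/\delta)^{2\beta}\log\log(1/\delta)$ in case (2).
\end{itemize}
Concretely, the guaranteed boundary point nearest at the relevant scale sits at distance $R=h^{-1}(\delta)$. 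I will test with $f$ roughly $(z-w)\cdot$(something normalized at scale $R$), with logarithmic corrections from the $L^2$ weight.

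The main obstacle is the Green function/capacity estimate that converts $(U)_h$ into a lower bound for $-g$, i.e. an upper bound on the weight loss. It must be sharp enough to produce exactly one factor $\log(1/\delta)$ in case (1) and one $\log\log(1/\delta)$ in case (2). I would import the analogous estimate from \cite{Zheng2025} used for $K_\Omega$, and check that the second-order vanishing (weight $2g$ rather than $g$) changes the exponents only through the $\delta^{-2}$ versus $R^{-2}$ factors, not through extra logarithms.
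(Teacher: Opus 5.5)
There is a genuine gap. Everything rests on the weight-loss estimate you defer, and the mechanism you chose cannot produce the stated exponent, at least in case $(1)$.

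First, three slips.
\begin{itemize}
\item To force $u(w)=u'(w)=0$ the weight must be $4g_\Omega(\cdot,w)$, not $2g_\Omega(\cdot,w)$.
\item Your $\Lambda$ in case $(1)$ should be $\delta^{-(2-2/\alpha)}\log\frac1\delta$. With $\Lambda\asymp\delta^{2-2/\alpha}\log\frac1\delta$ you would get $K^{(1)}_\Omega(w)\gtrsim\delta^{-6+2/\alpha}$, which contradicts $K^{(1)}_\Omega(w)\le K^{(1)}_{D(w,\delta)}(w)=2/(\pi\delta^4)$.
\item What you need is an \emph{upper} bound for $-g_\Omega$, not a lower bound.
\end{itemize}

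Now the real obstruction. With $\bar\partial\chi$ supported in $\{\delta/4\le|z-w|\le\delta/2\}$, any H\"ormander or Donnelly--Fefferman estimate bounds $\|f\|^2$ by something at least of order $\delta^4\exp\bigl(4\min_{\delta/4\le|z-w|\le\delta/2}(-g_\Omega(z,w))\bigr)$. Since $g_\Omega(\cdot,w)$ is the largest negative subharmonic function with a logarithmic pole at $w$, changing the weight does not help. To reach the theorem in case $(1)$ you would need $e^{-4g_\Omega}\lesssim(\rho/\delta)^2\log\frac1\delta$ on that annulus, with $\rho\approx h^{-1}(\delta)$. Condition $(U)_h$ does not give this.

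Let $a$ be the nearest boundary point and $Q=\overline{D(a,\rho/2)}\setminus\Omega$. Compare with $D(a,\rho/2)\setminus Q\subset\Omega$, and bound the harmonic measure of $Q$ by its normalized equilibrium potential. This gives $-g_\Omega(z,w)\ge L(1-L/M)-O(1)$ on the annulus, where $L=\log\frac{\rho}{\delta}$ and $M=\log\frac{\rho}{\mathrm{Cap}(Q)}$.

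Here is a concrete configuration where this is fatal. Let $\Omega=D(0,2)\setminus E$, where $E$ is a symmetric Cantor set whose level-$k$ pieces have diameter $d_k$, with $d_{k+1}=c\,d_k^{\alpha}$ and $1<\alpha<2$.
\begin{itemize}
\item $\partial\Omega$ satisfies $(U)_{1,\alpha}$.
\item Put $w$ between two sibling pieces, at distance $\delta=d_{k+1}$ from one of them, and take $\rho=d_k\asymp\delta^{1/\alpha}$.
\item The standard energy estimate for Cantor sets gives $\mathrm{Cap}(Q)\approx\delta^{1/(2-\alpha)}$, so $L/M\to\frac{2-\alpha}{2}$.
\item Hence $-g_\Omega\ge\frac{\alpha-1}{2}\log\frac1\delta-O(1)$ on the annulus.
\end{itemize}
The best your method can give at such $w$ is then $K^{(1)}_\Omega(w)\gtrsim\delta^{2\alpha-6}$. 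This is smaller than the asserted $\delta^{-2-2/\alpha}(\log\frac1\delta)^{-1}$ by the factor $\delta^{2(\alpha-1)^2/\alpha}\log\frac1\delta\to0$. The obstruction is structural: a thin piece of $\Omega^c$ at distance $\delta$ is almost invisible to the Green function with pole at $w$. An $L^2$ holomorphic function, by contrast, can have a pole on it at only logarithmic cost in its capacity.

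What is missing is a test function with exactly such a pole; the paper uses no $\bar\partial$ at all. The construction is as follows.
\begin{itemize}
\item Take $a_1\in\partial\Omega$ with $|w-a_1|=\delta$, and set $r=h^{-1}(\delta)$.
\item By $(U)_h$, take $a_2\in\partial\Omega$ with $\delta\le|a_2-a_1|\le r$.
\item Take $a_3\in\partial\Omega$ with $2r\le|a_3-a_1|\le R:=h^{-1}(2r)$.
\item Let $f_i$ be the Cauchy transform of the equilibrium measure of $E_i=\overline{D(a_i,\delta/n)}\setminus\Omega$.
\item Set $f=f_1-sf_2-(1-s)f_3$, with $s$ chosen so that $f(w)=0$; then $|s|\asymp|w-a_2|/\delta$.
\end{itemize}
The model, for $s=\frac{w-a_2}{w-a_1}$, is $\frac{1}{z-a_1}-\frac{s}{z-a_2}=\frac{(a_2-a_1)(z-w)}{(w-a_1)(z-a_1)(z-a_2)}$. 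The $a_3$-term makes the coefficients sum to zero, so $f=O(|z|^{-2})$ at infinity (Lemma~\ref{lemma5.3}).

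The asymptotics of Lemma~\ref{lemma5.2} give $|f'(w)|\gtrsim\frac{|a_2-a_1|}{\delta^2|w-a_2|}$. Lemma~\ref{lemma5.1} gives $\|f\|^2\lesssim(1+|s|^2)\log\frac{8R}{\min_i\mathrm{Cap}(E_i)}$. This is Proposition~\ref{prop5.4}; since $|a_2-a_1|\asymp|w-a_2|\lesssim r$, it gives $K^{(1)}_\Omega(w)\gtrsim\bigl(\delta^2r^2\log\frac{8R}{\min_i\mathrm{Cap}(E_i)}\bigr)^{-1}$.

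The capacities enter only through this logarithm. Theorem~\ref{thm2.6} ($(U)_{1,\alpha}\Rightarrow(C)_{1,\alpha/(2-\alpha)}$ and $(U)_{2,\beta}\Rightarrow(C)_{2,2\beta}$) bounds it by $\log\frac1\delta$, respectively $\log\log\frac1\delta$. With $r\asymp\delta^{1/\alpha}$, respectively $r\lesssim\delta(\log\frac1\delta)^\beta$, both estimates follow.

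Finally, the lower bounds for $K_\Omega$ you intended to import (Proposition~\ref{prop6.1}) are proved with the same Cauchy transforms, not with Green function estimates.
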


On the other hand, if $\partial \Omega$ does not satisfy condition $(U)_h$, then at certain points one obtains an upper estimate for $K^{(1)}_{\Omega}(w)$:

\begin{theorem}\label{upperboundK1}
Let $\Omega \subset \mathbb{C}$ be a domain, and suppose there exists a sequence of annuli 
$$A_n=\{z \in \mathbb{C}: h(r_n)<|z-a_n|<r_n\} \subset \Omega,$$ 
where $a_n \in \partial \Omega$, $r_n>0$, and $r_n \to 0$ as $n \to \infty$.

$(1)$ If $h(t)=Ch_{1,\alpha}(t)$ with $\alpha>1$, then there exist a numerical constant $C_0$ and a sequence of points $w_n \to \partial \Omega$ such that
$$
K^{(1)}_{\Omega}(w_n)\le C_0\cdot C^{\frac{2}{\alpha+1}} \cdot \delta_{\Omega}(w_n)^{-2-\frac{4}{\alpha+1}}.
$$

$(2)$ If $h(t)=Ch_{2,\beta}(t)$ with $\beta>0$, then there exist a numerical constant $C_0$ and a sequence of points $w_n \to \partial \Omega$ such that
$$
K^{(1)}_{\Omega}(w_n)\le C_0\cdot C \cdot{\delta_{\Omega}(w_n)^{-4}\left( \log \frac{1}{\delta_{\Omega}(w_n)}\right)^{-\beta}}.
$$
\end{theorem}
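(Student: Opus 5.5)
The plan is to reduce to an explicit computation on the annulus by monotonicity of $K^{(1)}$ under inclusion of domains, and then to place $w_n$ at the geometric mean radius.

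\emph{Monotonicity and choice of points.} If $A\subset\Omega$ and $w\in A$, then every $f\in A^2(\Omega)$ with $f(w)=0$ and $\|f\|_{L^2(\Omega)}=1$ restricts to a competitor on $A$ with norm at most $1$. Hence $K^{(1)}_\Omega(w)\le K^{(1)}_{A}(w)$. Write $\rho_n=h(r_n)$ and choose $w_n$ with $|w_n-a_n|=\sqrt{\rho_n r_n}$. Since $a_n\in\partial\Omega$, we get $\delta_\Omega(w_n)\le\sqrt{\rho_n r_n}$. Since $\rho_n/r_n\to 0$ in both cases, the disc of radius $\tfrac12\sqrt{\rho_n r_n}$ about $w_n$ lies in $A_n$, so also $\delta_\Omega(w_n)\ge\tfrac12\sqrt{\rho_n r_n}$. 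Thus $\delta_\Omega(w_n)^2\asymp\rho_n r_n$ with numerical constants.

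\emph{Reduction to a normalized annulus.} After translating $a_n$ to $0$ and scaling by $r_n$ (under which $K^{(1)}$ scales like $r_n^{-4}$), it suffices to bound $K^{(1)}_{A_\rho}$ at a point $|w|=\sqrt\rho$ of $A_\rho=\{\rho<|z|<1\}$. The target is
$$K^{(1)}_{A_\rho}(w)\le C_0\,\rho^{-1},\qquad \rho\ \text{small}.$$
Rescaled, this reads $K^{(1)}_{A_n}(w_n)\lesssim r_n^{-3}\rho_n^{-1}=\delta^{-4}(\rho_n/r_n)$, where $\delta=\delta_\Omega(w_n)$. Given this, the two cases are bookkeeping.

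In case (2), $\rho_n/r_n=C(\log\frac1{r_n})^{-\beta}$ and $\log\frac1\delta\asymp\log\frac1{r_n}$. This gives $C_0C\,\delta^{-4}(\log\frac1\delta)^{-\beta}$.

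In case (1), $\delta^2\asymp Cr_n^{\alpha+1}$, so $r_n\asymp(\delta^2/C)^{1/(\alpha+1)}$. Then
$$\delta^{-4}\cdot C r_n^{\alpha-1}\asymp C^{2/(\alpha+1)}\,\delta^{-4+\frac{2(\alpha-1)}{\alpha+1}}=C^{2/(\alpha+1)}\,\delta^{-2-\frac4{\alpha+1}}.$$

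\emph{Computation on $A_\rho$.} I will use the Laurent orthogonal basis $\{z^k\}_{k\in\mathbb Z}$, with
$$\|z^k\|^2=\frac{\pi(1-\rho^{2k+2})}{k+1}\ (k\ne-1),\qquad \|z^{-1}\|^2=2\pi\log\tfrac1\rho.$$
The extremal problem is: minimize $\|f\|^2$ over $f=\sum c_kz^k$ subject to $f(w)=0$ and $f'(w)=1$; then $K^{(1)}(w)$ is the reciprocal of this minimum. Equivalently,
$$K^{(1)}=\partial\bar\partial K-\frac{|\partial K|^2}{K}.$$

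Most contributions are harmless. The terms with $k\ge0$ contribute $O(1)$ to $\sum|\varphi_k'(w)|^2$. The terms $k=-m$ with $m\ge2$ contribute about $m^3\rho^{m-3}$, which is $O(\rho^{-1})$ and dominated by $m=2$.

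\emph{Main obstacle.} The trouble is the $k=-1$ term, which naively contributes $\rho^{-2}/\log\frac1\rho$, larger than $\rho^{-1}$. I expect it to be neutralized by the vanishing constraint $f(w)=0$, so that it drops out of $\partial\bar\partial K-|\partial K|^2/K$.

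To see this concretely, I will bound $K^{(1)}$ from above by bounding the minimal norm from below. For any $f$ with $f(w)=0$ and $f'(w)=1$, I write out its $c_{-1}z^{-1}$ and $c_0$ components. If $c_{-1}$ is large, matching $f(w)=0$ forces either a large $c_0$, costing $\pi|c_0|^2\gtrsim|c_{-1}|^2/\rho$, or large higher coefficients. If I cannot make this dichotomy clean, the fallback is direct computation: solve the $2\times2$ Gram problem in the span of $\{1,z^{-1}\}$ together with the remaining modes. Since $|w|^{-1}=\rho^{-1/2}$, this shows the $z^{-1}$ direction costs at least $\rho^{-1}\cdot\rho^{2}$ in norm per unit of derivative, i.e. contributes $O(\rho^{-1})$ to $K^{(1)}$.

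Making this cancellation rigorous, uniformly in small $\rho$, is the step I expect to require the most care. Everything else is routine summation of geometric-type series.
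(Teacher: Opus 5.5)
Your reduction is the same as the paper's proof. Monotonicity of $K^{(1)}$ under inclusion, the point at radius $\sqrt{r_nh(r_n)}$ with $\delta_\Omega(w_n)^2\asymp r_nh(r_n)$, the $|T'|^4$ transformation rule, and the bookkeeping in both cases are all correct. Your target $K^{(1)}_{A_\rho}(\sqrt\rho)\lesssim\rho^{-1}$ is precisely the paper's estimate (3.6), $K^{(1)}_{A_R}(1)=O(R^{-2})$, after setting $\rho=R^{-2}$.

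\textbf{The gap.} You never prove that estimate, and it is the only non-routine step. The paper obtains it as follows:
\begin{itemize}
\item it writes $b_{A_R}(1)^2=\frac14\bigl(f''(1)+f'(1)\bigr)$ for the radial function $f(r)=\log K_{A_R}(r)$;
\item it uses the inversion $z\mapsto 1/z$ to get $f'(1)=-2$;
\item it uses the explicit series for $K,K',K''$ to get $f''(1)=2+O(\log R/R^2)$;
\item it concludes via $K^{(1)}=b^2K$ with $K(1)\approx(4\pi\log R)^{-1}$.
\end{itemize}
Your proposal stops at a qualitative dichotomy and a Gram computation that is never carried out. The clause ``or large higher coefficients'' is exactly where a quantitative bound is needed. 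A priori, the modes other than $z^{-1}$ might cancel the value $c_{-1}/w$ cheaply, while $z^{-1}$ supplies the derivative at a cost of only $2\pi\rho^2\log\frac1\rho\ll\rho$.

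\textbf{How to close it.} The dichotomy does close in a few lines, using the sums you already computed.
\begin{itemize}
\item Take $f$ with $f(w)=0$ and $f'(w)=1$, and write $f=c_{-1}z^{-1}+g$ with $g\perp z^{-1}$. Then $\|f\|^2\ge\|g\|^2$, $g(w)=-c_{-1}/w$, and $g'(w)=1+c_{-1}w^{-2}$.
\item Expanding $g$ in $\{\varphi_k\}_{k\ne-1}$ and applying Cauchy--Schwarz gives $|g(w)|^2\le\|g\|^2\sum_{k\ne-1}|\varphi_k(w)|^2$ and $|g'(w)|^2\le\|g\|^2\sum_{k\ne-1}|\varphi_k'(w)|^2$.
\item At $|w|=\sqrt\rho$ these two sums are $O(1)$ and $O(\rho^{-1})$ respectively, uniformly for $\rho\le\frac14$.
\item If $|c_{-1}|\ge\rho/2$, then $|g(w)|^2=|c_{-1}|^2/\rho\ge\rho/4$, which forces $\|g\|^2\gtrsim\rho$.
\item If $|c_{-1}|<\rho/2$, then $|c_{-1}w^{-2}|<\frac12$, so $|g'(w)|>\frac12$, which again forces $\|g\|^2\gtrsim\rho$.
\end{itemize}
Hence $\min\{\|f\|^2: f(w)=0,\ f'(w)=1\}\gtrsim\rho$, that is, $K^{(1)}_{A_\rho}(w)\lesssim\rho^{-1}$.

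Completed this way, your route is arguably more robust than the paper's. It never has to read off $b^2$ from the difference $f''(1)-2$ of two quantities that agree to leading order. The paper's computation, in return, also yields the bound on $b_{A_R}(1)^2$ itself.
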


This shows that lower bounds on the boundary behavior of $K^{(1)}_{\Omega}(w)$ can conversely imply what kind of weak uniform perfectness the boundary satisfies:

\begin{corollary}\label{corollary}
$(1)$ If
$$
K^{(1)}_{\Omega}(w) \gtrsim \delta_{\Omega}(w)^{-2-\frac{4}{\alpha+1}},\,\,\,w \to \partial \Omega,
$$
then $\partial \Omega$ must satisfy condition $(U)_{1,\alpha}$.

$(2)$ If
$$
K^{(1)}_{\Omega}(w) \gtrsim {\delta_{\Omega}(w)^{-4}\left( \log \frac{1}{\delta_{\Omega}(w)}\right)^{-\beta}},\,\,\,w \to \partial \Omega,
$$
then $\partial \Omega$ must satisfy condition $(U)_{2,\beta}$.
\end{corollary}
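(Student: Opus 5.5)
We prove Corollary \ref{corollary} by contraposition from Theorem \ref{upperboundK1}; no new estimates are needed. We treat part (1) in detail; part (2) is identical with $h_{2,\beta}$ in place of $h_{1,\alpha}$.

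Suppose $\partial\Omega$ does not satisfy $(U)_{1,\alpha}$. Then for every constant $C>0$ and every $r_0>0$, the condition $(U)_h$ with $h=Ch_{1,\alpha}$ fails. Concretely, there are $a\in\partial\Omega$ and $r\in(0,r_0)$ such that the closed annulus $\{Cr^\alpha\le|z-a|\le r\}$ misses $\partial\Omega$. The first step is to turn this into annuli lying in $\Omega$.

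\begin{itemize}
\item Since $a\in\partial\Omega$, the open annulus $\{Cr^\alpha<|z-a|<r\}$ is connected and disjoint from $\partial\Omega$, so it lies either entirely in $\Omega$ or entirely in $\mathbb{C}\setminus\overline\Omega$.
\item To rule out the second case, shrink slightly. Apply the failure of the condition with a smaller constant $C'<C$ (for instance $C'=C/2$), whose annulus contains the original one. Every point near $a$ lies within $Cr^\alpha$ of $a$, and since $a\in\partial\Omega$ there are points of $\Omega$ arbitrarily close to $a$. The delicate point is to guarantee that $\Omega$ actually meets the annulus.
\item The cleanest way is to make the annulus surround a point of $\Omega$ and to use the points of $\Omega$ near $a$. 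We would argue as follows. Since $\Omega$ is a domain not contained in the small disc $\{|z-a|<Cr^\alpha\}$ (as $r\to0$), pick $z_0\in\Omega$ with $|z_0-a|<Cr^\alpha$. Connect $z_0$ to a fixed far point of $\Omega$ by a path in $\Omega$. This path crosses the annulus, so $\Omega$ meets it, and the whole annulus lies in $\Omega$.
\end{itemize}

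Next, letting $r_0=1/n$ produces a sequence of such annuli
$$A_n=\{h(r_n)<|z-a_n|<r_n\}\subset\Omega,\qquad h=Ch_{1,\alpha},\qquad r_n\to0.$$
This holds for every $C>0$, so Theorem \ref{upperboundK1}(1) applies for each $C$. It gives points $w_n\to\partial\Omega$ with
$$K^{(1)}_\Omega(w_n)\le C_0\,C^{2/(\alpha+1)}\,\delta_\Omega(w_n)^{-2-4/(\alpha+1)},$$
where $C_0$ is a numerical constant.

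Now compare with the hypothesis $K^{(1)}_\Omega\ge c\,\delta_\Omega^{-2-4/(\alpha+1)}$ near $\partial\Omega$, with $c>0$ fixed. Choose $C$ small enough that $C_0C^{2/(\alpha+1)}<c$. Then at the points $w_n$, for $n$ large, the upper bound is strictly below the assumed lower bound, which is a contradiction. Hence $\partial\Omega$ satisfies $(U)_{1,\alpha}$.

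For part (2), Theorem \ref{upperboundK1}(2) gives the upper bound $C_0C\,\delta_\Omega(w_n)^{-4}(\log 1/\delta_\Omega(w_n))^{-\beta}$, which depends linearly on $C$. Choosing $C<c/C_0$ gives the contradiction in the same way.

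The main obstacle is the topological step: producing open annuli contained in $\Omega$, rather than merely disjoint from $\partial\Omega$, with the center $a_n$ on $\partial\Omega$. This is needed to match the hypotheses of Theorem \ref{upperboundK1} exactly. We may also need to replace $r_n$ by a slightly smaller radius so that $h(r_n)<r_n$ holds for small $r_n$. Both adjustments only change $C$ by a harmless factor.
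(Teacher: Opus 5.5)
Your proposal is correct and follows the paper's proof essentially verbatim: you negate $(U)_{1,\alpha}$ (resp.\ $(U)_{2,\beta}$) for every $C$ to get annuli $A_n\subset\Omega$ centred on $\partial\Omega$ with $r_n\to 0$, apply Theorem \ref{upperboundK1}, and let $C\to 0$ against the fixed lower-bound constant. Your only addition is the connectivity argument showing the annuli lie in $\Omega$ (a path in $\Omega$ from a point near $a$ to a fixed $z_1\in\Omega$, valid once $r<\delta_\Omega(z_1)$), which the paper just asserts for small $r$; your second bullet about shrinking $C$ is unnecessary.
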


If the boundary of the domain satisfies condition $(U)_{1,\alpha_1}$ but does not satisfy condition $(U)_{1,\alpha_2}$, where $1<\alpha_2 \le \alpha_1<2$, then by Theorem 1.4 of \cite{XiongZheng} we have
$$
K_{\Omega}(w) \gtrsim \delta_{\Omega}(w)^{-2}\left(  \log \frac{1}{\delta_{\Omega}(w)}  \right)^{-1}, 
$$
and combining this with Theorem \ref{upperboundK1} yields the existence of a sequence $w_n \to \partial \Omega$ such that
$$
b_{\Omega}(w_n)^2 \lesssim  \delta_{\Omega}(w_n)^{-2}\cdot \left( \delta_{\Omega}(w_n)^{\frac{2\alpha-2}{\alpha+1}}\log  \frac{1}{\delta_{\Omega}(w_n)}   \right)=o(\delta_{\Omega}(w_n)^{-2}).
$$
For the case where the boundary satisfies $(U)_{2,\beta_1}$ but not $(U)_{2,\beta_2}$, similarly there exists a sequence $w_n$ such that $b_{\Omega}(w_n)^2=o(\delta_{\Omega}(w_n)^{-2})$. This phenomenon seems to suggest that the answer to the second part of Chen's question is likely affirmative. Although we are not yet able to answer this question, we can obtain the following partial result:

\begin{theorem}\label{estimateb}
Let $\Omega$ be a bounded domain in $\mathbb{C}$ with weakly uniformly perfect boundary. We choose some functions $h_1(t), h_2(t)$, such that $\partial \Omega$ satisfies condition $(U)_{h_1}$ but does not satisfy condition $(U)_{h_2}$.  Furthermore, we require that $\frac{h_i(t)}{t} \ (i=1,2)$ be continuously increasing, satisfying
\begin{equation}\label{1.1}
\log \frac{h_1(t)}{t} \asymp \log  \frac{h_2(t)}{t} \to -\infty, \ \ \ t \to 0.
\end{equation}
If there exists a constant $0<C<2$ such that
\begin{equation}\label{1.2}
C\log  \frac{\tilde{h}_1(t)}{t} \le  \log  \frac{\tilde{h}_1\circ \tilde{h}_1(t)}{\tilde{h}_1(t)},
\end{equation}
where $\tilde{h}(t)=\frac{1}{5}h(t)$, then there must exist a sequence of points $w_n \to \partial \Omega$ in $\Omega$ such that
$$
b_{\Omega}(w_n) \ll \delta_{\Omega}(w_n)^{-1}.
$$
\end{theorem}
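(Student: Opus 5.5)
Since $b_\Omega(w)^2=K^{(1)}_\Omega(w)/K_\Omega(w)$, the goal is to find points $w_n\to\partial\Omega$ where $K^{(1)}_\Omega$ is small compared with $K_\Omega\,\delta_\Omega^{-2}$. I would combine two estimates at a common sequence of points: an upper bound for $K^{(1)}_\Omega$ coming from the failure of $(U)_{h_2}$, in the spirit of Theorem \ref{upperboundK1}, and a lower bound for $K_\Omega$ coming from $(U)_{h_1}$, in the spirit of the lower bounds of \cite{XiongZheng, Zheng2025} for general $h$. The aim is to show
$$
K^{(1)}_\Omega(w_n)\,\delta_\Omega(w_n)^{2}=o\bigl(K_\Omega(w_n)\bigr),
$$
which is exactly $b_\Omega(w_n)\ll\delta_\Omega(w_n)^{-1}$.

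\emph{Step 1 (upper bound).} Since $(U)_{h_2}$ fails, there are $a_n\in\partial\Omega$ and $r_n\to0$ with the annuli $A_n=\{h_2(r_n)<|z-a_n|<r_n\}\subset\Omega$. I would redo the argument of Theorem \ref{upperboundK1} with general $h_2$. Take $w_n$ on a circle $|w_n-a_n|=\rho_n$, with $\rho_n$ chosen near the geometric mean $\sqrt{h_2(r_n)r_n}$ so that $\delta_\Omega(w_n)\asymp\rho_n$. For $f\in A^2(\Omega)$, expand $f$ in its Laurent series on $A_n$. The positive-power part is controlled by the $L^2$ norm on the outer portion, and the negative-power part by the ratio $h_2(r_n)/\rho_n$. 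This should give
$$
K^{(1)}_\Omega(w_n)\lesssim \rho_n^{-4}\Bigl(\frac{h_2(r_n)}{\rho_n}\Bigr)^{\gamma}+\rho_n^{-2}r_n^{-2}
$$
for some $\gamma>0$. Here the zero condition $f(w_n)=0$ removes the constant term, so $K^{(1)}$ only gains from modes $|k|\ge1$. Consequently
$$
K^{(1)}_\Omega(w_n)\,\delta^2\lesssim \delta^{-2}\varepsilon_n,\qquad \varepsilon_n\approx \Bigl(\frac{h_2(r_n)}{r_n}\Bigr)^{c}\to0 .
$$

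\emph{Step 2 (lower bound).} I need $K_\Omega(w_n)\gtrsim\delta^{-2}\,\eta(\delta)$ with $\eta$ decaying slower than $\varepsilon_n$. Following \cite{XiongZheng, Zheng2025}, $K_\Omega(w)\gtrsim$ (capacity or Green-function data)$/\delta^2$, using iterated annuli $\tilde h_1^{(k)}(t)$, all of which meet $\partial\Omega$ by $(U)_{h_1}$ (the factor $\tfrac15$ gives room for disc/annulus comparisons). The logarithmic capacity of $\partial\Omega\cap D(a,\delta)$ is bounded below through a Pommerenke-type chain of points. This yields a bound of the form
$$
K_\Omega(w)\gtrsim \delta^{-2}\Bigl(\log\frac{\delta}{\tilde h_1(\delta)}\Bigr)^{-1}.
$$
Condition \eqref{1.2} is used here. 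It says $\log\frac{\tilde h_1\circ\tilde h_1}{\tilde h_1}\ge C\log\frac{\tilde h_1}{t}$, i.e.\ the successive scale ratios do not degenerate too fast. This lets the series of scales in the capacity / Green function estimate be summed geometrically, with $C<2$ keeping the series comparable to its first term, rather than producing an iterated-logarithm loss.

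\emph{Step 3 (comparison).} By \eqref{1.1}, $\log(h_1/t)\asymp\log(h_2/t)=:-L\to-\infty$. Step 1 then gives a factor $e^{-cL}$, while Step 2 loses only a factor $L^{-1}$. Hence
$$
b^2\delta^2\lesssim L\,e^{-cL}\to0 .
$$
The main obstacle is Step 2: obtaining a lower bound for $K_\Omega$ that loses only polynomially in $\log(t/h_1(t))$ for general $h_1$, and verifying that \eqref{1.2} is precisely what makes the multiscale capacity sum converge. I would try first a direct Green-function bound via Carleson's capacity estimate on the nested annuli.
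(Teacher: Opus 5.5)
Your plan is essentially the paper's proof: with $s_i(t)=h_i(t)/t$, the paper places $w_n$ at $|w_n-a_n|^2=r_nh_2(r_n)$, gets $K^{(1)}_\Omega(w_n)\lesssim\delta_\Omega(w_n)^{-4}s_2(r_n)$ from $K^{(1)}_\Omega\le K^{(1)}_{A_n}$ and the exact annulus value $K^{(1)}_{A_R}(1)=O(R^{-2})$ in (3.6) (your $\gamma=2$, $c=1$), and bounds $K_\Omega(w_n)$ from below by Proposition 6.1 combined with the capacity bound of Theorem 2.6(2), where (1.2) dominates $\sum_{k\ge1}2^{1-k}\log\frac{1}{s_1(\tilde h_1^{\circ k}(r_n))}$ by a geometric series of ratio $C/2$, so that $b_\Omega(w_n)^2\delta_\Omega(w_n)^2\lesssim s_2(r_n)\log\frac{1}{s_2(r_n)}\to0$.

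Two points need tightening. In Step 1, the coefficient $c_{-1}$ of $(z-a_n)^{-1}$ is bounded by $\|f\|$ only up to the factor $\bigl(2\pi\log\frac{r_n}{h_2(r_n)}\bigr)^{-1/2}$, not by a power of $h_2(r_n)/\rho_n$; this mode is suppressed only because $f(w_n)=0$ ties $c_{-1}$ to the constant term, whose $L^2(A_n)$-norm carries a factor of order $r_n$. That coupling is the true source of your $\rho_n^{-2}r_n^{-2}$ term, and without it the gain would be merely $\bigl(\log\frac{r_n}{h_2(r_n)}\bigr)^{-1}$, exactly cancelled by the logarithmic loss of Step 2. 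In Step 3, the gain is measured at scale $r_n$ while the loss is measured at scale $\delta_\Omega(w_n)\asymp\sqrt{r_nh_2(r_n)}$; the paper reconciles these by monotonicity ($g_E(\delta_\Omega(w_n))\ge g_E(2\tilde h_1(r_n))$ and $h_1^{-1}(8\delta_\Omega(w_n))\le\tilde h_1^{-1}(r_n)$), so that everything is expressed at scale $r_n$.
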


As mentioned earlier, we can always find a function $h$ such that $\partial \Omega$ "just" satisfies the condition $(U)_h$, so there exist appropriate $h_1, h_2$ such that (\ref{1.1}) holds. Therefore, the limiting condition here is (\ref{1.2}). Note that if
$$
h_1(t) \asymp t^{\alpha_1}, \ \ \  h_2(t)\asymp t^{\alpha_2},
$$ 
with $1<\alpha_2 \le \alpha_1<2$, then (\ref{1.2}) are satisfied; moreover, if
$$
h_1(t) \asymp t\left( \log^{\circ k} \frac{1}{t}   \right)^{-\beta_1}, \ \ \ h_2(t) \asymp t \left( \log^{\circ k} \frac{1}{t}   \right)^{-\beta_2},
$$ 
with $0< \beta_2 \le \beta_1$, then the theorem also applies, where the notation $f^{\circ k}$ denotes the $k$-fold composition of $f$ with itself, for $k \ge 1$; the theorem also applies to functions $h_i(t)$ of the form $Cte^{-\left( \log \frac{1}{t} \right)^{\gamma}}$ with $\gamma <1$. It can be seen that many domains with weakly uniformly perfect boundaries are applicable to this theorem. However, it is still difficult to answer Chen's question at present.

Next, we are concerned with lower bound estimates for the Bergman distance $d_{\Omega}(z_0,z)$, where $z_0$ is a fixed point in $\Omega$ and $z \to \partial \Omega$. It is known that when the boundary of the domain is uniformly perfect, we have
$$
d_{\Omega}(z_0, z) \gtrsim \log \frac{1}{\delta_{\Omega}(w)}.
$$
For the case of $h$-uniform perfectness, one approach is to translate condition $(U)_h$ into a hyperconvexity condition. From quantitative hyperconvexity conditions to lower bounds for the Bergman distance, there is already a standard procedure; see, e.g., \cite{Chen2017,Chen2023, ChenZheng}. When the boundary satisfies condition $(U)_{1,\alpha}$, it follows from \cite{Chen2023} that
$$
d_{\Omega}(z_0,z)\gtrsim \log \log \log \frac{1}{\delta_{\Omega}(w)}.
$$
If $\partial \Omega$ satisfies the condition $(U)_{2,\beta}$, then Lemma 3.1 in \cite{Chen2023} implies that there exists a continuous negative subharmonic function $\rho$ on $\Omega$ such that 
$$
-\rho(z) \le C_0\exp\left( C_1 \cdot \frac{\log \delta_{\Omega}(z)}{\log \log \frac{1}{\delta_{\Omega}(z)}} \right).
$$ 
Thus, using the same method, one can obtain
$$
d_{\Omega}(z_0,w)\gtrsim \frac{\log \frac{1}{\delta_{\Omega}(w)}}{\left( \log \log \frac{1}{\delta_{\Omega}(w)}\right)^2},
$$
the proof of which we leave to the reader.

However, the estimates obtained via this procedure seem to still have a certain gap from the optimal ones. For instance, on the two types of Zalcman-type domains constructed in \cite{XiongZheng}, the local behavior of the Bergman distance is found to be indeed superior to the above estimates. We wish to further investigate this phenomenon.

A natural approach is to estimate $b_{\Omega}(z)$ pointwise and then integrate to obtain its boundary behavior. Using Theorem 1.2, together with the trivial estimate $K_{\Omega}(w) \lesssim \delta_{\Omega}(w)^{-2}$, one can directly obtain a lower bound for $b_{\Omega}(w)$; unfortunately, substituting such an estimate into an integration along geodesics may not even yield Bergman completeness. This indicates that the estimate in Theorem \ref{lowerboundK1} only reflects the worst-case behavior of $b_{\Omega}(w)$, and Theorem \ref{upperboundK1} shows that such worst-case behavior indeed occurs. In other words, on domains with weakly uniformly perfect boundaries, the boundary behavior of $b_{\Omega}(z)$ fluctuates over a wide range, and only its better-behaved parts play a dominant role in the integration process for computing $d_{\Omega}(z_0,z)$. This also makes this approach applicable only to certain specific domains.

The proof of Theorem \ref{lowerboundK1} provides a method for obtaining lower bounds for $K^{(1)}_{\Omega}(w)$. However, in order to estimate the lower bound of $b_{\Omega}(z)$ as precisely as possible, one also needs to discuss upper bounds for the Bergman kernel; in this regard, see, e.g., \cite{BlockiZwonek2018}. For $w \in \overline{\Omega}$ and $R>0$, write $E(w,R)=\overline{D(w,R)}-\Omega$. We obtain the following result.

\begin{theorem}\label{upperboundK}
For any point $w$ in a domain $\Omega \subset \mathbb{C}$, if one chooses a real number $R>\delta_{\Omega}(w)$ such that
$$
\mathrm{Cap}(E(w,R))<2\delta_{\Omega}(w),
$$
then one necessarily has
$$
K_{\Omega}(w)\le \frac{18}{\pi}\cdot \frac{1}{\delta_{\Omega}(w)^2}\cdot \left( \frac{1}{\log \frac{2\delta_{\Omega}(w)}{\mathrm{Cap}(E(w,R))}}+\frac{1}{\log \frac{2R}{\delta_{\Omega}(w)}}   \right).
$$
\end{theorem}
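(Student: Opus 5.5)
The plan is to bound $|f(w)|$ for an arbitrary $f\in A^2(\Omega)$ via a Cauchy--Pompeiu representation with a well-chosen cutoff, and then use
$$K_{\Omega}(w)=\sup\{|f(w)|^2:\|f\|_{L^2(\Omega)}=1\}.$$
Write $\delta=\delta_{\Omega}(w)$. Let $\chi$ be a Lipschitz function with compact support in $D(w,R)$ that vanishes on a neighbourhood of $E=E(w,R)$ and satisfies $\chi\equiv 1$ near $w$. Then $f\chi$ is compactly supported in $\Omega$, and Cauchy--Pompeiu gives
$$f(w)=-\frac{1}{\pi}\int_{\Omega} \frac{f(z)\,\bar\partial\chi(z)}{z-w}\,dA(z).$$
By Cauchy--Schwarz,
$$|f(w)|^2\le \frac{1}{\pi^2}\,\|f\|^2\int \frac{|\bar\partial\chi|^2}{|z-w|^2}\,dA .$$
So everything reduces to building $\chi$ whose weighted Dirichlet integral $\int|\bar\partial\chi|^2|z-w|^{-2}$ is at most a constant times $\delta^{-2}$ times the bracket in the statement.

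I take $\chi=\chi_1\chi_2$ and use
$$|\bar\partial\chi|^2\le 2|\bar\partial\chi_1|^2+2|\bar\partial\chi_2|^2 ,$$
which is where the two summands and part of the constant $18/\pi$ come from.

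\textbf{The outer cutoff $\chi_1$.} This is a radial cutoff equal to $1$ on $D(w,\delta)$ and to $0$ outside $D(w,R)$. I will use a logarithmic profile, or better the minimiser of the radial weighted problem $\int_\delta^R \chi_1'(r)^2 r^{-1}\,dr$. The Euler--Lagrange equation is $(\chi_1'/r)'=0$, and I will either solve it or settle for a slightly suboptimal profile adapted to the annulus $\delta<|z-w|<2R$-type scale. The aim is a bound of the form $\lesssim \delta^{-2}\bigl(\log\frac{2R}{\delta}\bigr)^{-1}$. I will fix the precise profile by tuning to get the $\log(2R/\delta)$ term, using the hypothesis $R>\delta$.

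\textbf{The inner cutoff $\chi_2$.} This one kills $f$ near $E$, and it is built from the Green function $g_E$ of $\widehat{\mathbb C}\setminus E$ with pole at $\infty$. Recall $g_E(z)=\log|z|-\log\mathrm{Cap}(E)+o(1)$. Put $L=\log\frac{2\delta}{\mathrm{Cap}(E)}>0$, which is positive by hypothesis, and set
$$\chi_2=\min\{1,\,g_E/L\}.$$
Its Dirichlet energy is $\int|\nabla\chi_2|^2=2\pi/L$; this is the standard flux computation for the Green function. It remains to control the weight $|z-w|^{-2}$ on the set $\{0<g_E<L\}$. Every point of $E$ lies at distance $\ge\delta$ from $w$, but the set where $\nabla\chi_2\neq0$ might approach $w$.

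\textbf{Key step and main obstacle.} The crux is to show that the sublevel set $\{g_E<L\}$ stays at distance comparable to $\delta$ from $w$, for instance outside $D(w,\delta/2)$. Then $|z-w|^{-2}\le 4/\delta^{2}$ there, which yields the term $\delta^{-2}L^{-1}$. I would try to prove this with a capacity/Green-function comparison: for $z$ with $\mathrm{dist}(z,E)=s$, I expect $g_E(z)\ge\log\frac{s}{\mathrm{Cap}(E)}$ up to an additive constant, obtained by comparing with a logarithmic potential of the equilibrium measure. Alternatively one can use $g_E(z)\ge\log\frac{2s}{\mathrm{Cap}(E)}$-type estimates valid when $\mathrm{Cap}(E)<2\delta$. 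The factor $2$ in $2\delta$ is presumably chosen exactly so that $g_E\ge L$ on $D(w,\delta/2)$ or on a comparable disk. If this fails, I would instead shrink the inner disk for $\chi_1$ and absorb the loss into the constant. Finally I collect the constants, checking that $\frac{1}{\pi^2}\cdot 2\cdot(\text{const})$ gives at most $\frac{18}{\pi}$.
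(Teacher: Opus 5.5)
Your route is different from the paper's, which combines the B{\l}ocki--Zwonek Green-function bound, Harnack on $\partial D(w,\delta/3)$, the Dirichlet capacity $C_d(\overline{D(w,\delta/3)},\Omega)$, an inversion, and Ransford's subadditivity of $1/\log(1/\mathrm{Cap})$. Your route can be made to work, but the step you call the crux is false as stated. On the unbounded component of $\mathbb{C}\setminus E$ one has exactly $g_E(z)=\int\log|z-\zeta|\,d\mu_E(\zeta)-\log\mathrm{Cap}(E)\ge\log\frac{\mathrm{dist}(z,E)}{\mathrm{Cap}(E)}$, with no additive constant, and this is sharp. Take $E(w,R)=\overline{D(a,\epsilon)}$ with $|a-w|=\delta+\epsilon$ and $\epsilon<\delta$. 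Then $\{g_E<L\}=D(a,2\delta)\setminus E$, which contains $w$. So $\chi_2=\min\{1,g_E/L\}$ has $\nabla\chi_2(w)\neq 0$, and $\int|\nabla\chi_2|^2|z-w|^{-2}\,dA=+\infty$. The same example refutes your alternative bound $g_E\ge\log\frac{2s}{\mathrm{Cap}(E)}$. Shrinking the inner disc of $\chi_1$ does not help, because the obstruction lies in $\chi_2$.

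The best admissible threshold is $L'=\log\frac{\delta}{2\,\mathrm{Cap}(E)}=L-\log 4$, which makes $\chi_2\equiv1$ on $D(w,\delta/2)$. This loss is additive, not a constant factor. Under the sole hypothesis $\mathrm{Cap}(E)<2\delta$ one can have $L'\le 0$. Worse, $E$ may separate $w$ from $\infty$: take $\Omega=\mathbb{D}$, $w=0$, $1<R<2$, so $E=\{1\le|z|\le R\}$ and $\mathrm{Cap}(E)=R<2\delta$. Then $g_E$ is not defined at $w$ at all.

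The missing idea is a case split using the trivial bound $K_\Omega(w)\le K_{D(w,\delta)}(w)=\frac{1}{\pi\delta^2}$. This already gives the theorem whenever $L\le 18$.

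If instead $L>2\log 4$, then $\mathrm{Cap}(E)<\delta/8$. Hence $E$ cannot separate $w$ from $\infty$, since separation would force $\mathrm{Cap}(E)\ge\mathrm{Cap}(\overline{D(w,\delta)})=\delta$. So $\Omega$ lies in the unbounded component, and $L'\ge L/2$. With $\chi_2=\min\{1,g_E/L'\}$ and $|\bar\partial\chi|^2=\frac14|\nabla\chi|^2$, the $\chi_2$ part contributes at most $\frac{1}{4\pi^2}\cdot 2\cdot\frac{4}{\delta^2}\cdot\frac{2\pi}{L'}\le\frac{8}{\pi\delta^2 L}$.

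For $\chi_1$, use your Euler--Lagrange profile $\frac{R^2-r^2}{R^2-\delta^2/4}$ on $\delta/2\le r\le R$. Do not use inner radius $\delta$, whose energy blows up as $R\downarrow\delta$. Its weighted energy is $\frac{4\pi}{R^2-\delta^2/4}\le\frac{16\pi}{3R^2}$, contributing $\frac{8}{3\pi R^2}\le\frac{8\log 2}{3\pi\delta^2\log(2R/\delta)}$, since $R\ge\delta$. This yields the theorem with constant $8/\pi$. In fact it gives the stronger outer term $O(R^{-2})$, which the paper's unweighted capacity argument cannot see.

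Finally, irregular points of $E$ mean $g_E$ need not vanish near $E$. Run the argument for $E_\eta=\{\mathrm{dist}(\cdot,E)\le\eta\}$, which is regular, and let $\eta\to0$ using continuity of capacity along decreasing compacta. This also covers polar $E$.
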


For each point $w$, by choosing an appropriate $R=R(w)$, one can give an asymptotic upper estimate for $K_{\Omega}(w)$. In practice, one should choose a suitable $R$ to balance the two terms inside the parentheses on the right-hand side.

We can make a simple comparison between this result and Theorem 1.4 of \cite{XiongZheng}. For $w\in \Omega$, choose $w' \in \partial \Omega$ such that $|w-w'|=\delta_{\Omega}(w)$. 
\begin{itemize}
\item 
Assume that for every $w \in \overline{\Omega}$  and every $r \in (0, \mathrm{diam}(\Omega)]$, one always has
$$
\mathrm{Cap}(E(w,r)) \lesssim r^{\alpha}, \ \ \  \alpha>0.
$$
In this case, we may take $R= \delta_{\Omega}(w)^{\alpha'}$, where $\alpha'>0$ satisfies $ \frac{1}{\alpha}< \alpha'<1$. Since
$$
\mathrm{Cap}(E(w,R)) \lesssim R^{\alpha}\asymp \delta_{\Omega}(w)^{\alpha \alpha'}<2\delta_{\Omega}(w),
$$
Theorem \ref{upperboundK} then yields
$$
K_{\Omega}(w) \lesssim \frac{1}{\delta_{\Omega}(w)^2}\left( \frac{1}{\log \frac{2\delta_{\Omega}(w)}{\delta_{\Omega}(w)^{\frac{1}{2}(\alpha+1)}}} +\frac{1}{\log \frac{ \delta_{\Omega}(w)^{\frac{1}{2}(1+\frac{1}{\alpha})} }{\delta_{\Omega}(w)}}    \right) \lesssim \frac{1}{\delta_{\Omega}(w)^2 \log \frac{1}{\delta_{\Omega}(w)}}.
$$

\item
Assume that for every $w \in \overline{\Omega}$ and every $r \in (0, \mathrm{diam}(\Omega)]$, one always has
$$
\mathrm{Cap}(E(w,r)) \lesssim r \left(  \log \frac{1}{r} \right)^{-\beta},\ \ \  \beta>0.
$$
In this case, we may take
$$R = \delta_{\Omega}(w)\left( \log \frac{1}{\delta_{\Omega}(w)}  \right)^{\beta'},$$ 
where $0<\beta'<\beta$. Then we always have
\begin{eqnarray*}
\mathrm{Cap}(E(w,R)) &\lesssim&  R \left( \log \frac{1}{R}  \right)^{-\beta}\\
&\lesssim&  \delta_{\Omega}(w) \left( \log \frac{1}{\delta_{\Omega}(w)} \right)^{\beta'} \left( \log \frac{1}{\delta_{\Omega}(w)} -\frac{\beta}{2} \log \log \frac{1}{\delta_{\Omega}(w)}  \right)^{-\beta}\\
&<&2\delta_{\Omega}(w).
\end{eqnarray*}
Therefore we obtain
$$
K_{\Omega}(w) \lesssim \frac{1}{\delta_{\Omega}(w)^2 \log \log \frac{1}{\delta_{\Omega}(w)}}.
$$ 
\end{itemize}
Thus the estimate in Theorem \ref{upperboundK} is already quite precise.

As an application of Theorems \ref{lowerboundK1} and \ref{upperboundK}, on some specific domains we can obtain the following result.

\begin{theorem}\label{lowerboundd}
Let $\Omega \subset \mathbb{C}$ be a domain with $0 \in \partial \Omega$, and suppose $\partial \Omega$ satisfies condition $(C)_h$. Assume that there exist a constant $c \in (0,1)$ and a sequence of annuli
$$
A_k:= \{ z \in \mathbb{C}; h(r_k) <|z|<r_k  \} \subset \Omega, \,\,\,k=1,2,\cdots, 
$$
such that $ch(r_k) \le r_{k+1} \le h(r_k)$ for each $k \in \mathbb{N}^{+}$. 

$(1)$ When $h(t)=Ct^{\alpha}$, suppose that there exists $\alpha' \in (1, \frac{1}{2-\alpha})$ such that for each $k$,
$$
\mathrm{Cap}(\overline{D(0,h(r_k))} \setminus \Omega) \lesssim r_k^{ \alpha'}.
$$
Then the following estimate holds:
$$
d_{\Omega}(z, z_0) \gtrsim \log \log \frac{1}{|z|},\,\,\,z \to 0.
$$

$(2)$ When $h(t)=Ct\left( \log \frac{1}{t}  \right)^{-\beta}$, suppose that there exists $\beta' \in (0, 2\beta)$ such that for each $k$,
$$
\mathrm{Cap}(\overline{D(0,h(r_k))} \setminus \Omega) \lesssim r_k \left( -\log \frac{1}{r_k}  \right)^{-\beta'},
$$
then the following estimate holds:
$$
d_{\Omega}(z, z_0) \gtrsim  \frac{\log \frac{1}{|z|}}{\log \log \frac{1}{|z|}},\,\,\,z \to 0.
$$
\end{theorem}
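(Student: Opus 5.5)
Since $d_\Omega(z_0,z)=\inf_\gamma\int_\gamma b_\Omega(\zeta)\,|d\zeta|$ over curves $\gamma$ joining $z_0$ to $z$, I would bound $b_\Omega=\sqrt{K^{(1)}_\Omega/K_\Omega}$ from below on a sequence of ``good'' circles and sum their contributions. Any curve from a fixed $z_0$ to a point $z$ near $0$ must cross every annulus $A_k$ lying between them, and in particular must join the inner and outer parts of a middle sub-annulus of $A_k$, say $\{\sqrt{r_k h(r_k)}\,e^{-1}<|\zeta|<\sqrt{r_k h(r_k)}\,e\}$, or any sub-annulus sitting well inside $A_k$ away from both boundary circles. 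On such a sub-annulus the distance to the boundary is comparable to $|\zeta|$ up to a factor controlled by $h(r_k)/r_k$. If I can show $b_\Omega(\zeta)\gtrsim \lambda_k/|\zeta|$ there, crossing it costs at least $\lambda_k\log(\text{ratio of radii})$. Then $d_\Omega(z_0,z)\gtrsim\sum_{k\le N(z)}\lambda_k\cdot(\text{log-width})$, where $N(z)$ is the number of annuli separating $z$ from $z_0$. With $r_{k+1}\asymp h(r_k)$, in case (1) $\log\frac1{r_k}\asymp\alpha^k$, so $N(z)\asymp\log\log\frac1{|z|}$. In case (2), $\log\frac1{r_{k}}\approx k\beta\log\log\frac1{r_k}$, so $N(z)\asymp\log\frac1{|z|}/\log\log\frac1{|z|}$. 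It therefore suffices to obtain $\lambda_k\cdot\text{log-width}\gtrsim1$ on each chosen sub-annulus.

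For the lower bound on $b_\Omega$ at points $w$ of such a sub-annulus I would combine the two available tools. Theorem \ref{lowerboundK1} (or rather its proof, applied locally using condition $(C)_h$, which I take to be the hypothesis making the boundary $h$-uniformly perfect) gives the numerator. Rather than using it at $\delta_\Omega(w)$ directly, I would apply the construction at the scale $|w|$, since the inner complementary piece $\overline{D(0,h(r_k))}\setminus\Omega$ lies at distance $\asymp|w|$. Theorem \ref{upperboundK} bounds the denominator. Take $R$ comparable to $r_{k}$-scale (so the outer complement lies beyond $R$) and note that $E(w,R)\subset\overline{D(0,h(r_k))}\setminus\Omega$ up to the portion outside $D(0,r_k)$. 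The capacity hypothesis $\mathrm{Cap}\lesssim r_k^{\alpha'}$ (resp.\ $r_k(\log\frac1{r_k})^{-\beta'}$) then makes $\log\frac{2\delta}{\mathrm{Cap}}$ large, giving $K_\Omega(w)\lesssim\delta^{-2}/\log\frac1{r_k}$ in case (1) and $\lesssim\delta^{-2}/\log\log\frac1{r_k}$ in case (2). Dividing yields $b_\Omega(w)^2\gtrsim|w|^{-2}\cdot(\text{gain from }K)\cdot(\text{loss from }K^{(1)})$.

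The exponent conditions are exactly what make the gain beat the loss. In case (1), the loss in $K^{(1)}$ is governed by $h(t)/t=Ct^{\alpha-1}$ at the relevant scale, and $\alpha'<1/(2-\alpha)$ is the requirement that the capacity gain compensate it, so that $\lambda_k\cdot\log$-width stays $\gtrsim1$. In case (2), $\beta'<2\beta$ should play the same role against the $(\log)^{-2\beta}$ loss in Theorem \ref{lowerboundK1}(2). I also have to choose the position of $w$ in $A_k$, its distance $\delta$ from the inner complementary set, so that the restriction $\mathrm{Cap}(E(w,R))<2\delta_\Omega(w)$ in Theorem \ref{upperboundK} holds. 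This means taking $\delta$ slightly larger than $r_k^{\alpha'}$ while still $\ll h(r_k)$-scale corrections allow.

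The main obstacle is this balancing step: tuning the sub-annulus radius and $R$ so that simultaneously the Theorem \ref{upperboundK} hypothesis holds, the localized $K^{(1)}$ lower bound is not too lossy, and the resulting $\lambda_k$ integrated over a sub-annulus of log-width $\asymp\log\frac{r_k}{h(r_k)}$ gives a contribution bounded below uniformly in $k$. I would first try placing $w$ at $|w|=\sqrt{r_kh(r_k)}$ with $R=r_k/2$, computing the exponents explicitly. If the constant degenerates, I would instead integrate $\lambda(|\zeta|)/|\zeta|$ over the whole admissible range of radii in $A_k$ rather than fixing one.
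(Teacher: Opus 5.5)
There is a genuine gap. Your count of the annuli, $N(z)\asymp\log\log\frac1{|z|}$ (resp.\ $\log\frac1{|z|}/\log\log\frac1{|z|}$), is fine. The core estimate, however, is placed in the wrong part of $A_k$, and no tuning of $R$, $\alpha'$, $\beta'$ can repair it.

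On your middle sub-annulus $\{\sqrt{r_kh(r_k)}\,e^{-1}<|\zeta|<\sqrt{r_kh(r_k)}\,e\}$ the Bergman metric is $o(1/|\zeta|)$. The same holds on any sub-annulus far, in logarithmic scale, from both circles of $A_k$. To see this:
\begin{itemize}
\item By monotonicity $K^{(1)}_\Omega(w)\le K^{(1)}_{A_k}(w)$, and the annulus computation behind Theorem \ref{upperboundK1} gives $K^{(1)}_{A_k}(w)=O(|w|^{-4}h(r_k)/r_k)$ there.
\item Proposition \ref{prop6.1} gives $K_\Omega(w)\gtrsim |w|^{-2}/L$ with $L$ only logarithmic ($\asymp\log\frac1{r_k}$, resp.\ $\log\log\frac1{r_k}$).
\end{itemize}
Hence $|w|\,b_\Omega(w)\lesssim\sqrt{L\,h(r_k)/r_k}\to0$, so $\lambda_k\cdot(\text{log-width})\to 0$.

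In terms of your own bookkeeping: at $|w|\gg h(r_k)$, Proposition \ref{prop5.4} with $a_1,a_2$ in the inner piece loses the factor $|a_2-a_1|^2/|w|^2\lesssim (h(r_k)/|w|)^2$. This is polynomial in $r_k$ in case (1) and a power of $\log\frac1{r_k}$ in case (2). Theorem \ref{upperboundK} can only improve $K_\Omega\lesssim\delta_\Omega^{-2}$ by a logarithmic factor, so there is no ``gain beating a loss'' to arrange. Your reading of $\alpha'<\frac1{2-\alpha}$ as that compensation is not what happens. Your fallback of integrating over all radii does not help either: it needs a lower bound near a boundary circle of $A_k$, which is exactly what is missing.

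The paper works at bounded ratio from the inner circle, taking $2h(r_k)\le|w|\le 3h(r_k)$, so that $\delta_\Omega(w)\asymp h(r_k)$. In Proposition \ref{prop5.4} it takes:
\begin{itemize}
\item $a_1=0$;
\item $a_2\in\partial\Omega\cap\{|z|=h(r_k)\}$ and $a_3\in\partial\Omega\cap\{|z|=r_k\}$. Both circles must meet $\partial\Omega$, since the open annulus lies in $\Omega$ and $\partial\Omega$ is $h$-uniformly perfect by Theorem \ref{thm2.6}.
\item $E_i$ of radius $h(r_k)/n$.
\end{itemize}
Now $|a_1-a_2|\asymp|w-a_1|\asymp|w-a_2|\asymp h(r_k)$, so nothing polynomial is lost. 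One gets $K^{(1)}_\Omega(w)\gtrsim h(r_k)^{-4}/L_k$ with $L_k=\log\frac{8r_k}{\min_i\mathrm{Cap}(E_i)}$. The capacity lower bound makes $L_k\asymp\log\frac{r_k}{h(r_k)}$ in both cases.

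Theorem \ref{upperboundK} with $R=r_k/2$, using $E(w,R)\subset\overline{D(0,h(r_k))}\setminus\Omega$, gives $K_\Omega(w)\lesssim h(r_k)^{-2}/L_k$. The term $1/\log\frac{2R}{\delta_\Omega(w)}$ is $\lesssim 1/L_k$ automatically. The upper capacity hypothesis serves only to make $\log\frac{2\delta_\Omega(w)}{\mathrm{Cap}(E(w,R))}\gtrsim L_k$.

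The two factors $L_k$ cancel, so $b_\Omega(w)\gtrsim 1/|w|$ on $\{2h(r_k)\le|\zeta|\le 3h(r_k)\}$. Each crossing of this sub-annulus costs $\gtrsim\log\frac32$, and your count of annuli finishes the proof.
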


It is easy to verify that the Zalcman-type domains $\Omega_{1,\alpha}$ and $\Omega_{2,\beta}$ constructed in \cite{XiongZheng} satisfy the hypotheses of Theorem \ref{lowerboundd}.

\section{Preliminaries}

\subsection{Weak uniform perfectness}

For a closed set $E \subset \mathbb{C}$ and $a \in E$, we define the following function:
$$
h_a(r)=\inf \{ r': A(a; r', r) \subset   E^c   \},  \ \ \ \forall r \in (0, \mathrm{diam}(\Omega)), 
$$
where
$$
A(a; r',r):=\{ z \in \mathbb{C}: r'<|z-a|<r  \},
$$
and if $r'=r$ we regard $A(a; r',r)=\emptyset$, so that the definition of $h_a(r)$ is always meaningful and we always have $h_a(r) \le r$. It is also easy to see that $h_a(r)$ is monotonically increasing in $r$.

Now set
$$
h_E(r)=\inf_{a \in E}  {h_a(r)}, \ \ \ \forall r \in (0, \mathrm{diam}(\Omega)], 
$$
Clearly $0 \le h_E(r) \le r$, and since each $h_a(r)$ is monotonically increasing, it follows that $h_E(r)$ is also monotonically increasing.

\begin{proposition}\label{prop2.1}
Let $E \subset \mathbb{C}$ be a closed set.

$(1)$ $E$ satisfies condition $(U)_{h_E}$;

$(2)$ If for some $r \in (0, \mathrm{diam}(E)]$ we have $h_E(r)<r'$, then there exists $a \in E$ such that
$$\{ z: r'<|z-a|<r  \} \subset E^c.$$
\end{proposition}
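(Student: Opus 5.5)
Both parts follow directly from the definitions of $h_a$ and $h_E$ as infima; the only care needed is at the endpoints of the annuli.

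For part (2), suppose $h_E(r)<r'$. Since $h_E(r)=\inf_{a\in E}h_a(r)$, there is $a\in E$ with $h_a(r)<r'$. Since $h_a(r)$ is itself an infimum, there is some $r''$ with $h_a(r)\le r''<r'$ and $A(a;r'',r)\subset E^c$. If $r''=r$ the annulus is empty by convention, but then $r'>r''=r$ and the statement is vacuous. Otherwise $A(a;r',r)\subset A(a;r'',r)\subset E^c$, since $r''<r'$. This is exactly the assertion of (2).

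For part (1), fix $a\in E$ and $r$ in the admissible range; we must show $E\cap\{h_E(r)\le|z-a|\le r\}\ne\emptyset$. Suppose not. Then the closed annulus $\{h_E(r)\le |z-a|\le r\}$ is disjoint from the closed set $E$.

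We first locate a slightly smaller inner radius. If $h_E(r)>0$, the closed annulus is compact, so it has positive distance $\varepsilon$ from $E$. Hence $E$ also misses $\{h_E(r)-\varepsilon/2<|z-a|<r\}$, giving $h_a(r)\le h_E(r)-\varepsilon/2<h_E(r)$. This contradicts $h_E(r)\le h_a(r)$.

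It remains to handle $h_E(r)=0$. Then the punctured disc $\{0<|z-a|\le r\}$ misses $E$. But $a\in E$ with $|a-a|=0=h_E(r)$, so $a$ itself lies in the set $\{h_E(r)\le|z-a|\le r\}$. This contradicts the assumption that the set meets no point of $E$.

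So both cases lead to contradictions, and $E$ satisfies $(U)_{h_E}$. One should also check that $h_E(r)<r$, as the definition of $(U)_h$ requires. If $h_E(r)=r$, the set in question contains the circle $|z-a|=r$. That circle meets $E$ whenever $E$ is not contained in $D(a,r)$, which holds for $r<\mathrm{diam}(E)$ with a suitable choice of point. I expect this endpoint/degenerate case, and the precise convention on the range of $r$, to be the only delicate point; the compactness argument is the main step.
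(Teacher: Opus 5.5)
Your argument is correct and is essentially the paper's: for (2) you unwind the two infima defining $h_E$ and $h_a$, and for (1) you use compactness of the closed annulus to push the inner radius strictly below $h_E(r)$, contradicting $h_E(r)\le h_a(r)$; your separate treatment of $h_E(r)=0$ just makes explicit what the paper leaves implicit. You should, however, delete your last paragraph, because its claims are either redundant or false:
\begin{enumerate}[(i)]
\item The compactness argument already covers $h_E(r)=r$, since the set is then the circle $|z-a|=r$, which is still compact.
\item The strict inequality $h_E(r)<r$ fails in general. For $E=\partial\mathbb{D}$, every circle of radius at most $2$ about a point $a\in E$ meets $E$, so $h_E(r)=r$ on $(0,2]$. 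The paper only records $0\le h_E(r)\le r$ and proves just the intersection property.
\item The claim that the circle $|z-a|=r$ meets $E$ whenever $E\not\subset D(a,r)$ is false for disconnected $E$.
\end{enumerate}
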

\begin{proof}
$(1)$ Indeed, for any $a \in E$ and $r\in (0, \mathrm{diam}(E)]$, we have
$$
\{  z: h_E(r) \le |z-a| \le r \} \cap E \ne \emptyset.
$$
Otherwise, there would exist $a\in E$ and $r\in (0, \mathrm{diam}(E)]$ such that
$$
\{ z: h_E(r) \le |z-a| \le r  \} \subset E^c.
$$ 
Since $\{ z: h_E(r) \le |z-a| \le r  \}$ is compact, there must exist $\varepsilon>0$ such that
$$
\{ z: h_E(r)-\varepsilon \le |z-a| \le r  \}\subset E^c,
$$
hence
$$
h_E(r)\le h_a(r) \le h_E(r)-\varepsilon,
$$
which is a contradiction.

$(2)$ If $r' \ge r$, then trivially
$$
\{ z: r'<|z-a|<r \}=\emptyset  \subset E^c.
$$
If $h_E(r)<r'<r$, then there must exist $a \in E$ such that
$h_a(r)<r'<r,$
and by the definition of $h_a$ it immediately follows that
$$
\{ z: r'<|z-a|<r \} \subset  E^c.
$$
\end{proof}

However, this $h_E$ may fail to be continuous, and it may happen that $h_E(r) \ll r$ for many $r$ while at the same time $h_E(r)=r$ at many other points, and $\frac{h_E(r)}{r}$ need not be monotonic either. Therefore it is still not suitable for characterizing condition $(U)_h$.

Set
$$
u(r)=\frac{h_E(r)}{r}, \ \ \ r \in (0, \mathrm{diam}(E)],
$$
and define
$$
\tilde{u}(r)=\inf \{u(t):  {t \in [r,\mathrm{diam}(E)]}  \}.
$$
Then $0 \le \tilde{u}(r) \le u(r)$, and $\tilde{u}(r)$ is monotonically increasing. Moreover, for weakly uniformly perfect sets one necessarily has
$$
\tilde{u}(r) \to 0, \ \ \ r \to 0,
$$
since otherwise we would have $h(t) \gtrsim t$, contradicting the assumption that $E$ is not uniformly perfect.

\begin{lemma}\label{lemma2.2}
Let $E\subset \mathbb{C}$ be compact. Then the following statements are equivalent:

$(1)$ $\tilde{u}(r)=0$ for some $r \in (0, \mathrm{diam}(E)]$;

$(2)$ $h_E(r)=0$ for some $r \in (0, \mathrm{diam}(E)]$;

$(3)$ $E$ has an isolated point.
\end{lemma}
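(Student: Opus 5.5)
The plan is to prove the cycle of implications $(1)\Rightarrow(2)\Rightarrow(3)\Rightarrow(1)$. Each step only uses the definitions of $h_a$, $h_E$, $u$, $\tilde u$ together with compactness of $E$. Throughout I take $r\in(0,\mathrm{diam}(E)]$, and I assume $E$ has at least two points so that $\mathrm{diam}(E)>0$.

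\emph{$(1)\Rightarrow(2)$.} Suppose $\tilde u(r)=0$. Then there are $t_n\in[r,\mathrm{diam}(E)]$ with $h_E(t_n)/t_n\to 0$. Since $t_n\ge r>0$, this gives $h_E(t_n)\to 0$. Because $h_E$ is monotonically increasing, $h_E(r)\le h_E(t_n)$ for every $n$, and hence $h_E(r)=0$.

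\emph{$(2)\Rightarrow(3)$.} This is the step that needs real care. Suppose $h_E(r)=0$. For each $n$ we have $h_E(r)<1/n$, so Proposition \ref{prop2.1}(2) gives $a_n\in E$ with $A(a_n;1/n,r)\subset E^c$. By compactness we pass to a subsequence with $a_n\to a\in E$.
\begin{itemize}
\item Fix $0<\rho<r/2$. For $n$ large we have $|a_n-a|<\rho/2$ and $1/n<\rho/2$.
\item Take any $z$ with $\rho\le|z-a|\le r/2$. Then $|z-a_n|\ge \rho-\rho/2>1/n$ and $|z-a_n|\le r/2+\rho/2<r$, so $z\in A(a_n;1/n,r)$.
\item Hence $E\cap\{\rho\le|z-a|\le r/2\}=\emptyset$ for every $\rho>0$, i.e. $E\cap D(a,r/2)=\{a\}$.
\end{itemize}
So $a$ is an isolated point of $E$. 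The main obstacle is exactly this limiting argument: the annuli $A(a_n;1/n,r)$ are centred at different points, and one must check that their union in the limit still punctures a fixed disc around $a$. This is why the argument is restricted to $|z-a|\le r/2$.

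\emph{$(3)\Rightarrow(1)$.} Let $a$ be an isolated point of $E$, so that $D(a,\varepsilon)\cap E=\{a\}$ for some $\varepsilon>0$. Put $r=\min(\varepsilon,\mathrm{diam}(E))$. Then $A(a;r',r)\subset E^c$ for every $r'>0$, so $h_a(r)=0$ and therefore $h_E(r)=0$. It follows that $u(r)=0$, and since $\tilde u(r)\le u(r)$ and $\tilde u\ge 0$, we get $\tilde u(r)=0$.
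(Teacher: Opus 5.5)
Your proof is correct and is essentially the paper's argument. The key step in both is the same compactness argument: centres $a_n\to a$ and annuli in $E^c$ whose inner radii tend to $0$ while the outer radii stay $\ge r$. The paper runs it directly for $(1)\Rightarrow(3)$ with outer radii $x_n\in[r,\mathrm{diam}(E)]$, whereas you first use the monotonicity of $h_E$ to freeze the outer radius at $r$.

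One small slip: in $(1)\Rightarrow(2)$, passing from $h_E(t_n)/t_n\to0$ to $h_E(t_n)\to0$ uses the upper bound $t_n\le\mathrm{diam}(E)$, not $t_n\ge r$. Your assumption $\mathrm{diam}(E)>0$ is genuinely needed, since for a singleton $E$ the lemma fails.
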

\begin{proof}
First note that $(3) \Rightarrow (2) \Rightarrow (1)$ are obvious, so it remains to prove $(1) \Rightarrow (3)$.
From
$$
\tilde{u}(r)=\inf \{u(t) : {t \in [r, \mathrm{diam}(E)]} \} =0,
$$
it follows that for each $n \in \mathbb{N}$, there exists $x_n \in [r, \mathrm{diam}(\Omega)]$ such that $u(x_n)<\frac{1}{n}$, i.e., $h_E(x_n)<\frac{x_n}{n}$. Hence there exists $a_n \in E$ such that
$$
h_{a_n}(x_n)<\frac{x_n}{n},
$$
that is,
$$
A\left(a_n; \frac{x_n}{n}, x_n\right):=\left\{ z: \frac{x_n}{n}<|z-a_n|<x_n   \right\} \subset E^c.
$$
Since $E$ is compact, the sequence $\{ a_n\}$ has an accumulation point $a \in E$; we may assume without loss of generality that $a_n \to a$. For any $w$ sufficiently close to $a$, there exists $n$ such that
$$
|a_n-a|+\frac{x_n}{n} \le |a_n-a|+\frac{\mathrm{diam}(E)}{n}<|w-a|,
$$
hence
$$
|w-a_n|>\frac{x_n}{n},
$$
so
$$
w \in A\left(a_n; \frac{x_n}{n}, x_n\right) \subset E^c.
$$
By the arbitrariness of $w$, it follows that $a$ is an isolated point of $E$, i.e., $(1) \Rightarrow (3)$. This completes the proof.
\end{proof}

In order to find a suitable function for characterizing the weak uniform perfectness of $E$, we also need the following lemma.

\begin{lemma}\label{lemma2.3}
Let $g(t)$ be a monotonically increasing function on the interval $[a,b]$ with $0 \le g(a) \le g(b)<+\infty$. Then there exists a continuous increasing function $\tilde{g}(t)$ on $[a,b]$ such that $\tilde{g}(t) \le g(t)$, and $\tilde{g}(a)=g(a)$, $\tilde{g}(b)=g(b-)$.
\end{lemma}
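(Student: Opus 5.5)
The natural candidate is an explicit formula. Take the left limit $g(t-)$ for $t\in(a,b]$, set $g(a-):=g(a)$, and let $\tilde g$ be the largest continuous increasing minorant of $g$. To keep things concrete I would not use that abstract envelope. Instead I would build $\tilde g$ by interpolation on a dyadic-type partition, pinned so that its endpoint values come out right. Concretely, let $\lambda:=g(a)$ and $\mu:=g(b-)$; monotonicity gives $\lambda\le g(t)\le \mu$ for $t\in[a,b)$. If $\lambda=\mu$, we take $\tilde g\equiv\lambda$, which lies below $g$ everywhere (also at $b$, since $g(b)\ge g(b-)=\mu$), and we are done. So assume $\lambda<\mu$.

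Main construction. Choose points $a=t_0<t_1<t_2<\cdots<b$ with $t_n\to b$ (e.g.\ $t_n=b-(b-a)2^{-n}$). Define $\tilde g(t_0)=g(a)$ and $\tilde g(t_n)=g(t_{n-1})$ for $n\ge1$, interpolate linearly on each $[t_{n-1},t_n]$, and put $\tilde g(b)=\mu$. We then check three properties.

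Monotone: the node values $g(a),g(t_0),g(t_1),\dots$ are nondecreasing, so the piecewise linear function is increasing.

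Minorant: for $t\in[t_{n-1},t_n]$ with $n\ge1$ we have $\tilde g(t)\le \tilde g(t_n)=g(t_{n-1})\le g(t)$, using monotonicity of $g$ and $t\ge t_{n-1}$. At $b$, $\tilde g(b)=\mu=g(b-)\le g(b)$.

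Continuity: on $[a,b)$ this is clear, since the function is piecewise linear with locally finitely many pieces. At $b$, for $t\in[t_{n-1},t_n]$ we have $\tilde g(t)\in[g(t_{n-2}),g(t_{n-1})]$ (with the obvious convention for small $n$). Since $g(t_k)\uparrow g(b-)=\mu$ as $k\to\infty$, this gives $\tilde g(t)\to\mu=\tilde g(b)$.

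Endpoint values: $\tilde g(a)=g(a)$ and $\tilde g(b)=g(b-)$ hold by construction.

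Main obstacle. The only delicate point is the left endpoint. One must not set $\tilde g(t_1)=g(t_1)$, because $g$ may jump at points inside $[t_0,t_1]$ and linear interpolation would then exceed $g$ just to the right of $a$. The one-step lag in the node values, $\tilde g(t_n)=g(t_{n-1})$, is exactly what avoids this: on each interval, $\tilde g$ never exceeds the value of $g$ at that interval's left endpoint. The right endpoint is handled by the limit $g(t_k)\to g(b-)$, which is where the statement needs $g(b-)$ rather than $g(b)$. If strict monotonicity of $\tilde g$ were required later, one could add a small correction such as $\varepsilon(t-a)(b-t)$ and subtract slack. I expect this is unnecessary, since ``increasing'' in the paper means nondecreasing.
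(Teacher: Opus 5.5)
Your proof is correct, and it takes a different route from the paper. The paper uses the decomposition of a monotone function into a continuous part plus jump functions, $g=g_c+\sum_i\Delta_i$. It replaces each jump $\Delta_i$ by a continuous increasing ramp $\phi_i\le\Delta_i$ with $\phi_i(a)=0$ and $\phi_i(b)$ equal to the full jump, and sets $\tilde g=g_c+\sum_i\phi_i$; summability of the jumps gives uniform convergence. You instead build $\tilde g$ directly as a piecewise-linear interpolant on nodes $t_n\uparrow b$ with lagged values $\tilde g(t_n)=g(t_{n-1})$. The minorant property is then just $\tilde g(t)\le \tilde g(t_n)=g(t_{n-1})\le g(t)$ on $[t_{n-1},t_n]$, and continuity at $b$ follows from $g(t_k)\uparrow g(b-)$. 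What your version buys is that it is elementary and self-contained: it needs no structure theorem or series, and it shows exactly why the right endpoint value has to be $g(b-)$. It also avoids the endpoint bookkeeping the decomposition argument needs, where a right jump of $g$ at $a$, or a jump at $b$, must be handled separately to keep $g_c$ continuous. The paper's construction, in exchange, keeps $g_c$ intact and can track $g$ as closely as desired away from its jumps by using steep ramps, whereas yours lags $g$ by one node. That loses nothing here, since only the minorant property and the two endpoint values are used later in Proposition~\ref{prop2.4}. Your separate treatment of $\lambda=\mu$ is harmless but unnecessary: the general construction already returns the constant function in that case.
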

\begin{proof}
Since $g$ has at most countably many discontinuity points, let us denote by $\{ a_i \}$ its discontinuity points other than possibly $a$ and $b$. On $[a,b]$, we represent $g$ as
$$
g(t)=g_c(t)+\sum_{i=1}^{\infty} \Delta_i(t),
$$
where $g_c$ is a continuous function and $\Delta_i(t)$ is the jump function at $a_i$ (cf. \cite{Stein}, Lemma 3.13, e.g.). For each $\Delta_i(t)$, provided $a_i \ne b$, it is easy to find a continuous increasing function $\phi_i(t) \le \Delta_i(t)$ such that
$$
\phi_i(a)= \Delta_i(a)=0, \ \ \   \phi_i(b)= \Delta_i(b)=g(a_i+)-g(a_i-).
$$
If $a_i=b$, we simply set $\phi_i \equiv 0$.
Now take
$$\tilde{g}(t)=g_c(t)+\sum_i \phi_i(t).$$
Clearly $\sum_i \phi_i(t)$ converges uniformly, so it is easy to verify that $\tilde{g}$ is continuous and increasing on $[a,b]$, and satisfies
$$
\tilde{g}(a)=g(a), \ \ \ \tilde{g}(b)=g(b-).
$$
\end{proof}

\begin{proposition}\label{prop2.4}
Let $E \subset \mathbb{C}$ be a compact set which is not uniformly perfect and has no isolated points. Then there exists a continuous increasing function $h(r)$ such that:

$(1)$ $h_E(r) \ge h(r)$;

$(2)$ the function $s(r):=\frac{h(r)}{r}$ is continuous and increasing;

$(3)$ for any continuous increasing function $f(r)$ satisfying $f(r)>h(r)$ for all $r \in (0, r_0)$ for some $r_0$, $E$ does not satisfy condition $(U)_f$.
\end{proposition}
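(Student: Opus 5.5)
The plan is to build $h$ from $\tilde u$ and then smooth it with Lemma \ref{lemma2.3}. Set $D=\mathrm{diam}(E)$. Since $E$ is compact with no isolated points, Lemma \ref{lemma2.2} gives $\tilde u(r)>0$ for every $r\in(0,D]$. By construction $\tilde u$ is increasing, and $\tilde u(r)\le u(r)=h_E(r)/r$. The candidate is $h(r)=r\,v(r)$, where $v\le\tilde u$ is a continuous increasing function obtained from Lemma \ref{lemma2.3}.

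\textbf{Constructing $v$.} Lemma \ref{lemma2.3} works on compact intervals, so I would apply it on a sequence of intervals $[D2^{-n-1},D2^{-n}]$ and glue the pieces. On each interval it produces a continuous increasing $g_n\le\tilde u$ with $g_n(a)=\tilde u(a)$ at the left endpoint and $g_n(b)=\tilde u(b-)$ at the right endpoint. Adjacent pieces need not agree at the common point: the left piece ends at $\tilde u(b-)$ and the right piece starts at $\tilde u(b)\ge\tilde u(b-)$. The glued function therefore may jump upward at the points $D2^{-n}$.

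To remove these jumps, a cleaner route is to repeat the proof of Lemma \ref{lemma2.3} directly on $(0,D]$. One decomposes $\tilde u$ on $(0,D]$ into a continuous part plus jump parts, and replaces each jump by a continuous ramp lying below it. The sum still converges, since the total jump is at most $\tilde u(D)\le 1$. This gives a continuous increasing $v$ on $(0,D]$ with $0<v\le\tilde u$. Positivity must be checked: the ramps can be chosen to vanish only near $0$, and if necessary $v$ can be replaced by $\max(v,\tilde u(r)/2)$-type corrections. I expect some care is needed here.

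\textbf{Properties (1) and (2).} With $h(r)=rv(r)$, we get $h(r)\le r\tilde u(r)\le h_E(r)$, which is (1). The quotient $s(r)=h(r)/r=v(r)$ is continuous and increasing, which is (2). Moreover $h$ is continuous and increasing, being the product of two positive increasing continuous functions.

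\textbf{Property (3).} Take $f$ continuous and increasing with $f>h$ on $(0,r_0)$, and suppose $E$ satisfied $(U)_f$ with constant $r_1$. This contradicts Proposition \ref{prop2.1}(2) once we find $r<\min(r_0,r_1)$ with $h_E(r)<f(r)$: then there is $a\in E$ with $\{f(r)<|z-a|<r\}\subset E^c$, and a slightly smaller $r$ (using continuity of $f$) makes the closed annulus in the definition of $(U)_f$ miss $E$.

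This step is the main obstacle, because we only know $f>h$, and $h$ may lie strictly below $h_E$. We need points where $h$ essentially touches $h_E$, i.e. where $v(r)=u(r)$ approximately.

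\textbf{Touching points.} Consider the points where $\tilde u(r)=u(r)$, or more precisely $\inf_{t\ge r}u$ is approached at $t=r$. Since $\tilde u\to0$ while $\tilde u>0$, there are arbitrarily small $r$ with $u(r)<\tilde u(r)+\epsilon$, namely the "record" points where $\tilde u$ decreases. I would instead define $v$ so that it coincides with $\tilde u$ at a sequence $r_n\to0$ of such record points. Concretely, choose $r_n$ decreasing to $0$ with $u(r_n)\le(1+1/n)\tilde u(r_n)$, and let $v$ be the piecewise linear interpolation (in $\log r$) of the values $\tilde u(r_{n+1})$ on $[r_{n+1},r_n]$, arranged monotonically below $\tilde u$. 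Then $h(r_n)\ge h_E(r_n)/(1+1/n)$.

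This still gives only approximate touching. The fix is to compare $f(r_n)$ with $h_E$ by a limiting argument: using continuity of $f$ and the right-continuity and monotonicity properties of $h_E$, pass to slightly larger radii. This gap between approximate and exact touching is what I would spend the most effort on.
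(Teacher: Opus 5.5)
\paragraph{There is a gap in property (3).} Parts (1) and (2) are fine. Property (3), which you flag yourself as the main obstacle, is not proved, and the repair you sketch cannot work.

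From $f(r_n)>h(r_n)\ge h_E(r_n)/(1+1/n)$ nothing follows about $f(r_n)$ versus $h_E(r_n)$. Moreover, $h_E$ need not be right-continuous: for $E=\overline{D(0,\varepsilon)}\cup\{|z|=1\}$ with $0<\varepsilon<1$, one has $h_E(1)\le\varepsilon$ but $h_E(r)\ge 1$ for $r>1$.

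Approximate contact is the wrong target. What (3) needs is \emph{exact} contact along some sequence $t_n\to0$: points $\tau_{n,m}\to t_n$ (as $m\to\infty$) with $u(\tau_{n,m})\to h(t_n)/t_n$. Given that, $f(t_n)>h(t_n)$ and continuity of $f(\tau)/\tau$ give $f(\tau_{n,m})>h_E(\tau_{n,m})$ for large $m$. Proposition \ref{prop2.1}(2) then shows that $(U)_f$ fails, once you use continuity of $f$ to shrink the radius slightly so that the closed annulus misses $E$. Your global use of Lemma \ref{lemma2.3} on $(0,D]$ gives no control over where $v$ meets $\tilde u$, so it does not provide this contact either.

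\paragraph{The missing idea.} It is the one you mention and then drop: look at where $\inf_{[r,D]}u$ is approached.

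The paper takes continuity points $r_n\downarrow 0$ of $\tilde u$ and lets $t_n$ be a limit of a minimizing sequence $t_{n,m}$ for $\inf_{[r_n,D]}u$. It shows $t_n\to0$ and $\tilde u\equiv\tilde u(r_n)$ on $[r_n,t_n)$, hence $\tilde u(t_n-)=\tilde u(r_n)$. It then uses Lemma \ref{lemma2.3} on $[t_{n+1},t_n]$ to join $(t_{n+1},\tilde u(r_{n+1}))$ to $(t_n,\tilde u(r_n))$ below $\tilde u$.

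Each piece starts at $\tilde u(r_{n+1})=\tilde u(t_{n+1}-)$, exactly where the previous piece ends. This is admissible since $\tilde u(r_{n+1})\le\tilde u(t_{n+1})$, so the gluing jump you worried about never arises. And $s(t_n)=\tilde u(r_n)=\lim_m u(t_{n,m})$ is precisely the exact contact.

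Your interpolation can be rescued the same way, and the condition $u(r_n)\le(1+1/n)\tilde u(r_n)$ plays no role. Take the $r_n$ to be continuity points of $\tilde u$, and let $v\le\tilde u$ be continuous and increasing with $v(r_n)=\tilde u(r_n)$ exactly. Build it with Lemma \ref{lemma2.3} on each $[r_{n+1},r_n]$; linear interpolation in $\log r$ need not stay below $\tilde u$. Then
\[
\tilde u(r_n)=v(r_n)\le v(t_n)\le\tilde u(t_n-)=\tilde u(r_n),
\]
so $v(t_n)=\lim_m u(t_{n,m})$, which is exactly the contact required.
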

\begin{proof}

Let $u$ and $\tilde{u}$ be as defined above. The function $\tilde{u}$ need not be continuous, but by monotonicity it has at most countably many discontinuities. Hence we can choose a sequence $r_n$ of continuity points of $\tilde{u}$ decreasing monotonically to zero. On $[r_n, \mathrm{diam}(\Omega)]$, there must exist a sequence $\{t_{n,m}\}_{m \in \mathbb{N}}$ such that
$$
u(t_{n,m}) \to \tilde{u}(r_n), \ \ \ m \to +\infty.
$$
Take an accumulation point of $\{ t_{n,m} \}_{m \in \mathbb{N}}$ and denote it by $t_n$; without loss of generality, assume $t_{n,m} \to t_n$ as $m \to +\infty$.

Claim: $t_n$ must converge to $0$.
Indeed, otherwise there would be a subsequence 
$$t_{n_k} \to \tilde{t} \in (0, \mathrm{diam}(\Omega)].$$ 
Then for sufficiently small $\varepsilon$ and sufficiently large $k$ such that $r_{n_k}<\tilde{t}-\varepsilon$, we have $t_{n_k} \in (\tilde{t}-\varepsilon, \tilde{t}+\varepsilon)$; consequently, for $m$ sufficiently large, $t_{n_k, m} \in (\tilde{t}-\varepsilon, \tilde{t}+\varepsilon)$. Then we have
\begin{equation}
0<\tilde{u}(r_{n_k}) \le \inf_{t \in (\tilde{t}-\varepsilon, \tilde{t}+\varepsilon)}u(t) \le u(t_{n_k,m}) \to \tilde{u}(r_{n_k}), \ \ \ m \to +\infty.
\end{equation}
Here $\tilde{t}$ is fixed, meaning that $\tilde{u}(r_{n_k})$ is a positive constant independent of $k$, contradicting the fact that $\tilde{u}(t)=o(t)$! Thus the Claim holds.

Now assume without loss of generality that $t_n$ is monotonically decreasing to $0$.
On $[t_{n+1}, t_n]$, by the monotonicity of $\tilde{u}$, we clearly have
\begin{equation}
\tilde{u}(r_{n+1}) \le \tilde{u}(t_{n+1}).
\end{equation}
Moreover, at $t_n$, we can show that
\begin{equation}
\tilde{u}(r_n) =\tilde{u}(t_n-).
\end{equation}
In fact, if $t_n=r_n$, then since $r_n$ is a continuity point of $\tilde{u}$, (2.3) holds trivially; if $r_n<t_n$, then this means that the infimum of $u$ on $[r_n, \mathrm{diam}(\Omega)]$ is attained in arbitrarily small neighborhoods of $t_n$, that is, for every $\varepsilon>0$,
$$
\tilde{u}(r_n)=\tilde{u}(t_n-\varepsilon),
$$
so (2.3) still holds.

Next, consider the sequence of points $\{ (t_n, \tilde{u}(r_n)) \}$. We wish to construct a continuous increasing function that "connects" these points. More precisely, we construct a continuous increasing function $s(r)$ on $[t_{n+1}, t_n]$ such that
$$
s(r) \le \tilde{u}(r), \ \ \ s(r_n)=\tilde{u}(r_n), \ \ \ s(r_{n+1})=\tilde{u}(r_{n+1}).
$$
By (2.2) and (2.3), the lemma shows that this is possible. Patching together the definitions of $s$ on each interval, we obtain a continuous increasing function on $(0, t_0)$.

Finally, set
$$
h(r)=r s(r).
$$
Clearly $h(r) \le r \tilde{u}(r) \le h_E(r)$, so $E$ satisfies condition $(U)_h$. Moreover, $s(r)$ is continuous and increasing. Now suppose there is another continuous function $f(r)$ such that $f(r)>h(r)$ for every $r \in (0,t_1)$. Then in particular,
$$
f(t_n)>t_n \tilde{u}(r_n)=t_n\tilde{u}(t_n-).
$$
By continuity of $\frac{f(t)}{t}$, in a sufficiently small left-neighborhood $(t_n-\varepsilon_n, t_n]$ of each $t_n$, one can find $t_n'$ such that
$$f(t_n')>t_n' \tilde{u}(t_n').$$
By the choice of $t_n$, we have
$$
\tilde{u}(t_n')=\inf \{ u(t): t_n' \le t < \mathrm{diam}(\Omega)  \} =\inf \{ u(t): t_n' \le t <t_n+\varepsilon_n \},
$$
so there exists some $t_n'' \in (t_n-\varepsilon_n, t_n+\varepsilon_n)$ such that
$$
f(t_n'')>h_E(t_n'').
$$
Combining this with Proposition 2.1 (2), we conclude that $E$ cannot satisfy condition $(U)_f$.
\end{proof}

\subsection{Weak uniform perfectness and logarithmic capacity}

An important property of uniform perfectness is that it can be equivalently characterized by logarithmic capacity (\cite{Pommerenke}, Theorem 1). There is an analogous characterization for $h$-uniform perfectness. We say that a closed set $E \subset \mathbb{C}$ satisfies condition $(C)_h$ if there exists a constant $r_0>0$ such that for every $a \in E$ and $r \in (0,r_0)$,
$$
\mathrm{Cap}(E(a,r))\ge h(r),
$$
where $E(a,r):=\overline{D(a,r)} \setminus \Omega$, and $h$ is a monotonically increasing function on $(0,r_0)$ satisfying $h(r)<r$. In particular, if there exists a constant $C>0$ such that $h(t)$ is of the form $Ct^{\alpha}$ or $Ct\left(\log \frac{1}{t}\right)^{-\beta}$, we say respectively that $E$ satisfies condition $(C)_{1,\alpha}$ or $(C)_{2,\beta}$, where $\alpha>1$ and $\beta>0$. We also define
$$
g_{E}(r):= \inf_{a \in E} \mathrm{Cap}(E \cap \overline{D(a,r)}),
$$
then clearly $E$ always satisfies condition $(C)_{g_E}$. The following property is rather obvious.

\begin{proposition}\label{prop2.5}
Let $E\subset \mathbb{C}$ be compact. If there exists $t_0 \in (0, \mathrm{diam}(E)]$ such that $g_E(t_0) = 0$, then $E$ must be locally polar in the sense that there exist $a \in E$ and $r >0$ such that 
$$
\mathrm{Cap}(E \cap \overline{D(a,r)})=0.
$$
\end{proposition}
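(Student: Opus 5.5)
The statement is essentially definitional, so the plan is to unwind it: take $a\in E$ with $g_E(t_0)=0$ realized in the limit, and then use compactness together with the monotonicity of capacity under inclusion. By the definition of $g_E$ as an infimum, there is a sequence $a_n\in E$ with
$$
\mathrm{Cap}\big(E\cap \overline{D(a_n,t_0)}\big)\to 0 .
$$
Since $E$ is compact, we may pass to a subsequence and assume $a_n\to a\in E$.

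We then set $r=t_0/2$. For $n$ large we have $|a_n-a|<t_0/2$, so $\overline{D(a,r)}\subset \overline{D(a_n,t_0)}$, and therefore
$$
E\cap\overline{D(a,r)}\subset E\cap\overline{D(a_n,t_0)}.
$$
Logarithmic capacity is monotone under inclusion of compact sets, which gives
$$
\mathrm{Cap}\big(E\cap\overline{D(a,r)}\big)\le \mathrm{Cap}\big(E\cap\overline{D(a_n,t_0)}\big)
$$
for every such $n$. Letting $n\to\infty$ shows that the left side is $0$, and this is exactly the local polarity claimed.

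The only point needing care is that the infimum defining $g_E(t_0)$ need not be attained. The argument above avoids this by working with a minimizing sequence and shrinking the radius, so no upper semicontinuity of capacity is needed. Nothing beyond monotonicity of $\mathrm{Cap}$ is used. I do not expect a genuine obstacle; the main thing to check is the containment of discs for large $n$.
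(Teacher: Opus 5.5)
Your proof is correct and is essentially the paper's argument: you take a minimizing sequence $a_n$ for $g_E(t_0)$, extract a limit point $a\in E$ by compactness, set $r=t_0/2$, use the inclusion $E\cap\overline{D(a,r)}\subset E\cap\overline{D(a_n,t_0)}$ for large $n$, and conclude by monotonicity of capacity. The paper's preliminary remark that $g_E\equiv 0$ on $(0,t_0]$ is not needed for this, and you rightly omit it.
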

\begin{proof}
Since $g_E \ge 0$ is monotonically increasing, the above assumption implies that
$$
g_E(t) \equiv 0, \quad t \in (0,t_0].
$$
Take $r=\frac{t_0}{2}$; then $g_E(2r)=g_E(t_0)=0$. Hence, for every $n \in \mathbb{N}$, there exists $a_n \in E$ such that
$$
0\le \mathrm{Cap}(E \cap \overline{D(a_n,2r)})<\frac{1}{n}.
$$
Since $E$ is compact, the sequence $\{ a_n \}$ has an accumulation point; assume without loss of generality that $a_n \to a \in E$. Then for $n$ sufficiently large,
$$
E \cap \overline{D(a, r)} \subset E \cap \overline{D(a_n, 2r)},
$$
so
$$
0 \le \mathrm{Cap}\left(E \cap \overline{D(a, r)} \right) \le \mathrm{Cap}\left( E \cap \overline{D(a_n, 2r)} \right) <\frac{1}{n}.
$$
Letting $n \to \infty$, we obtain
$$
\mathrm{Cap}\left(E \cap \overline{D(a, r)} \right)=0.
$$
This completes the proof.
\end{proof}

In this paper we are mainly concerned with the case where $E$ is the boundary of some domain $\Omega$. If some local part of $E$ is polar, then $K_{\Omega}(w), K^{(1)}_{\Omega}(w)$ and $b_{\Omega}(w)$ are at least bounded in that local part, which is a trivial case. Hence in what follows we shall always assume that $g_E(t)>0$.

Concerning the relationship between conditions $(C)_h$ and $(U)_h$, we made a mistake in the proof of Theorem 1.3 in \cite{XiongZheng}: the definition of the mapping $\omega$ was problematic, and this error led to an incorrect estimate of the diameter of $E_k$. More precisely, we neglected the definition of $\phi_0(a)$. In fact, the sequence $\{s_k\}$ should start from $s_0$, with
$$
\phi_0(a) \in \partial \Omega \cap \{  z:  5 s_1 \le  |z-a| \le s_0  \}.
$$
Then one should have $E_k \subset \overline{D(a,2s_0)} \setminus \Omega$, so that (4.6) in \cite{XiongZheng} should be corrected to
$$
\log \mathrm{Cap} \left(  \overline{D(a,2s_0)} \setminus \Omega   \right) \ge \sum_{l=0}^{\infty} \frac{ \log s_{l+1}}{2^{l+1}}.
$$
In particular, after correcting the above error, one obtains $(U)_{1,\alpha} \Rightarrow (C)_{1, \frac{\alpha}{2-\alpha}}$ and $(U)_{2,\beta} \Rightarrow (C)_{2,2\beta}$, where $\alpha \in (1,2)$ and $\beta>0$. For convenience in what follows, we restate the theorem as follows:

\begin{theorem}[\cite{XiongZheng}] \label{thm2.6}
Let $E \subset \mathbb{C}$ be a closed set, and let $h$ be a monotonically increasing function on $(0,1)$ satisfying $0 \le h(t)< t$.

$(1)$ If $E$ satisfies condition $(C)_{h}$, then it necessarily satisfies condition $(U)_{h}$.

$(2)$ If $E$ satisfies condition $(U)_{h}$, then it necessarily satisfies condition $(C)_{g}$, where
$$g(r)=\exp \left( \sum_{k=1}^{\infty}\frac{\log \tilde{h}^{\circ k}\left(  \frac{r}{2}   \right)}{2^k}   \right),
$$
and $\tilde{h}(t)=\frac{1}{5}h(t)$. In particular,  $(U)_{1,\alpha} \Rightarrow (C)_{1, \frac{\alpha}{2-\alpha}}$ and $(U)_{2,\beta} \Rightarrow (C)_{2,2\beta}$, where $\alpha \in (1,2)$ and $\beta>0$. 
\end{theorem}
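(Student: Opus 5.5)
For part (1), I would argue by contradiction with the capacity of a disc. Suppose $E=\partial\Omega$ satisfies $(C)_h$ but, for some $a\in E$ and small $r$, the closed annulus $\{h(r)\le |z-a|\le r\}$ misses $E$. The annulus is connected and contains no boundary point, so it lies either entirely in $\Omega$ or entirely in the interior of $\Omega^c$.

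In the second case, $\Omega\cap D(a,r)\subset D(a,h(r))$. This set is nonempty because $a\in\partial\Omega$. Since $\Omega$ also meets the complement of $\overline{D(a,r)}$ for $r$ small, $\Omega$ would be disconnected, which is impossible.

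So the annulus lies in $\Omega$. Then the compact set $E(a,r)=\overline{D(a,r)}\setminus\Omega$ is contained in the open disc $D(a,h(r))$, hence in a closed disc $\overline{D(a,\rho)}$ with $\rho<h(r)$. By monotonicity of capacity and $\mathrm{Cap}(\overline{D(a,\rho)})=\rho$, we get $\mathrm{Cap}(E(a,r))\le\rho<h(r)$, contradicting $(C)_h$.

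For part (2), I would follow Pommerenke's Cantor-set construction. Fix $a\in E$ and $r$ small, and set $s_0=r/2$ and $s_k=\tilde h^{\circ k}(s_0)$. Since $\tilde h(t)\le t/5$, we have $s_{k+1}\le s_k/5$, so the scales decay at least geometrically.

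I would build a binary tree of points of $E$ inductively. Start with $\phi_0(a)\in E$ satisfying $5s_1\le|\phi_0(a)-a|\le s_0$, which exists by $(U)_h$ applied at scale $s_0$ (note $5s_1=h(s_0)$). Each level-$k$ point $x$ then has two children at level $k+1$: $x$ itself, and a point $y\in E$ with $5s_{k+2}\le|y-x|\le s_{k+1}$, again given by $(U)_h$ at scale $s_{k+1}$.

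Descendants of a level-$k$ point $x$ stay within $\sum_{j\ge k+1}s_j\le \tfrac54 s_{k+1}$ of $x$. The limit set $K$ is closed because it is the intersection of the closed sets formed by the discs around each generation. Hence $K\subset E\cap\overline{D(a,2s_0)}$.

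The key geometric claim concerns two points $z,w\in K$ whose branches separate at level $l+1$. Their separation is at least $5s_{l+1}$ minus the drift of the descendants on each side, so
\[
|z-w|\ \ge\ 5s_{l+1}-2\cdot\tfrac54 s_{l+2}\ \gtrsim\ s_{l+1}.
\]
I would make the constants in this step exact; the factor $5$ in $\tilde h$ is chosen precisely so that the margin works out.

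Now put on $K$ the natural measure $\mu$ that gives mass $2^{-k}$ to each level-$k$ cluster. The $\mu\times\mu$-mass of pairs that separate at level $l+1$ is $2^{-(l+1)}$. Therefore
\[
I(\mu)=\iint\log\frac{1}{|z-w|}\,d\mu\,d\mu\ \le\ \sum_{l\ge 0}2^{-(l+1)}\log\frac1{s_{l+1}}+O(1).
\]
Since $\log\mathrm{Cap}(K)\ge -I(\mu)$, this yields
\[
\log\mathrm{Cap}(E(a,2s_0))\ \ge\ \sum_{k\ge1}2^{-k}\log\tilde h^{\circ k}(r/2),
\]
up to the harmless radius normalization (from $2s_0$ back to $r$) and the $O(1)$ term. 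This is condition $(C)_g$.

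The special cases are then direct computations.
\begin{itemize}
\item For $h=Ct^\alpha$ we have $\log\tilde h^{\circ k}(t)\approx\alpha^k\log t$. The series $\sum_k(\alpha/2)^k$ converges for $\alpha<2$, and summing it gives exponent $\alpha/(2-\alpha)$.
\item For $h_{2,\beta}$ we have $\log\tilde h^{\circ k}(t)\approx\log t-k\beta\log\log(1/t)-O(k)$. Summing the series gives $t(\log\tfrac1t)^{-2\beta}$, up to a constant.
\end{itemize}

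I expect the main obstacle to be the separation estimate and the bookkeeping that the whole tree stays inside $\overline{D(a,2s_0)}$. This is exactly where the original argument erred in choosing $\phi_0(a)$. I would handle it by starting the sequence at $s_0$, as in the corrected version.
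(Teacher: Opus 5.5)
Your proposal is correct and follows the paper's route. Part (1) is the same compactness argument, comparing $E(a,r)$ with a closed disc of radius $<h(r)$; your connectedness step only makes explicit why the annulus lies in $\Omega$ when $(C)_h$ is read with $\overline{D(a,r)}\setminus\Omega$. Part (2) is exactly the corrected Pommerenke-type Cantor construction started at $s_0$ with $5s_1\le|\phi_0(a)-a|\le s_0$, and your energy estimate for the natural measure gives the paper's bound $\log\mathrm{Cap}(\overline{D(a,2s_0)}\setminus\Omega)\ge\sum_{l\ge0}2^{-(l+1)}\log s_{l+1}$.

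There is one index slip in the separation estimate. The two branches carrying $\mu\times\mu$-mass $2^{-(l+1)}$ start at distance $\ge 5s_{l+1}$, and each drifts by at most $\sum_{j\ge l+1}s_j\le\frac54 s_{l+1}$, not $\frac54 s_{l+2}$. So the correct bound is $|z-w|\ge\frac52 s_{l+1}$. This makes your $O(1)$ term equal to $+\log\frac52\ge 0$, so you get $(C)_g$ exactly, with no loss of constant.
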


Here (2) follows by correcting the proof in \cite{XiongZheng} as described above. We briefly sketch the proof of (1).

\begin{proof}[Proof of Theorem \ref{thm2.6} $(1)$]
Suppose on the contrary that condition $(U)_{h}$ fails. Then, for any $r_0>0$, there exist $a \in E$ and $r \in (0,r_0)$ such that
\[
 \{ z\in \mathbb{C}: {h}(r)  \leq |z-a| \leq r \} \subset E^c.
\]
Since $\{ z\in \mathbb{C}: {h}(r)  \leq |z-a| \leq r \}$ is compact, there exists some $\varepsilon>0$ such that
\[
\{   z\in \mathbb{C}: {h}(r)-\varepsilon  \leq |z-a| \leq r+\varepsilon  \} \subset E^c.
\]
Thus, $E \cap \overline{D(a,r)} \subset \overline{D(a, h(r)-\varepsilon)}$, which implies
\[
h(r) \le \mathrm{Cap}(E \cap \overline{D(a,r)})<\mathrm{Cap}(\overline{D(a, h(r)-\varepsilon)})=h(r)-\varepsilon.
\]
But this is a contradiction, since $E$ satisfies condition $(C)_h$.
\end{proof}
Note that when $t\ll 1$, the series
$$\sum_{k=1}^{\infty} \frac{ \log \tilde{h}^{\circ k}\left( t \right)}{2^{k}}$$
is a series of negative terms and converges in the generalized sense, so the function $g$ is always well-defined. It is easy to see from the computation that if $h(t) \lesssim t^2$, then the corresponding $g(t)$ vanishes identically. Moreover, the functions $g$ and $\tilde{h}$ satisfy the following quantitative relation:
\begin{eqnarray*}
g \left( 2\tilde{h}\left( t \right)\right)&=& \exp \left( \sum_{k=1}^{\infty} \frac{ \log \tilde{h}^{\circ (k+1)}\left( t \right)}{2^{k}}  \right) \\
&=& \exp \left( 2\sum_{k=1}^{\infty} \frac{ \log \tilde{h}^{\circ k}\left( t  \right)}{2^{k}}-\log \tilde{h}\left( t \right)  \right) \\
&=& \frac{g(2t)^2}{\tilde{h}(t)}.
\end{eqnarray*}
That is,
$$
\tilde{h}\left( t \right)\cdot g(2\tilde{h}\left( t \right))=g(2t)^2.
$$

\section{Computation of the Bergman metric on an annulus}

In this section, we estimate the value of the Bergman metric at $z=1$ on the annulus $A_R:=\{ \zeta \in \mathbb{C}; \frac{1}{R} <|\zeta|< R   \}$, where $R$ is sufficiently large. First, it is easy to verify that $A^2(\Omega)$ has an orthonormal basis $\{  \phi_n(z)  \}_{n=-\infty}^{\infty}$, where
$$
\phi_{-1}(z)=\frac{1}{\sqrt{4\pi \log R}}\cdot z^{-1}, \,\,\,\phi_{n}(z)=\sqrt{\frac{n+1}{\pi (R^{2n+2}-R^{-2n-2})}}\cdot z^{n}\,\,\, (n \ne -1).
$$
Thus it is readily seen that the Bergman kernel on $A_R$ can be expressed as
$$
K_{A_R}(z)=\frac{1}{4\pi |z|^2 \log R}+\frac{1}{\pi |z|^2}\sum_{n=1}^{\infty}\frac{n(|z|^{2n}+|z|^{-2n})}{R^{2n}-R^{-2n}}.
$$
It depends only on $|z|$, so we may write $K(|z|):=K_{A_R}(z)$. Also set
$$
S(|z|)=\sum_{n=1}^{\infty}\frac{n(|z|^{2n}+|z|^{-2n})}{R^{2n}-R^{-2n}}.
$$
A direct computation gives
\begin{eqnarray}\label{chapter3, 3.1}
K(1)&=&\frac{1}{4\pi \log R}+\frac{1}{\pi}\sum_{n=1}^{\infty} \frac{2n}{R^{2n}-R^{-2n}}=\frac{1}{4\pi \log R}+O(R^{-2}),\\
\notag K(r)&=&\frac{1}{\pi} \left( \frac{1}{4r^2\log R}+\frac{1}{r^2}S(r)    \right),\\
\notag K'(r)&=&\frac{1}{\pi} \left( -\frac{1}{2r^3\log R}-\frac{2}{r^3}S(r)+\frac{1}{r^2}S'(r)        \right),\\
\notag K''(r)&=&\frac{1}{\pi} \left(  \frac{3}{2r^4 \log R}+\frac{6}{r^4}S(r)-\frac{4}{r^3}S'(r)+\frac{1}{r^2}S''(r)   \right),
\end{eqnarray}
where
\begin{eqnarray*}
S'(r)&=&2\sum_{n=1}^{\infty} \frac{n^2(r^{2n-1}-r^{-2n-1})}{R^{2n}-R^{-2n}}, \\
S''(r)&=&2\sum_{n=1}^{\infty} \frac{ n^2\left( (2n-1)r^{2n-2}+(2n+1)r^{-2n-2}     \right) }{R^{2n}-R^{-2n}}.
\end{eqnarray*}
In particular,
\begin{eqnarray*}
S(1)&=&\sum_{n=1}^{\infty}\frac{2n}{R^{2n}-R^{-2n}}=\frac{2}{R^2}+O(R^{-4}),\\
S'(1)&=&0,\\
S''(1)&=&8\sum_{n=1}^{\infty}\frac{n^3}{R^{2n}-R^{-2n}}=\frac{8}{R^2}+O(R^{-4}).
\end{eqnarray*}
Consequently,
\begin{equation}\label{chapter3, 3.2}
K'(1)=-\frac{1}{2\pi \log R}-\frac{4}{\pi R^2}+O(R^{-4}),
\end{equation}
\begin{equation}\label{chapter3, 3.3}
K''(1)=\frac{3}{2\pi \log R}+\frac{20}{\pi R^2}+O(R^{-4}).
\end{equation}

Next, we wish to compute the Bergman metric $b_{A_R}(z)^2 \mathrm{d}z \otimes \mathrm{d}\overline{z}$. Note that the map $T: z \to \frac{1}{z}$ is a holomorphic automorphism of $A_R$ onto itself. Therefore, by the transformation property of the Bergman kernel,
$$
K_{A_R}\left(\frac{1}{z}\right)\cdot \left|\frac{-1}{z^2}\right|^2=K_{A_R}(z),
$$
that is,
\begin{equation}\label{chapter3, 3.4}
K_{A_R}\left(\frac{1}{z}\right)=|z|^4K_{A_R}(z).
\end{equation}
Write
$$
f(r)=\log K(r), \quad r=|z|.
$$
Then by the definition of the Bergman metric, using the polar coordinate expression of the Laplacian and noting that $f$ depends only on the radial variable, we have
$$
b_{A_R}(z)^2=\frac{\partial^2}{\partial z \partial \overline{z}} \log K_{A_R}(z) =\frac{1}{4}\Delta f(r)=\frac{1}{4}\left( f''(r)+\frac{1}{r}f'(r) \right).
$$
First, taking logarithms on both sides of (\ref{chapter3, 3.4}), we obtain
$$
f\left(\frac{1}{r}\right)=4\log r +f(r).
$$
Differentiating both sides gives
$$
f'\left( \frac{1}{r} \right)\cdot \left( -\frac{1}{r^2} \right)=\frac{4}{r}+f'(r).
$$
Setting $r=1$, we get $f'(1)=-2$, hence
\begin{equation}\label{chapter3, 3.5}
b_{A_R}(1)^2=\frac{1}{4} \left( f''(1)-2  \right).
\end{equation}
Now we compute $f''(1)$. Since $f(r)= \log K(r)$, we have
$$
f''(r)=\left( \frac{K'(r)}{K(r)}  \right)'=\frac{K''(r)K(r)-K'(r)^2}{K(r)^2}.
$$
Substituting (\ref{chapter3, 3.1}), (\ref{chapter3, 3.2}), and (\ref{chapter3, 3.3}) into the above, a straightforward calculation yields
\begin{equation*}
f''(1)= 2+O\left(\frac{\log R}{R^2}\right).
\end{equation*}
Plugging this into (\ref{chapter3, 3.5}), there exists a numerical constant $C_0$ such that
\begin{equation}\label{chapter3, 3.6}
b_{A_R}(1)^2=O\left( \frac{\log R}{R^2}    \right),\,\,\,K^{(1)}_{A_{R}}(1)=b_{A_R}(1)^2\cdot K_{A_R}(1)=O \left( \frac{1}{R^2} \right).
\end{equation}
The constants implicit in the above $O$-notation are absolute numerical constants.

\section{Proofs of Theorem \ref{K1equiv} and Theorem \ref{upperboundK1}}

\begin{proof}[Proof of Theorem \ref{K1equiv}]
If $\partial \Omega$ is uniformly perfect, then by \cite{Chen2013} we have
$$
K_{\Omega}(w) \asymp \delta_{\Omega}(w)^{-2},\,\,\,b_{\Omega}(w) \asymp \delta_{\Omega}(w)^{-1},
$$
and hence obviously
$$
K^{(1)}_{\Omega}(w)=b_{\Omega}(w)^2\cdot K_{\Omega}(w) \asymp \delta_{\Omega}(w)^{-4}.
$$

For the converse direction, we argue by contradiction. Suppose that the boundary of the domain $\Omega \subset \mathbb{C}$ is not uniformly perfect. Then for any $C\in (0,1)$, there exist a point $a \in \partial \Omega$ and $r \in (0,\mathrm{diam}(\Omega))$ such that
$$
\partial \Omega \cap \tilde{A}=\emptyset,
$$
where
$$
\tilde{A}=\{ z\in \mathbb{C}: Cr < |z-a| < r  \}.
$$
It is clear that as long as $C<\frac{1}{2}$, we must have $\tilde{A}\subset \Omega$. In particular, take a sequence $C_n \to 0$; then there corresponds a sequence of annuli
$$
\tilde{A}_n=\{ z\in \mathbb{C}: C_nr_n <|z-a_n|<r_n  \} \subset \Omega.
$$
Take $z_n \in \tilde{A_n}$ such that $|z_n-a_n|=\sqrt{C_n}r_n$. Then it is geometrically evident that
\begin{equation} \label{chapter4, 4.1}
\frac{\sqrt{C_n}r_n}{2} \le \left(\sqrt{C_n}-C_n \right)r_n= \delta_{\tilde{A}_n}(z_n)\le \delta_{\Omega}(z_n) \le \sqrt{C_n}r_n.
\end{equation}
Consider the conformal mapping
$$
T: \tilde{A}_n \to A_{R_n}, \quad z \mapsto \frac{z-a_n}{\sqrt{C_n}r_n},
$$
where $R_n=\frac{1}{\sqrt{C_n}}$ and $A_{R_n}=\{ z\in \mathbb{C}: \frac{1}{R_n}<|z|<R_n  \}$. By the conformal invariance of the Bergman metric, together with (\ref{chapter3, 3.6}) and (\ref{chapter4, 4.1}), we obtain
\begin{eqnarray*}
K_{\Omega}^{(1)}(z_n)&\le& K^{(1)}_{\tilde{A}_n}(z_n)\\
&=&b_{\tilde{A}_n}(z_n)^2 \cdot K_{\tilde{A}_n}(z_n)\\
&=&K^{(1)}_{A_{R_n}}(T(z_n))|T'(z_n)|^4\\
&=& \frac{1}{C_n^2r_n^4}\cdot O\left(\frac{1}{R_n^2}\right)\\
&\le & O(C_n)\cdot \delta_{\Omega}(z_n)^{-4}
\end{eqnarray*}
Since $O(C_n) \to 0$ as $n \to \infty$, this contradicts $K_{\Omega}^{(1)}(w) \asymp \delta_{\Omega}(w)^{-4}$.
\end{proof}

\begin{proof}[Proof of Theorem \ref{upperboundK1}]
Suppose that for the domain $\Omega$, for every $\tilde{r}>0$, there exist $a \in \partial \Omega$ and $r \in (0,\tilde{r})$ such that
$$
\partial \Omega \cap \tilde{A}=\emptyset,
$$
where
$$
\tilde{A}=\{z \in \mathbb{C}: h(r)<|z-a|<r \}.
$$
Since $\tilde{r}$ can be chosen arbitrarily small, we always have $\tilde{A} \subset \Omega$ in that case. In particular, taking a sequence $\tilde{r}_n \to 0$, there exists a sequence of annuli
$$
\tilde{A}_n=\{z \in \mathbb{C}: h(r_n)<|z-a_n|<r_n \} \subset \Omega,
$$
with $0<r_n<\tilde{r}_n$.

Let $z_n \in \tilde{A_n}$ be such that $|z_n-a_n|=\sqrt{r_nh(r_n)}$. Then
\begin{equation}\label{chapter4, 4.2}
\frac{\sqrt{r_nh(r_n)}}{2} \le \left(\sqrt{r_nh(r_n)}-h(r_n) \right)= \delta_{\tilde{A}_n}(z_n)\le \delta_{\Omega}(z_n) \le \sqrt{r_n h(r_n)}.
\end{equation}
Consider the conformal mapping
$$
T: \tilde{A}_n \to A_{R_n}, \quad z \mapsto \frac{z-a_n}{\sqrt{r_nh(r_n)}},
$$
where $R_n=\sqrt{\frac{r_n}{h(r_n)}}$ and $A_{R_n}=\{ z\in \mathbb{C}: \frac{1}{R_n}<|z|<R_n  \}$. Then combining (\ref{chapter3, 3.6}) and (\ref{chapter4, 4.2}), we have
\begin{eqnarray}
\notag K^{(1)}_{\Omega}(z_n)  &\le& K^{(1)}_{\tilde{A}_n}(z_n)\\
\notag &=& K^{(1)}_{A_{R_n}}(T(z_n))|T'(z_n)|^4 \\
\notag &=& O\left( \frac{h(r_n)}{r_n} \cdot \frac{1}{r_n^2h(r_n)^2}   \right)\\
&=& O\left( \frac{1}{r_n^3h(r_n)} \right). \label{chapter4, 4.3}
\end{eqnarray}

$(1)$ If $h(t)=Ct^{\alpha}$ with $\alpha>1$, then
$$
K^{(1)}_{\Omega}(z_n)= O \left( \frac{1}{Cr_n^{\alpha+3}}  \right).
$$
In this case $\delta_{\Omega}(z_n) \asymp \sqrt{r_n h(r_n)}= C^{\frac{1}{2}}r_n^{\frac{\alpha+1}{2}}$. Substituting this into (\ref{chapter4, 4.3}) yields
$$
K^{(1)}_{\Omega}(z_n) = O\left( C^{\frac{\alpha+3}{\alpha+1}-1} \delta_{\Omega}(z_n)^{-\frac{2}{\alpha+1}\cdot (\alpha+3)}   \right)=O\left( C^{\frac{2}{\alpha+1}}\delta_{\Omega}(z_n)^{ -2-\frac{4}{\alpha+1}  } \right).
$$

$(2)$ If $h(t)=Ct(\log \frac{1}{t})^{-\beta}$ with $\beta>0$, then
$$
K^{(1)}_{\Omega}(z_n)=O\left( C^{-1}r_n^{-4}{\left(\log \frac{1}{r_n}\right)^{\beta}}  \right).
$$
Note that
$$
\frac{\sqrt{r_nh(r_n)}}{2}\le \delta_{\Omega}(z_n)\le \sqrt{r_nh(r_n)}=\sqrt{C}r_n\left(\log \frac{1}{r_n} \right)^{-\frac{\beta}{2}},
$$
so for $n$ sufficiently large we have
\begin{equation} \label{chapter4, 4.4}
C^2\delta_{\Omega}(z_n)^{-4} \asymp r_n^{-4}\left( \log \frac{1}{r_n} \right)^{2\beta},\,\,\,\log \frac{1}{\delta_{\Omega}(z_n)}\asymp \log \frac{1}{r_n}.
\end{equation}
Combining (\ref{chapter4, 4.3}) and (\ref{chapter4, 4.4}), we obtain
$$
K^{(1)}_{\Omega}(z_n)=O\left(C \delta_{\Omega}(z_n)^{-4} \left( \log \frac{1}{\delta_{\Omega}(z_n)} \right)^{-\beta}  \right).
$$
The constants implicit in the above $O$-notation are absolute numerical constants.
\end{proof}

\begin{proof}[Proof of Corollary \ref{corollary}]
We only prove $(1)$; the proof of $(2)$ is analogous. By the hypothesis, there exists a constant $M>0$ such that
\begin{equation}\label{chapter4, 4.5}
K_{\Omega}^{(1)}(w) \ge M \delta_{\Omega}(w)^{-2-\frac{4}{\alpha+1}}.
\end{equation}
On the other hand, suppose that $\partial \Omega$ does not satisfy condition $(U)_{1,\alpha}$. Then for every $C \in (0,1)$ and every $r_0>0$, there exist $a \in \partial \Omega$ and $r<r_0$ such that
$$
\{ z \in \mathbb{C}:  Ch_{1,\alpha}(r) < |z-a|<r   \} \cap \partial \Omega = \emptyset.
$$
As long as $r$ is chosen sufficiently small, we necessarily have 
$$\{ z \in \mathbb{C}:  Ch_{1,\alpha}(r) < |z-a|<r   \} \subset \Omega.$$ 
In particular, setting $r_0=\frac{1}{n}$, we obtain
$$
A_n=\{ z \in \mathbb{C}:  Ch_{1,\alpha}(r_n) < |z-a_n|<r_n   \} \subset \Omega.
$$
Therefore, by Theorem 1.3 together with (\ref{chapter4, 4.5}), there exists a sequence $\{ w_n \}\subset \Omega$ such that
$$
M\delta_{\Omega}(w_n)^{-2-\frac{4}{\alpha+1}} \le K_{\Omega}^{(1)}(w_n) \le C_0\cdot C^{\frac{2}{\alpha+1}}\delta_{\Omega}(w_n)^{-2-\frac{4}{\alpha+1}}.
$$
That is, for every $C \in (0,1)$,
$$
M \le C_0\cdot C^{\frac{2}{\alpha+1}},
$$
which is impossible.
\end{proof}

\section{Proof of Theorem \ref{lowerboundK1}}

Let $E\subset\mathbb{C}$ be a non-polar compact subset and let $\mu_E$ be its equilibrium measure. Following Zwonek \cite{Zwonek} and Pflug-Zwonek \cite{PflugZwonek}, we set
\begin{equation}
f_E(w):=\int_E\frac{d\mu_E(\zeta)}{w-\zeta}.
\end{equation}
It follows that $f_E$ is a holomorphic function on $\mathbb{C}\setminus{E}$. A simple dilatation of Lemma 2 in \cite{PflugZwonek} gives the following.

\begin{lemma}[cf. \cite{PflugZwonek, XiongZheng}]\label{lemma5.1}
If $E$ is a non-polar compact subset in $D(0,r)$, then 
$$
\int_{D(0,r)\setminus{E}}|f_E|^2 \lesssim \log \frac{4r}{\mathrm{Cap}(E)}.
$$
\end{lemma}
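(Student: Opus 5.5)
By dilation we may assume $r=1$. Indeed, if $E\subset D(0,r)$ and $E'=E/r$, then the equilibrium measure of $E'$ is the push-forward of $\mu_E$, so $f_{E'}(w)=r f_E(rw)$. Hence $\int_{D(0,1)\setminus E'}|f_{E'}|^2=\int_{D(0,r)\setminus E}|f_E|^2$. Since $\mathrm{Cap}(E')=\mathrm{Cap}(E)/r$, the right-hand side $\log\frac{4r}{\mathrm{Cap}(E)}$ is also invariant. So it suffices to show $\int_{D(0,1)\setminus E}|f_E|^2\lesssim \log\frac{4}{\mathrm{Cap}(E)}$ for $E\subset D(0,1)$. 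Note that $\mathrm{Cap}(E)\le 1$ here, so the right-hand side is at least $\log 4>0$.

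The key observation is that $f_E$ is a gradient. Let $p(w)=\int_E\log|w-\zeta|\,d\mu_E(\zeta)$ be the logarithmic potential. Then $\partial p/\partial w=\tfrac12 f_E$, so $|f_E|^2=4|\partial_w p|^2=|\nabla p|^2$ off $E$. Thus the integral is bounded by the Dirichlet integral of $p$ over $D(0,2)$. Set $u=p-\log\mathrm{Cap}(E)$. By Frostman's theorem, $u\ge 0$ on $\mathbb{C}$ and $u=0$ $\mu_E$-a.e.; moreover $\int u\,d\mu_E=0$ because the energy of $\mu_E$ equals $\log\mathrm{Cap}(E)$. Also $\Delta u=2\pi\mu_E$ in the sense of distributions.

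Integrating by parts on $D(0,2)$ gives
$$\int_{D(0,2)}|\nabla u|^2=\int_{|w|=2}u\,\partial_n u\,ds-2\pi\int u\,d\mu_E=\int_{|w|=2}u\,\partial_n u\,ds.$$
On $|w|=2$ we have $|\nabla u|\le\int_E\frac{d\mu_E(\zeta)}{|w-\zeta|}\le 1$, since $|w-\zeta|\ge 1$ there. We also have $0\le u\le\log 3-\log\mathrm{Cap}(E)$ there. Hence the boundary integral is at most $4\pi\bigl(\log 3+\log\frac{1}{\mathrm{Cap}(E)}\bigr)\lesssim\log\frac{4}{\mathrm{Cap}(E)}$, which gives the lemma.

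The main obstacle is justifying the Green identity when $E$ is irregular, since $p$ need not be continuous and $\mu_E$ may be singular. I would handle this by mollification. Replace $\mu_E$ by $\mu_\varepsilon=\mu_E*\chi_\varepsilon$ with potentials $p_\varepsilon=p*\chi_\varepsilon$, which are smooth. The identity $\int_{D(0,2)}|\nabla p_\varepsilon|^2=\int_{|w|=2}p_\varepsilon\partial_np_\varepsilon\,ds-2\pi\int p_\varepsilon\,d\mu_\varepsilon$ then holds classically. Since $\mu_E$ has finite energy, $\int p_\varepsilon\,d\mu_\varepsilon\to\int p\,d\mu_E=\log\mathrm{Cap}(E)$ (mutual-energy continuity under mollification). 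The boundary terms converge uniformly on $|w|=2$, and Fatou's lemma passes the Dirichlet integrals to the limit. This yields $\int_{D(0,2)\setminus E}|\nabla p|^2\le\int_{|w|=2}p\,\partial_n p\,ds-2\pi\log\mathrm{Cap}(E)$, which is exactly the bound used above.
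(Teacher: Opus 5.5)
Your proof is correct. The dilation step is the same as the paper's. The paper offers nothing beyond it: its whole justification is that the lemma is ``a simple dilatation'' of Lemma~2 of Pflug--Zwonek. In other words, it rescales to the normalized case and imports the estimate there.

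You instead prove the normalized estimate yourself. You observe that $|f_E|^2=|\nabla p|^2$ off $E$ for the equilibrium potential $p$, and compute the Dirichlet energy of $p$ on $D(0,2)$ with Green's formula and Frostman's theorem. This gives a self-contained argument with an explicit absolute constant,
\[
\int_{D(0,1)\setminus E}|f_E|^2\le\int_{D(0,2)\setminus E}|\nabla p|^2\le 4\pi\log 3+2\pi\log\frac{1}{\mathrm{Cap}(E)}\le 4\pi\log\frac{4}{\mathrm{Cap}(E)}.
\]
It also shows where $\log(1/\mathrm{Cap}(E))$ comes from: it is the energy term $-2\pi\int p\,d\mu_E$. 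The price is Frostman's theorem plus a regularization step for irregular $E$, both of which the paper avoids by citation.

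If you want to avoid even those, there is a more elementary route. Expand $|f_E(z)|^2\le\iint\frac{d\mu_E(\zeta)\,d\mu_E(\eta)}{|z-\zeta||z-\eta|}$ and apply Tonelli. Then use the elementary bound $\int_{D(0,1)}\frac{d\lambda(z)}{|z-\zeta||z-\eta|}\le 4\pi+2\pi\log\frac{4}{|\zeta-\eta|}$ for distinct $\zeta,\eta\in D(0,1)$. Since $I(\mu_E)=\log\mathrm{Cap}(E)$, this yields the bound $4\pi+2\pi\log\frac{4}{\mathrm{Cap}(E)}$ directly.

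Three small remarks on your regularization, none of which is a gap:
\begin{enumerate}
\item With a radial mollifier, subharmonicity gives $\log|\cdot|*\chi_\varepsilon*\chi_\varepsilon\ge\log|\cdot|$. Hence $\int p_\varepsilon\,d\mu_\varepsilon\ge\log\mathrm{Cap}(E)$ for every $\varepsilon$, and this is the only direction Fatou's lemma needs.
\item For $\varepsilon<1$, the boundary terms on $|w|=2$ coincide exactly with the unmollified ones, by the mean value property.
\item Your final step identifies $\int_{|w|=2}p\,\partial_n p\,ds-2\pi\log\mathrm{Cap}(E)$ with $\int_{|w|=2}u\,\partial_n u\,ds$. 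This silently uses the flux identity $\int_{|w|=2}\partial_n p\,ds=2\pi$. You can skip it and bound the expression directly, using $0\le p\le\log 3$ and $|\partial_n p|\le 1$ on $|w|=2$; that is how the displayed constant above arises.
\end{enumerate}
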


\begin{lemma}\label{lemma5.2}
Let $\Omega \subset \mathbb{C}$ be a domain, $w \in \Omega$, and $a \in \partial \Omega$. Set $E_n=\overline{D(a,r_n)}-\Omega$, where $r_n \le \frac{|w-a|}{n}$ for $n \in \mathbb{N}$. Let 
$$
f_n(z)=\int_{E_n}\frac{1}{z-\zeta} \mathrm{d}\mu_{E_n}(\zeta),
$$ 
where $\mu_{E_n}$ is the equilibrium measure of the compact set $E_n$. Then, as $n \to \infty$,
\begin{eqnarray*}
f_n(w)&=& \frac{1}{w-a}+O\left( \frac{1}{n|w-a|} \right),\\
f_n'(w)&=& -\frac{1}{(w-a)^2}+O\left( \frac{1}{n|w-a|^2} \right).
\end{eqnarray*}
The constants in the $O$-notation are absolute numerical constants.
\end{lemma}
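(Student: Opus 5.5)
The plan is to expand the Cauchy kernel $\frac{1}{z-\zeta}$ around $\zeta=a$ and use that $\mu_{E_n}$ is a probability measure supported in $\overline{D(a,r_n)}$. Write $\rho=|w-a|$. For $\zeta\in E_n\subset\overline{D(a,r_n)}$ we have $|\zeta-a|\le r_n\le \rho/n$. The point $w$ lies in $\Omega$, hence outside $E_n$. For $n\ge 2$ this gives
$$
|w-\zeta|\ge \rho-r_n\ge \rho\left(1-\tfrac1n\right)\ge \tfrac{\rho}{2},
$$
so $w$ is at a definite distance from the support. I assume $E_n$ is non-polar, so that $\mu_{E_n}$ exists; this is implicit in the statement. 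Then $\mu_{E_n}(E_n)=1$.

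The first estimate follows from the identity
$$
\frac{1}{w-\zeta}-\frac{1}{w-a}=\frac{\zeta-a}{(w-\zeta)(w-a)}.
$$
For $\zeta\in E_n$ its modulus is at most $\frac{r_n}{(\rho/2)\rho}\le \frac{2}{n\rho}$. Integrating against the probability measure $\mu_{E_n}$ gives
$$
f_n(w)=\frac{1}{w-a}+O\!\left(\frac{1}{n|w-a|}\right),
$$
with constant $2$.

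For the derivative, I differentiate under the integral sign, which is legitimate because the integrand is smooth and uniformly bounded near $w$ (the distance to $E_n$ is at least $\rho/2$). This gives $f_n'(w)=-\int_{E_n}\frac{d\mu_{E_n}(\zeta)}{(w-\zeta)^2}$. I then use
$$
\frac{1}{(w-\zeta)^2}-\frac{1}{(w-a)^2}=\frac{(\zeta-a)\,(2w-a-\zeta)}{(w-\zeta)^2(w-a)^2}.
$$
Here $|2w-a-\zeta|\le 2\rho+r_n\le 3\rho$, so the modulus is bounded by $\frac{r_n\cdot 3\rho}{(\rho^2/4)\rho^2}\le \frac{12}{n\rho^2}$. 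Integrating again against the probability measure yields the second formula, with absolute constant $12$.

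There is no real obstacle. The only points needing care are that $\mu_{E_n}$ has total mass one, which requires $E_n$ non-polar (otherwise $f_n$ is undefined), and that the lower bound on $|w-\zeta|$ is uniform in $n\ge 2$.
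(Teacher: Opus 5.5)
Your proof is correct and uses the same idea as the paper. Both compare the kernel $\frac{1}{w-\zeta}$ (resp. $\frac{1}{(w-\zeta)^2}$) with its value at $\zeta=a$ uniformly on $E_n\subset\overline{D(a,r_n)}$, then integrate against the probability measure $\mu_{E_n}$. The paper does this by splitting $\frac{1}{w-\zeta}$ into modulus and argument and bounding $\cos$ and $\sin$ of $\arg w-\arg(w-\zeta)$ via $\arcsin\frac1n$; your exact difference identities are more direct, give explicit constants ($2$ and $12$), and write out the derivative case, which the paper only calls ``similar''.
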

\begin{proof}
Assume without loss of generality that $a=0$. We compute
\begin{eqnarray}\label{chapter5, 5.1}
\notag f_n(w)&=&\int_{E_n} \frac{1}{w-\zeta} \mathrm{d}\mu_{E_n}(\zeta)\\
\notag &=&\int_{E_n} \frac{1}{|w-\zeta|e^{i\mathrm{arg}(w-\zeta)}} \mathrm{d}\mu_{E_n}(\zeta)\\
\notag &=& \frac{1}{e^{i\mathrm{arg}(w)}} \int_{E_n} \frac{1}{|w-\zeta|}{e^{i(\mathrm{arg}(w)-\mathrm{arg}(w-\zeta))}} \mathrm{d}\mu_{E_n}(\zeta)\\
\notag &=& \frac{1}{e^{i\mathrm{arg}(w)}}  \int_{E_n} \frac{1}{|w-\zeta|}{\cos{\left(\mathrm{arg}(w)-\mathrm{arg}(w-\zeta)\right)}} \mathrm{d}\mu_{E_n}(\zeta) \\
&&+ \frac{1}{e^{i\mathrm{arg}(w)}}  \int_{E_n} \frac{1}{|w-\zeta|}{i\sin{\left(\mathrm{arg}(w)-\mathrm{arg}(w-\zeta)\right)}} \mathrm{d}\mu_{E_n}(\zeta).
\end{eqnarray}
Since $r_n\le \frac{|w|}{n}$, we have $\mathrm{arg}(w)-\mathrm{arg}(w-\zeta)\in \left(-\arcsin \frac{1}{n}, \arcsin \frac{1}{n}\right)$, and hence
\begin{eqnarray}
\cos \left( \mathrm{arg}(w)-\mathrm{arg}(w-\zeta)    \right)&=&1+O\left(\frac{1}{n}\right), \label{chapter5, 5.2} \\
\sin \left(\mathrm{arg}(w)-\mathrm{arg}(w-\zeta) \right)&=&O\left( \frac{1}{n}  \right).\label{chapter5, 5.3}
\end{eqnarray}
Moreover, since $\zeta \in E_n \subset \overline{D(0,r_n)}$, for $n$ sufficiently large we have
\begin{equation}\label{chapter5, 5.4}
\frac{1}{|w-\zeta|}-\frac{1}{|w|}\le \frac{|\zeta|}{|w||w-\zeta|} =O\left( \frac{1}{n|w|} \right).
\end{equation}
The constants in these $O$-estimates are absolute numerical constants. Substituting (\ref{chapter5, 5.1}), (\ref{chapter5, 5.2}), (\ref{chapter5, 5.3}), and (\ref{chapter5, 5.4}) into the above yields
\begin{eqnarray*}
f_n(w)-\frac{1}{w} &=&\frac{1}{e^{i\mathrm{arg}(w)}}  \int_{E_n} \left( \frac{1}{|w-\zeta|}-\frac{1}{|w|} \right) \mathrm{d}\mu_{E_n}(\zeta) +O\left(\frac{1}{n}\right)\int_{E_n} \frac{1}{|w-\zeta|} \mathrm{d}\mu_{E_n}(\zeta) \\
&=&O\left( \frac{1}{n|w|}  \right).
\end{eqnarray*}

Similarly, for
$$
f_n'(w)=\int_{E_n} \frac{-1}{(w-\zeta)^2} \mathrm{d}\mu_{E_n}(\zeta),
$$
we obtain
$$
f_n'(w)= -\frac{1}{w^2}+O\left( \frac{1}{n|w|^2} \right).
$$
\end{proof}

\begin{lemma}\label{lemma5.3}
Let $\Omega$ be a planar domain whose boundary contains three points $a_i$, $i=1,2,3$. Set $E_i=\overline{D(a_i,r)} - \Omega$, and define
$$
f_i(z)=\int_{E_i} \frac{1}{z-\zeta} \mathrm{d}\mu_{E_i}(\zeta), \,\,\,i=1,2,3.
$$
Fix a constant $s \in \mathbb{C}$, and set
$$
f(z)=f_1(z)-sf_1(z)-(1-s)f_2(z).
$$
If $R>0$ is sufficiently large so that $E_i \subset D(0,R)$ for $i=1,2,3$, then
$$
\int_{\Omega-\overline{D(0,2R)}} |f(z)|^2 \mathrm{d}\lambda(z) \le 16\pi(1+|s|)^2,
$$
where $\lambda$ denotes the usual Lebesgue measure.
\end{lemma}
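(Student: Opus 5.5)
The approach is to exploit cancellation at infinity. Each $\mu_{E_i}$ is an equilibrium measure, hence a probability measure. Moreover, the coefficients in front of the $f_i$ add up to zero. Indeed, $1-s-(1-s)=0$ as written; for the evidently intended combination $f=f_1-sf_2-(1-s)f_3$ they also sum to $1-s-(1-s)=0$. So the leading term $1/z$ of each $f_i$ at infinity cancels, and $f$ decays like $|z|^{-2}$, which is square integrable near infinity. I would write the argument for a general combination $f=\sum_i c_if_i$ with $\sum_i c_i=0$ and $\sum_i|c_i|\le 2(1+|s|)$. Both readings of the statement satisfy this, since $1+|s|+|1-s|\le 2(1+|s|)$.

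\emph{Step 1 (pointwise expansion).} For $|z|>2R$ and $\zeta\in E_i\subset D(0,R)$ we have $|z-\zeta|\ge |z|/2$. Since $\mu_{E_i}(E_i)=1$, this gives
$$
\left|f_i(z)-\frac1z\right|=\left|\int_{E_i}\frac{\zeta}{z(z-\zeta)}\,\mathrm{d}\mu_{E_i}(\zeta)\right|\le \frac{2R}{|z|^2}.
$$
This is the same elementary estimate as in Lemma \ref{lemma5.2}, with the roles of the point and the set reversed.

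\emph{Step 2 (cancellation).} Because $\sum_i c_i=0$, we can write $f(z)=\sum_i c_i\bigl(f_i(z)-\frac1z\bigr)$. Step 1 then gives
$$
|f(z)|\le \frac{4R(1+|s|)}{|z|^2}\quad\text{for } |z|>2R.
$$

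\emph{Step 3 (integration).} We enlarge the domain of integration from $\Omega\setminus\overline{D(0,2R)}$ to $\{|z|>2R\}$ and use polar coordinates:
$$
\int_{|z|>2R}|f|^2\,\mathrm{d}\lambda\le 16R^2(1+|s|)^2\cdot 2\pi\int_{2R}^\infty r^{-3}\,\mathrm{d}r=4\pi(1+|s|)^2\le 16\pi(1+|s|)^2.
$$

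There is no real obstacle here. The only point needing care is to note that the coefficients sum to zero; without that, $|f|\sim |z|^{-1}$ and the integral would diverge logarithmically. One must also use that equilibrium measures have unit mass, which is what makes the $1/z$ term exactly cancel.
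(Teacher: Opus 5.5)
Your proof is correct and is essentially the paper's argument. Both use that the equilibrium measures have unit mass and that the coefficients sum to zero, so the $1/z$ terms cancel; both then bound $|z-\zeta|\ge |z|/2$ and integrate $|z|^{-4}$ in polar coordinates over $\{|z|>2R\}$. The only difference is bookkeeping: the paper writes $f$ as a triple integral of $\frac{\zeta_1-\zeta_3}{(z-\zeta_1)(z-\zeta_3)}-s\frac{\zeta_2-\zeta_3}{(z-\zeta_2)(z-\zeta_3)}$ and gets $|f|\le 8R(1+|s|)/|z|^2$, whereas your comparison of each $f_i$ with $1/z$ gives the sharper $4R(1+|s|)/|z|^2$, hence $4\pi(1+|s|)^2$, and also covers the statement as literally written with its typo.
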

\begin{proof}
We compute
\begin{eqnarray*}
f(z)&=&f_1(z)-sf_2(z)-(1-s)f_3(z)\\
&=&\int_{E_1} \frac{\mathrm{d}\mu_{E_1}(\zeta_1)}{z-\zeta_1}-s\int_{E_2} \frac{\mathrm{d}\mu_{E_2}(\zeta_2)}{z-\zeta_2}-(1-s)\int_{E_3} \frac{\mathrm{d}\mu_{E_3}(\zeta_3)}{z-\zeta_3}\\
&=&\int_{E_1}\int_{E_2}\int_{E_3} \left( \frac{1}{z-\zeta_1}-\frac{s}{z-\zeta_2}-\frac{1-s}{z-\zeta_3} \right) \mathrm{d}\mu_{E_1}(\zeta_1)\mathrm{d}\mu_{E_2}(\zeta_2)\mathrm{d}\mu_{E_3}(\zeta_3)\\
&=&\int_{E_1}\int_{E_2}\int_{E_3} \frac{(z-\zeta_2)(\zeta_1-\zeta_3)-s(z-\zeta_1)(\zeta_3-\zeta_2)}{(z-\zeta_1)(z-\zeta_2)(z-\zeta_3)} \mathrm{d}\mu_{E_1}(\zeta_1)\mathrm{d}\mu_{E_2}(\zeta_2)\mathrm{d}\mu_{E_3}(\zeta_3).
\end{eqnarray*}
where the third equality follows from the fact that the equilibrium measures are probability measures. Therefore,
\begin{equation}\label{chapter5, 5.5}
|f(z)|\le \int_{E_1}\int_{E_2}\int_{E_3} \left( \frac{|\zeta_1-\zeta_3|}{|z-\zeta_1||z-\zeta_3|}+|s|\frac{|\zeta_3-\zeta_2|}{|z-\zeta_2||z-\zeta_3|}   \right) \mathrm{d}\mu_{E_1}(\zeta_1)\mathrm{d}\mu_{E_2}(\zeta_2)\mathrm{d}\mu_{E_3}(\zeta_3).
\end{equation}
Note that
\begin{equation}\label{chapter5, 5.6}
 |z-\zeta_i| \ge |z|-R \ge  \frac{|z|}{2}, \,\,\,i=1,2,3.
\end{equation}
and
\begin{equation}\label{chapter5, 5.7}
|\zeta_1-\zeta_3| \le 2R,\,\,\, |\zeta_3-\zeta_2| \le 2R.
\end{equation}
Combining (\ref{chapter5, 5.5}), (\ref{chapter5, 5.6}), and (\ref{chapter5, 5.7}), we get
$$
|f(z)| \le \frac{8R(1+|s|)}{|z|^2}.
$$
Consequently,
\begin{eqnarray*}
\int_{\Omega-\overline{D(0,2R)}} |f(z)|^2 \mathrm{d}\lambda(z) &\le& \int_{\mathbb{C}-\overline{D(0,2R)}} \frac{64R^2(1+|s|)^2}{|z|^4} \mathrm{d}\lambda(z) \\
&=& (64R^2(1+|s|)^2) \int_{0}^{2\pi} \int_{2R}^{\infty} \frac{1}{t^4}\cdot t \mathrm{d}t \mathrm{d}\theta\\
&=& 16\pi(1+|s|)^2.
\end{eqnarray*}
\end{proof}

\begin{proposition} \label{prop5.4}
Let $\Omega \subset \mathbb{C}$ be a domain. For each point $w\in \Omega$, choose three boundary points $a_1, a_2, a_3 \in \partial \Omega$ such that $R:=|a_3-w|$ is much larger than $|a_2-w|$ and $|w-a_1|$, i.e.,
$$
\frac{R}{\max\{ |a_1-w|, |a_2-w| \} } \to +\infty,\,\,\,w \to \partial \Omega.
$$
Fix a sufficiently large constant $n$. Then there exists a numerical constant $C_0>0$ such that
\begin{equation}\label{chapter1, 1.1}
K^{(1)}_{\Omega}(w)\ge C_0 \frac{|a_2-a_1|^2}{(|w-a_1|^2|w-a_2|^4+|w-a_1|^4|w-a_2|^2)  \log \frac{8R}{\min_{i=1,2,3}\mathrm{Cap}(E_i)} },
\end{equation}
where $E_i=\overline{D\left(a_i, r \right)} \cap \Omega^c$ and $r=\frac{1}{n}\min \{ |w-a_1|,|w-a_2|, |a_2-a_1| \}$.
\end{proposition}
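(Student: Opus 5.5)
We test $K^{(1)}_\Omega(w)$ against one explicit extremal candidate, since $K^{(1)}_{\Omega}(w)\ge |f'(w)|^2/\|f\|^2_{L^2(\Omega)}$ for every $f\in A^2(\Omega)$ with $f(w)=0$. Take $f_i(z)=\int_{E_i}\frac{d\mu_{E_i}(\zeta)}{z-\zeta}$ as in Lemma \ref{lemma5.3}, and set
$$
f=f_1-sf_2-(1-s)f_3 .
$$
(Here we read the definition of $f$ in Lemma \ref{lemma5.3} as its proof does, with $f_2,f_3$.) Each $f_i$ is holomorphic off $E_i\subset\Omega^c$, so $f$ is holomorphic on $\Omega$. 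Choose $s\in\mathbb{C}$ so that $f(w)=0$:
$$
s=\frac{f_1(w)-f_3(w)}{f_2(w)-f_3(w)}.
$$
The plan is to bound $|f'(w)|$ from below and $\|f\|_{L^2(\Omega)}$ from above.

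\emph{Pointwise estimates at $w$.} Lemma \ref{lemma5.2} applies to $a_1,a_2$ with $r\le |w-a_i|/n$, giving $f_i(w)=\frac{1}{w-a_i}(1+O(1/n))$ and $f_i'(w)=-\frac{1}{(w-a_i)^2}(1+O(1/n))$ for $i=1,2$. For $a_3$ the trivial bounds $|f_3(w)|\lesssim 1/R$ and $|f_3'(w)|\lesssim 1/R^2$ suffice, since $R\gg |w-a_1|,|w-a_2|$ (we shrink $E_3$ to radius $r$ as well). Hence $s\approx \frac{w-a_2}{w-a_1}$ up to relative errors $O(1/n)+o(1)$, and
$$
f'(w)\approx -\frac{1}{(w-a_1)^2}+\frac{1}{(w-a_1)(w-a_2)}=\frac{a_1-a_2}{(w-a_1)^2(w-a_2)}.
$$
\emph{This is the main obstacle:} the leading terms may nearly cancel, so the $O(1/n)$ errors must be controlled by the main term. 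The errors are of size
$$
O\!\left(\tfrac1n\right)\left(|w-a_1|^{-2}+|w-a_2|^{-1}|w-a_1|^{-1}\right),
$$
while the main term has size $|a_1-a_2|/(|w-a_1|^2|w-a_2|)$. I would split into cases.
\begin{itemize}
\item If $|a_1-a_2|\gtrsim \min(|w-a_1|,|w-a_2|)$, the main term dominates once $n$ is a large fixed constant.
\item Otherwise $|w-a_1|\asymp|w-a_2|$ and $r=|a_1-a_2|/n$. Here I would redo Lemma \ref{lemma5.2} more carefully: expand $\frac1{w-\zeta}$ about $a_1$ and compare $f_1-f_2$ directly. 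Since $\zeta_i-a_i=O(|a_1-a_2|/n)$, the error becomes $O\!\left(\frac{|a_1-a_2|}{n|w-a_1|^3}\right)$, which is again small relative to the main term.
\end{itemize}
In either case $|f'(w)|\gtrsim \frac{|a_1-a_2|}{|w-a_1|^2|w-a_2|}$, and symmetrically we may use the form with the roles of $a_1,a_2$ exchanged.

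\emph{$L^2$ bound.} Split $\Omega=(\Omega\cap D(w,2R'))\cup(\Omega\setminus D(w,2R'))$, with $R'\approx 2R$ chosen so that all $E_i\subset D(w,R')$.
\begin{itemize}
\item On the outer region, Lemma \ref{lemma5.3} (translated to centre $w$) gives $\lesssim(1+|s|)^2$.
\item On the inner disc, Lemma \ref{lemma5.1} applied to each $E_i\subset D(w,4R)$ gives
$$
\int|f_i|^2\lesssim \log\frac{8R}{\mathrm{Cap}(E_i)}\quad\text{(up to harmless constants)},
$$
and with $|1-s|\le 1+|s|$ this yields
$$
\|f\|^2\lesssim (1+|s|)^2\log\frac{8R}{\min_i \mathrm{Cap}(E_i)}.
$$
\end{itemize}

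\emph{Combining.} Using $|s|\asymp |w-a_2|/|w-a_1|$, we get $(1+|s|)^2\asymp (|w-a_1|^2+|w-a_2|^2)/|w-a_1|^2$. Therefore
$$
\frac{|f'(w)|^2}{\|f\|^2}\gtrsim \frac{|a_1-a_2|^2}{|w-a_1|^4|w-a_2|^2}\cdot\frac{|w-a_1|^2}{|w-a_1|^2+|w-a_2|^2}\cdot\frac{1}{\log\frac{8R}{\min_i \mathrm{Cap}(E_i)}}.
$$
The first two factors equal $|a_1-a_2|^2\big/\bigl(|w-a_1|^2|w-a_2|^2(|w-a_1|^2+|w-a_2|^2)\bigr)$, which is exactly the bound in (\ref{chapter1, 1.1}).
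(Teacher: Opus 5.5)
\paragraph{Comparison with the paper.} Your proof is correct and has the same overall structure as the paper's. The common parts are the test function $f=f_1-sf_2-(1-s)f_3$, the normalization $f(w)=0$, the $L^2$ split via Lemmas \ref{lemma5.1} and \ref{lemma5.3}, and the final algebra with $(1+|s|)^2$.

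The difference is in the lower bound for $|f'(w)|$. The paper uses $f(w)=0$ to replace $f'(w)$ by $f'(w)+f(w)/(w-a_2)=I_1-sI_2-(1-s)I_3$, where $I_j=\int_{E_j}\frac{a_2-\zeta}{(w-\zeta)^2(w-a_2)}\,d\mu_{E_j}(\zeta)$. This builds the cancellation you worry about into the numerator $a_2-\zeta$:
\begin{itemize}
\item $I_2$ is small because $|a_2-\zeta|\le r$;
\item $I_1=-\frac{a_2-a_1}{w-a_2}f_1'(w)+\int_{E_1}\frac{a_1-\zeta}{(w-\zeta)^2(w-a_2)}\,d\mu_{E_1}$ has relative error $O(1/n)$ precisely because $r\le |a_2-a_1|/n$, and this holds in every configuration.
\end{itemize}
You instead expand $s$ and $f_i'(w)$ via Lemma \ref{lemma5.2}, split into cases, and sharpen Lemma \ref{lemma5.2} (errors $O(r/|w-a_i|)$ with $r=|a_1-a_2|/n$) in the near-cancellation case. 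Both routes give $|f'(w)|\gtrsim |a_2-a_1|/(|w-a_1|^2|w-a_2|)$. The paper's identity is shorter and needs no cases; your expansion shows explicitly where the cancellation comes from. The main term is $\frac{a_2-a_1}{(w-a_1)^2(w-a_2)}$; your sign is flipped, which is harmless.

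\paragraph{A point to fix in your second case.} There the main term is smaller than $|s f_2'(w)|$ by the factor $|a_1-a_2|/|w-a_1|$. So a relative error of size $\max_i|w-a_i|/R$ in $s$, coming from $f_3$, is not automatically negligible. It fails when $|a_1-a_2|/|w-a_1|\lesssim |w-a_1|/R$.

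The fix is to use $1-s=\frac{f_2(w)-f_1(w)}{f_2(w)-f_3(w)}$, which gives $|1-s|\lesssim |a_1-a_2|/|w-a_1|$, and to write $f'(w)=(f_1'-f_2')(w)+(1-s)(f_2'-f_3')(w)$. Then every $f_3$-contribution carries a factor $a_1-a_2$, and the trivial bounds $|f_3(w)|\lesssim 1/R$, $|f_3'(w)|\lesssim 1/R^2$ do suffice.

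The paper's own estimate (\ref{chapter5, 5.12}) has the same weakness. It uses only $|1-s|\lesssim 1+|s|$, which (with $|w-a_1|\le|w-a_2|$) gives $|(1-s)I_3|\lesssim 1/(R|w-a_1|)$. This is $\ll |I_1|$ only when $R\gg |w-a_1||w-a_2|/|a_2-a_1|$. The hypothesis $R\gg\max_i|w-a_i|$ guarantees that only when $|a_2-a_1|\gtrsim\min_i|w-a_i|$, i.e.\ in your first case. That is also the configuration actually used in Theorems \ref{lowerboundK1} and \ref{lowerboundd}. The sharper bound on $|1-s|$ closes the remaining regime in both arguments.
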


\begin{proof}
We may assume without loss of generality that $|w-a_1|\le |w-a_2|$. For $w \in \Omega$, define
$$
f_i(z)=\int_{E_i} \frac{1}{z-\zeta} \mathrm{d}\mu_{E_i}(\zeta).
$$
Let $f(z)=f_1(z)-sf_2(z)-(1-s)f_3(z)$, where the constant $s \in \mathbb{C}$ is chosen so that
$$
f(w)=f_1(w)-sf_2(w)-(1-s)f_3(w)=0.
$$
Then by Lemma \ref{lemma5.2}, together with the choice of $a_3$, when $w$ is sufficiently close to the boundary and $n$ is taken sufficiently large, we have
\begin{equation}\label{chapter5, 5.8}
|s|=\left| \frac{f_1(w)-f_3(w)}{f_2(w)-f_3(w)} \right| \asymp \left| \frac{f_1(w)}{f_2(w)}\right| \asymp \frac{|w-a_2|}{|w-a_1|}.
\end{equation}
As long as $R$ is much larger than $|a_2-a_1|$ and $|w-a_1|$, and $n$ is sufficiently large, the constants in the above ``$\asymp$'' are arbitrarily close to $1$.
Now
\begin{eqnarray}
\notag f'(w)&=&f'(w)+\frac{f(w)}{w-a_2}\\ 
\notag &=& \left( f_1'(w)+\frac{f_1(w)}{w-a_2} \right)-s\left( f_2'(w)+\frac{f_2(w)}{w-a_2}   \right) -(1-s)\left(  f_3'(w)+\frac{f_3(w)}{w-a_2}   \right)\\
&=:& I_1-sI_2-(1-s)I_3.\label{chapter5, 5.9}
\end{eqnarray}
First,
\begin{eqnarray}
\notag I_1&=&f_1'(w)+\frac{f_1(w)}{w-a_2}\\
\notag &=&\int_{E_1} \left( -\frac{1}{(w-\zeta)^2} +\frac{1}{(w-a_2)(w-\zeta)} \right) \mathrm{d}\mu_{E_1}(\zeta)\\
\notag &=&\int_{E_1} \frac{a_2-\zeta}{(w-\zeta)^2(w-a_2)} \mathrm{d}\mu_{E_1}(\zeta)\\
\notag &=& -\frac{a_2-a_1}{w-a_2}f_1'(w)-\int_{E_1} \frac{a_1-\zeta}{(w-\zeta)^2(w-a_2)}\mathrm{d}\mu_{E_1}(\zeta)\\
&=&\frac{a_2-a_1}{(w-a_2)(w-a_1)^2}\left( 1+O\left( \frac{1}{n} \right) \right)+O \left( \frac{|a_2-a_1|}{n|w-a_1|^2|w-a_2|}    \right),\label{chapter5, 5.10}
\end{eqnarray}
so $|I_1| \asymp \frac{|a_2-a_1|}{|w-a_1|^2|w-a_2|}$. For the remaining two terms, we have
\begin{eqnarray}
\notag |sI_2| &=& \left| s \int_{E_2} \frac{a_2-\zeta}{(w-\zeta)^2(w-a_2)} \mathrm{d}\mu_{E_2}(\zeta)   \right|\\
\notag &\le& |s|   \int_{E_2} \left| \frac{a_2-\zeta}{(w-\zeta)^2(w-a_2)}\right| \mathrm{d}\mu_{E_2}(\zeta)   \\
\notag &\lesssim& \frac{|w-a_2|}{|w-a_1|} \cdot \frac{\min \{ |a_2-a_1|,|w-a_1|,|w-a_2| \}}{n|w-a_2|^3}\\
\notag &\lesssim&\frac{|a_2-a_1|}{n|w-a_1||w-a_2|^2}\\
&\lesssim& \frac{|I_1|}{n},\label{chapter5, 5.11}
\end{eqnarray}
and
\begin{eqnarray}
\notag |(1-s)I_3| &=& \left|(1- s) \int_{E_3} \frac{a_2-\zeta}{(w-\zeta)^2(w-a_2)} \mathrm{d}\mu_{E_3}(\zeta)   \right|\\
\notag &\lesssim& \left( 1+\frac{|w-a_2|}{|w-a_1|} \right) \cdot \frac{ |a_2-a_3| }{|w-a_2||w-a_3|^2}\\
&\ll &| I_1|, \label{chapter5, 5.12}
\end{eqnarray}
where the last inequality follows from the fact that $R=|w-a_3|$ is much larger than $|w-a_2|$ and $|w-a_1|$. Combining (\ref{chapter5, 5.9}), (\ref{chapter5, 5.10}), (\ref{chapter5, 5.11}), and (\ref{chapter5, 5.12}), we see that as long as $n$ is sufficiently large and $R$ is much larger than $|a_2-a_1|$ and $|w-a_1|$, we have
\begin{equation}\label{chapter5, 5.13}
|f'(w)| \gtrsim \frac{|a_2-a_1|}{|w-a_1|^2|w-a_2|}.
\end{equation}
Here, the constant in "$\gtrsim$" is a numerical constant.

We now estimate $\|f\|_{L^2(\Omega)}$. First,
\begin{eqnarray} \label{chapter5, 5.14}
\int_{\Omega} |f|^2 \mathrm{d}\lambda &=& \int_{\Omega\cap D(0,2R)} |f|^2 \mathrm{d}\lambda +\int_{\Omega-D(0,2R)} |f|^2 \mathrm{d}\lambda.
\end{eqnarray}
For the second term, since $R$ is taken sufficiently large, we have $E_i \subset D(0, 2R)$ for $i=1,2,3$. Hence, by Lemma \ref{lemma5.3},
\begin{equation} \label{chapter5, 5.15}
\int_{\Omega-D(0,2R)} |f|^2 \mathrm{d}\lambda \le 16\pi (1+|s|)^2.
\end{equation}
For the first term, using Lemma \ref{lemma5.1}, we obtain
\begin{eqnarray}
\notag \int_{\Omega\cap D(0,2R)} |f|^2 \mathrm{d}\lambda &\lesssim& \int_{D(0,2R)} (|f_1|^2+|s|^2|f_2|^2+|1-s|^2|f_3|^2) \mathrm{d}\lambda \\
\notag &\lesssim& (1+|s|^2) \left(  \log \frac{8R}{\mathrm{Cap}(E_1)} +\log \frac{8R}{\mathrm{Cap}(E_2)}+\log \frac{8R}{\mathrm{Cap}(E_3)}    \right)\\
&\lesssim&(1+|s|^2) \log \frac{8R}{\min_{i=1,2,3}\mathrm{Cap}(E_i)}. \label{chapter5, 5.16}
\end{eqnarray}
Here, the constant in "$\gtrsim$" is also a numerical constant. Combining (\ref{chapter5, 5.13}), (\ref{chapter5, 5.14}), (\ref{chapter5, 5.15}), and (\ref{chapter5, 5.16}), we get
$$
K^{(1)}_{\Omega}(w)\gtrsim \frac{|a_2-a_1|^2}{|w-a_1|^4|w-a_2|^2(1+|s|^2)\log \frac{8R}{\min_{i=1,2,3}\mathrm{Cap}(E_i)} }.
$$
Since $|s|^2 \asymp \frac{|w-a_2|^2}{|w-a_1|^2}$, it follows that
$$
K^{(1)}_{\Omega}(w)\gtrsim \frac{|a_2-a_1|^2}{(|w-a_1|^2|w-a_2|^4+|w-a_1|^4|w-a_2|^2)  \log \frac{8R}{\min_{i=1,2,3}\mathrm{Cap}(E_i)} }.
$$
This proves (\ref{chapter1, 1.1}).
\end{proof}

\begin{proof}[Proof of Theorem \ref{lowerboundK1}]
For $w \in \Omega$, we may take $a_1$ to be a point on the boundary closest to $w$, and assume without loss of generality that $a_1=0$, so that $|w|=\delta_{\Omega}(w)$. Choose $r>0$ such that $|w|=h(r)$. By the $h$-uniform perfectness condition, we may take
$$
a_2 \in \partial \Omega \cap \{ z\in \mathbb{C}: h(r) \le |z| \le r   \},
$$
and
$$
a_3 \in \partial \Omega \cap \{ z\in \mathbb{C}: 2r \le |z| \le h^{-1}(2r)   \}.
$$
Note that all the functions $h$ considered in this theorem have inverse functions. Set $R=h^{-1}(2r)$. Note that $|w-a_2| \ge |w|$ and $|w| \le |a_2|$, i.e.,
$$
\max \{ |w|, |a_2-w|  \}  \le |w-a_2| \lesssim |a_2|.
$$
Discussing separately the two cases $|a_2|\ge 2|w|$ and $|w| \le |a_2| <2|w|$, it is easy to see that in either case we always have $|a_2| \asymp |w-a_2|$. Consequently, Proposition \ref{prop5.4} implies
\begin{eqnarray}\label{chapter5, 5.17}
\notag K^{(1)}_{\Omega}(w) &\gtrsim& \left( |w|^{2}|w-a_2|^{2}  \log \frac{8h^{-1}(2h^{-1}(|w|))}{\min_{i=1,2,3}\mathrm{Cap}(E_i)} \right)^{-1} \\
&\gtrsim& \left( r^{2}h(r)^{2} \log \frac{8h^{-1}(2h^{-1}(|w|))}{\min_{i=1,2,3}\mathrm{Cap}(E_i)}   \right)^{-1},
\end{eqnarray}
where $E_i=\overline{D\left(a_i, \frac{|w|}{n}\right)} \cap E^c, i=1,2,3$.

$(1)$ When $h(t)=Ct^{\alpha}$ with $\alpha \in (1,2)$, we have $h^{-1}(t)=\left( \frac{t}{C}  \right)^{\frac{1}{\alpha}}$, and by Theorem \ref{thm2.6},
$$
\mathrm{Cap}(E_i) \gtrsim \left(\frac{|w|}{n}\right)^{\frac{\alpha}{2-\alpha}}.
$$
Here, $n$ is a properly chosen constant in advance. Substituting this into (\ref{chapter5, 5.17}) yields
$$
K^{(1)}_{\Omega}(w)\gtrsim \frac{1}{r^{2+2\alpha}\log \frac{1}{r}} \gtrsim \frac{1}{|w|^{2+\frac{2}{\alpha}}\log \frac{1}{|w|}}.
$$

$(2)$ When $h(t)=Ct\left( \log \frac{1}{t} \right)^{-\beta}$ with $\beta>0$, from the proof of Lemma 5.2 in \cite{XiongZheng} it is easy to see that
$$
h^{-1}(t) \lesssim \frac{1}{C}t \left( \log \frac{1}{t} \right)^{\beta},
$$
and by Theorem \ref{thm2.6},
$$
\mathrm{Cap}(E_i) \gtrsim \frac{|w|}{n} \left(\log\frac{n}{|w|}\right)^{-2\beta}.
$$
Substituting this into (\ref{chapter5, 5.17}) gives
$$
K^{(1)}_{\Omega}(w) \gtrsim \frac{1}{|w|^4\left( \log \frac{1}{|w|}  \right)^{2\beta} \log \log \frac{1}{|w|}}.
$$
\end{proof}

\section{Proof of Theorem \ref{estimateb}}

In this section, we use the notation
$$
h^{-1}(r)=\sup \{ t: h(t) = r   \},  \ \ \ \forall r \in \mathrm{Im}(h),
$$
where the function $h(r)$ is continuous, but need not be strictly increasing. Clearly, $h(h^{-1}(r)) = r$. 

To prove Theorem \ref{estimateb}, we first need to establish a lower bound estimate for $K_{\Omega}(w)$. Here we reformulate Theorem 1.4 of \cite{XiongZheng} as follows.

\begin{proposition}\label{prop6.1}
Let $\Omega$ be a domain in $\mathbb{C}$, and let $w\in\Omega$ be sufficiently close to the boundary. If $\Omega$ satisfies conditions $(U)_{h}$ and $(C)_g$, then
$$
K_{\Omega}(w) \gtrsim \frac{1}{\delta_{\Omega}(w)^2 \log \frac{8h^{-1}(8\delta_{\Omega}(w))}{g(\delta_{\Omega}(w))}}.
$$
\end{proposition}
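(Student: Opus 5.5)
We prove Proposition \ref{prop6.1} by the Pflug--Zwonek type construction already used in Section 5: build an explicit function in $A^2(\Omega)$ from equilibrium potentials of two small pieces of the complement, one near $w$ and one far away. We then bound its value at $w$ from below and its $L^2$ norm from above, and use
$$
K_\Omega(w)\ge \frac{|f(w)|^2}{\|f\|^2_{L^2(\Omega)}}.
$$

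\emph{Choice of boundary points.} Put $\delta=\delta_\Omega(w)$ and choose $a_1\in\partial\Omega$ with $|w-a_1|=\delta$; translate so that $a_1=0$. Condition $(U)_h$, applied at $a_1$ with radius $r=h^{-1}(8\delta)$, gives a point
$$
a_2\in\partial\Omega\cap\{8\delta\le|z|\le h^{-1}(8\delta)\}.
$$
Thus $|w-a_2|\ge 7\delta$, and $R:=|w-a_2|\le 2h^{-1}(8\delta)$.

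\emph{The test function.} Take $E_i=\overline{D(a_i,\delta/n)}\setminus\Omega$ for a fixed large $n$, and
$$
f=f_1-f_2,\qquad f_i(z)=\int_{E_i}\frac{d\mu_{E_i}(\zeta)}{z-\zeta}.
$$
These $E_i$ are non-polar, because $(C)_g$ gives $\mathrm{Cap}(E_i)\ge g(\delta/n)$. By Lemma \ref{lemma5.2},
$$
f_1(w)=\frac1w+O\Big(\frac1{n\delta}\Big),\qquad |f_2(w)|\le\frac{1}{6\delta}.
$$
Hence $|f(w)|\gtrsim 1/\delta$ once $n$ is large.

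\emph{The $L^2$ bound.} Split $\Omega$ into $D(0,4R)$ and its complement. On $D(0,4R)$, Lemma \ref{lemma5.1} gives
$$
\int_{D(0,4R)\cap\Omega}|f|^2\lesssim \log\frac{16R}{\min_i\mathrm{Cap}(E_i)}.
$$
Outside $D(0,4R)$, the two-point version of Lemma \ref{lemma5.3} applies: $f_1-f_2$ is a difference of averages of Cauchy kernels of probability measures, so $|f(z)|\lesssim R/|z|^2$, and the integral over that region is $O(1)$.

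\emph{Conclusion.} Combining the two bounds,
$$
K_\Omega(w)\gtrsim \frac{1}{\delta^2\log\frac{C h^{-1}(8\delta)}{g(\delta/n)}}.
$$

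\emph{Main obstacle.} The remaining step, which I expect to be the delicate one, is to replace $g(\delta/n)$ by $g(\delta)$ and to absorb the constants into the stated form $8h^{-1}(8\delta)/g(\delta)$. Since $n$ is a fixed numerical constant, I would handle this in one of two ways. The first is to rescale the definition of $E_i$ to radius $\delta$, with $n$ absorbed: Lemma \ref{lemma5.2} only needs the sets small relative to $|w-a_i|$, and for $a_2$ the ratio $R/\delta\ge 7$ already gives room. The second is to shift $n$ into the argument of $h^{-1}$ by choosing $a_2$ farther away. Either way, a constant-factor loss inside the logarithm is harmless, because $\log(h^{-1}(8\delta)/g(\delta))\to\infty$; this holds since $g(\delta)\le\delta$ by monotonicity.
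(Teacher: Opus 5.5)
There is a genuine gap, and it sits at the step you flagged as the main obstacle. As written, your argument proves
\[
K_\Omega(w)\gtrsim \frac{1}{\delta^2\log\frac{C\,h^{-1}(8\delta)}{g(\delta/n)}}.
\]
For a general increasing $g$ this is strictly weaker than the claim, because nothing ties $g(\delta/n)$ to $g(\delta)$. For instance, $g(t)=e^{-e^{1/t}}$ gives $\log\frac{1}{g(\delta/n)}=e^{n/\delta}$ against $\log\frac{1}{g(\delta)}=e^{1/\delta}$. So the loss is not a constant factor inside the logarithm.

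Neither of your remedies closes the gap. Moving $a_2$ farther away is irrelevant, since the constraint is on the radius of $E_1$. Taking radius $\delta$ is the right move, but then Lemma~\ref{lemma5.2} no longer applies to $E_1$. You checked the size ratio only for $a_2$; for $a_1$ you have $|w-a_1|=\delta$, equal to the radius of $E_1$. So $E_1$ is not small relative to its distance from $w$, and you have no lower bound for $|f_1(w)|$.

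The missing ingredient is a lower bound for $|f_{E_1}(w)|$ that does not need $E_1$ to be small. The paper gets it from the Pflug--Zwonek splitting, in four steps:
\begin{enumerate}
\item Cut $E_1=\overline{D(w',\delta)}\setminus\Omega$ into three sectors as seen from $w$, and rotate so that the piece $E_{11}$ in $|\arg(\zeta-w)|\le\pi/3$ has the largest capacity.
\item Use subadditivity of $1/\log(8r/\mathrm{Cap}(\cdot))$ (Ransford, Thm.~5.1.4) to get $\log\frac{8r}{\mathrm{Cap}(E_{11})}\le 3\log\frac{8r}{\mathrm{Cap}(E_1)}$.
\item On $E_{11}$ one has $\cos\arg(\zeta-w)\ge\frac12$ and $|\zeta-w|\le 2\delta$, hence $|f_{11}(w)|\ge\frac{1}{4\delta}$.
\item With $E_2=\overline{D(w'',\delta)}\setminus\Omega$ one has $|f_2(w)|\le\frac{1}{6\delta}$, and every capacity involved is bounded below by $g(\delta)$.
\end{enumerate}

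Within your own framework there is an even shorter fix. Put $u=\zeta-w$ and $v=a_1-w$, so $|v|=\delta$. For $u\neq 0$, the condition $|u-v|\le|v|$ is equivalent to $\mathrm{Re}\frac{v}{u}\ge\frac12$. So every $\zeta\in\overline{D(a_1,\delta)}\setminus\{w\}$ satisfies $\mathrm{Re}\frac{a_1-w}{\zeta-w}\ge\frac12$. Hence, for the full set $E_1=\overline{D(a_1,\delta)}\setminus\Omega$,
\[
|f_{E_1}(w)|\ge\frac{1}{\delta}\,\mathrm{Re}\int\frac{a_1-w}{\zeta-w}\,d\mu_{E_1}(\zeta)\ge\frac{1}{2\delta}.
\]
Take $E_1,E_2$ of radius $\delta$. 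Then $|f(w)|\ge\frac{1}{2\delta}-\frac{1}{6\delta}$, $\mathrm{Cap}(E_i)\ge g(\delta)$, and the rest of your argument goes through to give the bound with $g(\delta)$ as stated.
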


\begin{proof}
Let $w\in\Omega$ be sufficiently close to $\partial\Omega$, and choose $w'\in\partial\Omega$ such that $|w-w'|=\delta_\Omega(w)$. Take $r=h^{-1}(8\delta_{\Omega}(w))$. Then there exists another point $w''\in\partial\Omega$ with
\[
8\delta_\Omega(w)\leq|w''-w'|\leq{r}.
\]
Let $E_1:=\overline{D(w',\delta_\Omega(w))}\setminus\Omega$. As in \cite{PflugZwonek}, we divide $E_1$ into the following three parts:
\begin{eqnarray*}
E_{11} &=& E_1\cap\left\{w+se^{i\theta}\in\mathbb{C}: s>0,\ -\frac{\pi}{3}\leq\theta\leq\frac{\pi}{3}\right\},\\
E_{12} &=& E_1\cap\left\{w+se^{i\theta}\in\mathbb{C}: s>0,\ \frac{\pi}{3}\leq\theta\leq\pi\right\},\\
E_{13} &=& E_1\cap\left\{w+se^{i\theta}\in\mathbb{C}: s>0,\ \pi\leq\theta\leq\frac{5\pi}{3}\right\},
\end{eqnarray*}
so that
\begin{equation}\label{eq:cos}
\cos(\mathrm{arg}(\zeta-w))\geq\frac{1}{2},\ \ \ \zeta\in{E_{11}}.
\end{equation}
Moreover, by rotating $\Omega$ around $w$, we may assume that $\mathrm{Cap}(E_{11})\geq\mathrm{Cap}(E_{12})\geq\mathrm{Cap}(E_{13})$. Then we infer from Theorem 5.1.4 of \cite{Ransford} that
\begin{eqnarray*}
\frac{1}{\log ({8r}/{\mathrm{Cap}(E_1)})} &\le& \frac{1}{\log ({8r}/{\mathrm{Cap}(E_{11})})}+\frac{1}{\log ({8r}/{\mathrm{Cap}(E_{12})})}+\frac{1}{\log ({8r}/{\mathrm{Cap}(E_{13})})}\\
&\le& \frac{3}{\log ({8r}/{\mathrm{Cap}(E_{11})})},
\end{eqnarray*}
so that
\begin{equation}\label{eq:log_E_11}
\log \frac{8r}{\mathrm{Cap}(E_{11})} \leq 3\log \frac{8r}{\mathrm{Cap}(E_1)}.
\end{equation}

Set $E_2:=\overline{D(w'',\delta_\Omega(w))}\setminus\Omega$. We have $E_{11},E_2\subset{D(w,2r)}$. Let
\[
f_{11}:=f_{E_{11}},\ \ \ f_2:=f_{E_2},
\]
be the functions given in Lemma \ref{lemma5.1}, and consider $f:=f_{11}-f_2$. Write
\[
\int_{\Omega}|f|^2 = \int_{\Omega\cap D(w,2r)}|f|^2+\int_{\Omega\setminus{D(w,2r)}}|f|^2=:I_1+I_2.
\]
For $I_1$, it follows from Lemma \ref{lemma5.1} and \eqref{eq:log_E_11} that
\begin{eqnarray}
I_1 &\lesssim& \int_{\Omega\cap D(w,2r)}\left(|f_{11}|^2+|f_2|^2\right)\nonumber\\
&\lesssim& \int_{D(w,2r)\setminus{E_{11}}}|f_{11}|^2+\int_{D(w,2r)\setminus{E_2}}|f_2|^2\nonumber\\
&\lesssim& \log \frac{8r}{\mathrm{Cap}(E_{11})} + \log \frac{8r}{\mathrm{Cap}(E_2)}\nonumber\\
&\lesssim& \log \frac{8r}{\mathrm{Cap}(E_1)}+\log \frac{8r}{\mathrm{Cap}(E_2)}  \nonumber\\
&\lesssim& \log \frac{8r}{g(\delta_{\Omega}(w))},
\end{eqnarray}
where the last inequality follows from condition $(C)_g$, and the implicit constant is independent of $w$. For $I_2$, Lemma \ref{lemma5.3} implies
$$
I_2 \lesssim 1.
$$
Thus,
\begin{equation}\label{eq:f_L2_thm2}
\int_\Omega|f|^2\lesssim\log\frac{8h^{-1}(8\delta_{\Omega}(w))}{g(\delta_\Omega(w))}.
\end{equation}

It remains to find a lower bound for $|f(w)|$. Clearly, we have $|f(w)|\geq|f_{11}(w)|-|f_2(w)|$. First, by using \eqref{eq:cos} and noticing that $E_{11}\subset{D(w,2\delta_\Omega(w))}$, we have
\begin{eqnarray*}
|f_{11}(w)|&=&\left| \int_{E_{11}}\frac{d\mu_{K_{11}}(\zeta)}{w-\zeta}  \right|\\
&=& \left| \int_{E_{11}} \frac{\cos(\arg(\zeta-w))}{|w-\zeta|} d\mu_{K_{11}}(\zeta) +i \int_{E_{11}} \frac{\sin(\arg(\zeta-w))}{|w-\zeta|} d\mu_{K_{11}}(\zeta) \right| \\
&\ge&\left| \int_{E_{11}} \frac{\cos(\arg(\zeta-w))}{|w-\zeta|} d\mu_{E_{11}}(\zeta)  \right| \\
&\ge& \frac{1}{2}\int_{E_{11}}\frac{1}{|w-\zeta|}d\mu_{E_{11}}(\zeta)\\
&\ge& \frac{1}{4\delta_\Omega(w)}.
\end{eqnarray*}
On the other hand,
\begin{eqnarray*}
|f_2(w)| &\le& \int_{E_2} \frac{d\mu_{K_2}(\zeta)}{|w-\zeta|}\\
&\le& \int_{E_2} \frac{d\mu_{K_2}(\zeta)}{|w'-w''|-|w-w'|-|w''-\zeta|}\\
&\le& \int_{E_2} \frac{d\mu_{K_2}(\zeta)}{8\delta_\Omega(w)-\delta_\Omega(w)-\delta_\Omega(w)}\\
&\le& \frac{1}{6\delta_\Omega(w)}.
\end{eqnarray*}
Thus $|f(w)|\geq \frac{1}{12\delta_\Omega(w)}$, which together with \eqref{eq:f_L2_thm2} completes the proof of the proposition.
\end{proof}

Now we apply the above proposition to the proof of Theorem \ref{estimateb}.

\begin{proof}[Proof of Theorem \ref{estimateb}]
Write $E=\partial \Omega$, and $s_i(t):=\frac{h_i(t)}{t}, i=1,2$. As noted in Section 2, we may assume without loss of generality that $g_E(t)>0$ for all $t \in (0, \mathrm{diam}(\Omega))$.
For $w \in \Omega$, Proposition \ref{prop6.1} gives
$$
K_{\Omega}(w) \gtrsim \frac{1}{\delta_{\Omega}(w)^2 \log \frac{8h_{1}^{-1}(8\delta_{\Omega}(w))}{g_E(\delta_{\Omega}(w))} }.
$$

On the other hand, there exists a sequence $r_n$ monotonically decreasing to $0$ such that for each $r_n$, there is some $a_n \in E$ with
$$
A_n=\{  z \in \mathbb{C}: h_2(r_n)<|z-a_n|<r_n    \} \subset E^c.
$$
For $n$ sufficiently large, we necessarily have $A_n\subset \Omega$; hence we may assume this holds for every $n$. Take $w_n \in A_n$ such that $|w_n-a_n|^2=r_nh_2(r_n)$. We estimate the upper bound of $K_{\Omega}^{(1)}(w_n)$. Consider the conformal mapping
$$
T: A_n \to \left\{ z:    \sqrt{\frac{h_2(r_n)}{r_n}}<|z|<\sqrt{\frac{r_n}{h_2(r_n)}}  \right\}, \ \ \ w \mapsto \frac{w-a}{\sqrt{r_n h_2(r_n)}}.
$$
Note that
$$
h_1(r_n)<h_2(r_n)\ll \sqrt{r_n h_2(r_n)}\lesssim \sqrt{r_n h_2(r_n)}-h_2(r_n) \le \delta_{\Omega}(w_n) \le \sqrt{r_n h_2(r_n)}\ll r_n,
$$
that is,
$$
\delta_{\Omega}(w_n) \asymp \sqrt{r_n h_2(r_n)}.
$$
Then $|T(w_n)|=1$. Consequently, by the properties of $K^{(1)}_{\Omega}(w)$ and (3.6), we have
\begin{eqnarray*}
K_{\Omega}^{(1)}(w_n) &\le& K_{A_n}^{(1)}(w_n)\\
&=& K_{T(\Omega)}^{(1)}(T(w_n)) \cdot | T'(w_n) |^4 \\
&=&O\left( \frac{h_2(r_n)}{r_n} \right) \cdot \frac{1}{r_n^2h_2(r_n)^2}\\
&=&\frac{1}{\delta_{\Omega}(w_n)^4}\cdot O\left(  \frac{h_2(r_n)}{r_n}    \right).
\end{eqnarray*}
Therefore,
\begin{eqnarray*}
b_{\Omega}(w_n)^2 &=&\frac{K_{\Omega}^{(1)}(w_n)}{K_{\Omega}(w_n)}\\
&\lesssim& \frac{1}{\delta_{\Omega}(w_n)^2} \cdot \frac{h_2(r_n)}{r_n} \cdot \log \frac{8h_{1}^{-1}(8\delta_{\Omega}(w_n))}{g_E(\delta_{\Omega}(w_n))}\\
&\lesssim& \frac{1}{\delta_{\Omega}(w_n)^2} \cdot \frac{h_2(r_n)}{r_n} \cdot \left(   \log \frac{\tilde{h}_{1}^{-1}(r_n)}{g_E(2\tilde{h}_1(r_n))}  \right),
\end{eqnarray*}
where $\tilde{h}(t)=\frac{1}{5}h_1(t)$. Note that by Theorem \ref{thm2.6},
$$
g_E(t) \ge \exp \left( \sum_{k=1}^{\infty} \frac{\log \tilde{h}_{1}^{\circ k}\left(\frac{t}{2}\right)}{2^k}   \right),
$$
which implies
\begin{eqnarray*}
b_{\Omega}(w_n)^2 &\lesssim&  \delta_{\Omega}(w_n)^{-2}(I_1+I_2+I_3),
\end{eqnarray*}
where
\begin{eqnarray*}
I_1&:=&  {{s}_2(r_n)}    \log \frac{ \tilde{h}_{1}^{-1}(r_n)}{r_n},\\
I_2&:=&{{s}_2(r_n)}  \log \frac{r_n}{\tilde{h}_{1}(r_n)},\\
I_3&:=&{{s}_2(r_n)}  \left( \log \tilde{h}_{1}(r_n) -\sum_{k=1}^{\infty} \frac{\log \tilde{h}_{1}^{\circ (k+1)}(r_n)}{2^k}    \right).
\end{eqnarray*}
This inequality also holds trivially when the series in $I_3$ diverges.

By the hypothesis,
$$
 \log \frac{ \tilde{h}_{1}^{-1}(r_n)}{r_n} \le \log \frac{ \tilde{h}_{1}^{-1}(r_n)}{\tilde{h}_{1}(\tilde{h}_{1}^{-1}(r_n))} \lesssim  \log \frac{1}{s_1(r_n)} \asymp \log \frac{1}{s_2(r_n)}.
$$
Hence $I_1, I_2 \to 0$ as $n \to \infty$. Note that
\begin{eqnarray*}
&&\log \tilde{h}_{1}(r_n) -\sum_{k=1}^{\infty} \frac{\log \tilde{h}_{1}^{\circ (k+1)}(r_n)}{2^k}\\
&=&\log  \tilde{h}_{1}(r_n)-\log  \tilde{h}_{1}^{\circ 2}(r_n)+\frac{1}{2} \log  \tilde{h}_{1}^{\circ 2}(r_n) -  \sum_{k=2}^{\infty} \frac{\log \tilde{h}_{1}^{\circ (k+1)}(r_n)}{2^k}\\
&=& \log \frac{ \tilde{h}_{1}(r_n)}{ \tilde{h}_{1}^{\circ 2}(r_n)} +\frac{1}{2} \log \frac{ \tilde{h}_{1}^{\circ 2}(r_n)}{ \tilde{h}_{1}^{\circ 3}(r_n)}+\frac{1}{2^2} \log  \tilde{h}_{1}^{\circ 3}(r_n)-\sum_{k=3}^{\infty} \frac{\log \tilde{h}_{1}^{\circ (k+1)}(r_n)}{2^k}\\
&=& \cdots \\
&=& \sum_{k=1}^{\infty} \frac{1}{2^{k-1}} \log \frac{ \tilde{h}_{1}^{\circ k}(r_n)}{ \tilde{h}_{1}^{\circ (k+1)}(r_n)}\\
&=& \sum_{k=1}^{\infty} \frac{1}{2^{k-1}} \log \frac{1}{ s_1(\tilde{h}_{1}^{\circ k}(r_n))}.
\end{eqnarray*}
To complete the proof, we need
$$
I_3=\sum_{k=1}^{\infty} \frac{1}{2^{k-1}} s_2(r_n) \log \frac{1}{s_1( \tilde{h}_{1}^{\circ k}(r_n))} \to 0, \ \ \ n \to +\infty.
$$
By the hypothesis,
$$
\log \frac{\tilde{h}_{1}\circ \tilde{h}_{1}(t)}{\tilde{h}_{1}(t)} \ge C \log \frac{\tilde{h}_{1}(t)}{t}, \ \ \ C<2.
$$
Then
$$
\log \frac{\tilde{h}_{1}^{\circ (k+1)}(t)}{\tilde{h}_{1}^{\circ k}(t)} \ge C \log \frac{\tilde{h}_{1}^{\circ k}(t)}{\tilde{h}_{1}^{\circ (k-1)}(t)} \ge \cdots \ge C^k  \log \frac{\tilde{h}_{1}(t)}{t},
$$
that is,
\begin{eqnarray*}
\sum_{k=1}^{\infty} \frac{1}{2^{k-1}} s_2(r_n) \log \frac{1}{s_1( \tilde{h}^{\circ k}(r_n))} &\le& \sum_{k=1}^{\infty} \left(\frac{C}{2} \right)^{k-1} s_2(r_n)\log \frac{1}{s_1(r_n)}\\
&\asymp&s_2(r_n)\log \frac{1}{s_2(r_n)}.
\end{eqnarray*}
Since $s_2(r_n) \to 0$, we have $I_3 \to 0$. This completes the proof.
\end{proof}

\section{Proofs of Theorems \ref{upperboundK}  and \ref{lowerboundd} }

The method of proof of Theorem \ref{upperboundK} is inspired by \cite{Chen2023}. Before starting the proof, we need to make some preparations.

We shall need some properties of Dirichlet capacity and Green capacity, these can be found in Section 2 of \cite{Chen2023}. 

Let $\Omega$ be a domain containing a compact set $E$, and let $g_{\Omega}(z,w)$ denote the (negative) Green function of $\Omega$. If in the definition of logarithmic capacity one replaces the energy $I(\mu)$ by
$$
I_G(\mu)=\int_{\Omega} \int_{\Omega} g_{\Omega}(z,w) \mathrm{d}\mu(z) \mathrm{d}\mu(w),
$$
then, in analogy with the definition of logarithmic capacity, one can define the Green capacity of $E$ with respect to $\Omega$ as
$$
C_g(E, \Omega):= \exp \left({\max_{\mu \in \mathcal{P}(E)}I_G(\mu)}\right),
$$
Where $\mathcal{P}(E)$ denotes the set of all Borel probability measures on $E$. The Green capacity is related to logarithmic capacity by
\begin{equation}
\log \frac{\mathrm{Cap}(E)}{R} \le \log C_g(E,\Omega) \le \log \frac{\mathrm{Cap}(E)}{d},
\end{equation}
where $R=\mathrm{diam} (\Omega)$ and $d=d(E,\partial \Omega)$.

The Dirichlet capacity of $E$ with respect to $\Omega$ is defined by
\begin{equation}\label{chapter2, 2.1}
C_d(E,\Omega)=\inf_{\phi \in \mathcal{L}(E,\Omega)} \int_{\Omega}|\nabla \phi|^2,
\end{equation}
where $\mathcal{L}(E,\Omega)$ denotes the class of all locally Lipschitz functions $\phi$ on $\Omega$ such that $0 \le \phi \le 1$ and $\phi|_E=1$. By the Dirichlet principle, the function $\phi_{min}$ attaining the infimum in (\ref{chapter2, 2.1}) is precisely the Perron solution of the following Dirichlet problem on $\Omega-E$:
$$
\Delta u=0; \quad u=0\ \text{n.e. on}\ \partial \Omega; \quad u=1\ \text{n.e. on}\ \partial E.
$$
Dirichlet capacity is conformally invariant, and its relation with Green capacity is given by
\begin{equation}
\frac{C_d(E,\Omega)}{2\pi} =-\frac{1}{\log C_g(E, \Omega)}.\label{chapter2, 2.2}
\end{equation}
We shall need the following property.

\begin{proposition}[cf. \cite{Chen2023, Grigoryan1999}] \label{prop7.1}
Let $\Omega \subset \mathbb{C}$ be a bounded domain, and let $U$ be a relatively compact open subset of $\Omega$. Then for any $w \in U$,
\begin{equation}\label{chapter2, 2.3}
\min_{\partial U}(-g_{\Omega}(\cdot, w)) \le \frac{2\pi}{C_d(E, \Omega)} \le \max_{\partial U} (-g_{\Omega}(\cdot,w)).
\end{equation}
\end{proposition}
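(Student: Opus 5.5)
The plan is to represent the capacitary potential of $E=\overline{U}$ as a Green potential and evaluate it at the interior point $w$. Here I read $E$ in (\ref{chapter2, 2.3}) as $\overline{U}$, which is compact in $\Omega$. Let $u=\phi_{min}$ be the extremal function for $C_d(\overline U,\Omega)$, i.e.\ the Perron solution of the Dirichlet problem on $\Omega\setminus\overline U$. We extend it by $u\equiv 1$ on $\overline U$.

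\textbf{Step 1: $u$ is a Green potential.} The extended $u$ is superharmonic on $\Omega$, harmonic off $\partial U$, and vanishes n.e.\ on $\partial\Omega$. The Riesz decomposition then gives
$$
u(z)=\int_{\partial U}\bigl(-g_\Omega(z,\zeta)\bigr)\,\mathrm{d}\nu(\zeta),
$$
where $\nu=-\frac{1}{2\pi}\Delta u$ is a positive measure supported on $\partial U$. This uses the normalization $g_\Omega(z,\zeta)\sim\log|z-\zeta|$ and the sign convention $g_\Omega<0$.

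\textbf{Step 2: the total mass of $\nu$.} I will show $\nu(\partial U)=C_d(\overline U,\Omega)/(2\pi)$. The Dirichlet integral of the Green potential equals $2\pi$ times its energy, so
$$
\int_\Omega|\nabla u|^2=2\pi\int u\,\mathrm{d}\nu .
$$
Since $u=1$ n.e.\ on $\partial U$ and $\nu$ charges no polar set, the right-hand side equals $2\pi\,\nu(\partial U)$. The left-hand side is $C_d(\overline U,\Omega)$. Alternatively, one can compute the flux of $u$ through a smooth Jordan curve in $\Omega\setminus\overline U$ enclosing $\overline U$, which equals $2\pi\nu(\partial U)$, and compare it with Green's formula for $\int|\nabla u|^2$. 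This is consistent with (\ref{chapter2, 2.2}).

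\textbf{Step 3: evaluate at $w$.} Since $w\in U$ lies in the interior of $\overline U$, we have $u(w)=1$ exactly, not merely n.e. Hence
$$
1=\int_{\partial U}\bigl(-g_\Omega(w,\zeta)\bigr)\,\mathrm{d}\nu(\zeta).
$$
By symmetry $g_\Omega(w,\zeta)=g_\Omega(\zeta,w)$, so the integrand lies between $\min_{\partial U}(-g_\Omega(\cdot,w))$ and $\max_{\partial U}(-g_\Omega(\cdot,w))$. It follows that
$$
\min_{\partial U}(-g_\Omega(\cdot,w))\,\frac{C_d}{2\pi}\ \le\ 1\ \le\ \max_{\partial U}(-g_\Omega(\cdot,w))\,\frac{C_d}{2\pi},
$$
and rearranging gives (\ref{chapter2, 2.3}). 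The extrema exist because $-g_\Omega(\cdot,w)$ is continuous on the compact set $\partial U$, which avoids $w$.

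\textbf{Main obstacle.} The delicate point is Step 2 without regularity assumptions on $\partial\Omega$ or $\partial U$. I would handle it by exhausting $\Omega$ by smoothly bounded domains $\Omega_j\uparrow\Omega$ and replacing $U$ by smooth $U_j$. On these the flux computation is classical. One then passes to the limit using the monotone convergence of the Green functions $g_{\Omega_j}\downarrow g_\Omega$ and of the Dirichlet capacities, together with the continuity of the min and max of $-g$ on $\partial U$.

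A cruder alternative avoids Step 2 altogether. By the maximum principle on $\Omega\setminus\overline U$, one has $-g(\cdot,w)/M\le u\le -g(\cdot,w)/m$, where $M$ and $m$ denote the max and min in (\ref{chapter2, 2.3}). One can then compare fluxes at $\partial\Omega$ for regular $\Omega$ and approximate as above.
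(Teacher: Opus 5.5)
The paper does not prove this proposition; it only cites Chen and Grigor'yan. Your argument, reading $E=\overline{U}$, is correct and is essentially the standard proof behind those references: your identity $1=u(w)=\int_{\partial U}(-g_\Omega(w,\zeta))\,d\nu(\zeta)$ with $\nu(\partial U)=C_d(\overline{U},\Omega)/2\pi$ is the measure-theoretic form of the smooth-case Green identities $2\pi=\int_{\partial U}(-g_\Omega(\cdot,w))\,\partial_n u\,d\sigma$ and $\int_{\partial U}\partial_n u\,d\sigma=C_d(\overline{U},\Omega)$, where $\partial_n$ is the normal derivative from $\Omega\setminus\overline{U}$ toward $U$.

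One point in Step 1 needs tidying. Extending $u$ by $1$ on $\overline{U}$ need not give a lower semicontinuous function at irregular points of $\partial U$, so pass to its lower semicontinuous regularization. This changes $u$ only on a polar set, which $\nu$ does not charge, so Steps 2 and 3 go through unchanged.
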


We also need the following result of B{\l}ocki--Zwonek \cite{BlockiZwonek2018}.

\begin{theorem}[\cite{BlockiZwonek2018}]\label{thm7.2}
Let $\Omega \subset \mathbb{C}$ be a domain, $w \in \Omega$, and $r \in (0, \delta_{\Omega}(w))$. Then
$$
K_{\Omega}(w) \le \frac{1}{2\pi r^2 \left( -\max_{\partial D(w,r)}g_{\Omega}(\cdot, w)   \right)}.
$$
\end{theorem}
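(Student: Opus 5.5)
This is the result of B{\l}ocki--Zwonek, and I would prove it by a level-set argument for the Green function. Write $a:=-\max_{\partial D(w,r)}g_{\Omega}(\cdot,w)>0$ and use the extremal characterization
$$
K_{\Omega}(w)=\sup\{|f(w)|^2: f\in A^2(\Omega),\ \|f\|_{L^2(\Omega)}=1\}.
$$
It then suffices to show
$$
\int_{\Omega}|f|^2\,\mathrm{d}\lambda \ge 2\pi a r^2 |f(w)|^2 \qquad\text{for every } f\in A^2(\Omega).
$$
For the first step, note that $g_{\Omega}(\cdot,w)$ is harmonic on $D(w,r)\setminus\{w\}$ and tends to $-\infty$ at $w$. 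The maximum principle therefore gives $g_{\Omega}\le -a$ on $D(w,r)$. Consequently, for each $t\in(0,a)$ the sublevel set $\Omega_t:=\{g_{\Omega}(\cdot,w)<-t\}$ contains $D(w,r)$, and its boundary $\{g_{\Omega}=-t\}$ is a (generically smooth) level curve $\gamma_t$ surrounding $D(w,r)$.

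Next I would apply the coarea formula on $\{-a<g<0\}$:
$$
\int_{\Omega}|f|^2 \ge \int_0^a\Big(\int_{\gamma_t}\frac{|f|^2}{|\nabla g|}\,\mathrm{d}s\Big)\mathrm{d}t.
$$
So it suffices to prove the pointwise-in-$t$ inequality
$$
\int_{\gamma_t}\frac{|f|^2}{|\nabla g|}\,\mathrm{d}s\ \ge\ 2\pi r^2|f(w)|^2 .
$$
The available information is as follows. The harmonic measure of $\Omega_t$ at $w$ is $\mathrm{d}\omega=\frac{|\nabla g|}{2\pi}\mathrm{d}s$ on $\gamma_t$, since $g+t$ is the Green function of $\Omega_t$. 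Hence $\int_{\gamma_t}|\nabla g|\,\mathrm{d}s=2\pi$, and subharmonicity of $|f|$ gives $|f(w)|\le \frac{1}{2\pi}\int_{\gamma_t}|f|\,|\nabla g|\,\mathrm{d}s$. Cauchy--Schwarz in the form
$$
\Big(\int_{\gamma_t}|f|\,\mathrm{d}s\Big)^2\le \int_{\gamma_t}\frac{|f|^2}{|\nabla g|}\,\mathrm{d}s\cdot\int_{\gamma_t}|\nabla g|\,\mathrm{d}s
$$
then reduces the matter to the estimate $\int_{\gamma_t}|f|\,\mathrm{d}s\ge 2\pi r|f(w)|$.

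This last estimate is the step I expect to be the main obstacle. It asks that the arclength mean of a subharmonic function over a curve enclosing $D(w,r)$ is at least its value at the centre, weighted by the length $2\pi r$. My first attempt would be to compare $\Omega_t$ with $D(w,r)$. Since $D(w,r)\subset\Omega_t$, the Green function of $\Omega_t$ dominates that of $D(w,r)$ and the harmonic-measure densities can be compared; this should give $\mathrm{d}s\ge r\,\mathrm{d}\omega\cdot 2\pi$ in an averaged sense. If that comparison is not directly available, I would instead apply Cauchy--Schwarz with $|f|^2$ in place of $|f|$ and use subharmonicity of $|f|^2$ together with $\int_{\gamma_t}|\nabla g|\,\mathrm{d}s=2\pi$.

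Finally, I would integrate the resulting bound in $t$ over $(0,a)$, which produces the factor $a$ and yields $\|f\|^2\ge 2\pi a r^2|f(w)|^2$, that is, the claimed bound on $K_{\Omega}(w)$. As a sanity check, for $\Omega=D(w,1)$ the right-hand side becomes $1/(2\pi r^2\log\frac1r)\ge e/\pi\ge 1/\pi=K_{\Omega}(w)$, consistent with the statement.
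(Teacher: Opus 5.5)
The paper gives no proof of this statement; it quotes it from B{\l}ocki--Zwonek, so your argument has to stand on its own. Your reduction is correct and produces the exact constant: the coarea formula on $\{-a<g<0\}$, the flux identity $\int_{\gamma_t}|\nabla g|\,\mathrm{d}s=2\pi$ and Cauchy--Schwarz reduce everything to
$$\int_{\gamma_t}|f|\,\mathrm{d}s\ \ge\ 2\pi r\,|f(w)|,\qquad 0<t<a.$$
But this is where the substance lies, and neither route you propose establishes it. Both use only subharmonicity of $|f|$ (or $|f|^2$) together with the harmonic measure $\frac{1}{2\pi}|\nabla g|\,\mathrm{d}s$. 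So in effect they need the pointwise bound $|\nabla g|\le 1/r$ on $\gamma_t$; your fallback needs exactly this bound to pass from $\int|f|^2|\nabla g|\,\mathrm{d}s$ to $\int|f|^2|\nabla g|^{-1}\,\mathrm{d}s$. That bound is false in general. Take $\Omega=\{\mathrm{Re}\,z>-1\}$ and $w=0$, so $g=\log|z/(z+2)|$ and $a=\log\frac{2-r}{r}$. At the point $-x$ where $\gamma_t$ meets $(-1,0)$ one has $|\nabla g|=\frac{2}{x(2-x)}$, which tends to $\frac{2}{r(2-r)}>\frac1r$ as $t\to a$. Worse, the target inequality fails for positive harmonic functions. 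Let $u$ be the Poisson integral over $\Omega_t$ of a nonnegative continuous bump on $\gamma_t$ concentrated near a point where $|\nabla g|>1/r$; then $\int_{\gamma_t}u\,\mathrm{d}s<2\pi r\,u(w)$. So no argument that compares harmonic measure with arclength can close this step, and the holomorphy of $f$ must be used.

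The missing ingredient is the Cauchy kernel in place of the Poisson kernel. Since $g\le -a$ on $\overline{D(w,r)}$, every $z\in\gamma_t$ with $t<a$ satisfies $|z-w|>r$. Cauchy's formula on $\Omega_t$ holds even if $\Omega_t$ is multiply connected, and gives
$$|f(w)|=\Big|\frac{1}{2\pi i}\int_{\partial\Omega_t}\frac{f(z)}{z-w}\,\mathrm{d}z\Big|\le\frac{1}{2\pi r}\int_{\gamma_t}|f|\,\mathrm{d}s.$$
This is exactly what you need, and your chain then yields $\|f\|^2_{L^2(\Omega)}\ge 2\pi a r^2|f(w)|^2$. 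Your Green-function comparison can be made to work when $\Omega_t$ is simply connected. There Schwarz's lemma gives $|\phi'(0)|\ge r$ for the Riemann map $\phi:\mathbb{D}\to\Omega_t$ with $\phi(0)=w$, and subharmonicity of $|(f\circ\phi)\phi'|$ finishes; but that step again uses holomorphy, and $\Omega_t$ need not be simply connected. You should also justify the level-set calculus: $\Omega_t\Subset\Omega$, $\partial\Omega_t=\gamma_t$ smooth for all but finitely many $t$, and the flux identity. To do so, first prove the estimate for smoothly bounded domains. Then exhaust $\Omega$ by such domains $\Omega_j\uparrow\Omega$, using $K_{\Omega_j}(w)\downarrow K_\Omega(w)$ and $g_{\Omega_j}(\cdot,w)\to g_\Omega(\cdot,w)$ uniformly on $\partial D(w,r)$.
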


\begin{proof}[Proof of Theorem \ref{upperboundK}]
Let $r=\frac{\delta_{\Omega}(w)}{3}$. First, we reduce the problem to estimating 
$$C_d\left(\overline{D\left(w,\frac{\delta_{\Omega}(w)}{3}\right)},\Omega \right).$$ 
Set
$$
h_w(z):=\log \frac{|z-w|}{r}-g_{\Omega}(z,w).
$$
Then $h_w$ is harmonic on $\Omega$, equals $-g_{\Omega}(z,w)$ on $\partial D(w,r)$, and is nonnegative on $D(w, \delta(w))$. Therefore, by Harnack's inequality,
$$
\max_{z\in \partial D(w,r)} h_w(z) \le \frac{\delta_{\Omega}(w)+r}{\delta_{\Omega}(w)-r}h_w(0) \le \left( \frac{\delta_{\Omega}(w)+r}{\delta_{\Omega}(w)-r} \right)^2 \min_{z\in \partial D(w,r)}h_w(z).
$$
That is,
$$
\max_{\partial D(w,r)} (-g_{\Omega}(\cdot, w)) \le  \left( \frac{\delta_{\Omega}(w)+r}{\delta_{\Omega}(w)-r} \right)^2 \min_{\partial D(w,r)}(-g_{\Omega}(\cdot, w)).
$$
Combining Theorem \ref{thm7.2} with (\ref{chapter2, 2.3}), we obtain
\begin{eqnarray}
\notag K_{\Omega}(w)&\le & \frac{1}{2\pi r^2} \cdot \frac{1}{\min_{\partial D(w,r)}(-g_{\Omega}(\cdot,w))}\\
\notag &\le & \frac{1}{2\pi r^2}  \left( \frac{\delta_{\Omega}(w)+r}{\delta_{\Omega}(w)-r} \right)^2 \cdot \frac{C_d\left(\overline{D(w,r)},\Omega\right)}{2\pi}\\
&=& \frac{9}{\pi^2 \delta_{\Omega}(w)^2}\cdot C_d\left(\overline{D\left(w,\frac{\delta_{\Omega}(w)}{3}\right)},\Omega\right).\label{3.1}
\end{eqnarray}

Next, by the definition of Dirichlet capacity,
$$
C_d\left(\overline{D\left(w,\frac{\delta_{\Omega}(w)}{3}\right)},\Omega\right)=C_d \left( \Omega^c, \mathbb{C}_{\infty} \setminus \overline{D\left(w, \frac{\delta_{\Omega}(w)}{3}\right)} \right).
$$
Consider the mapping
$$
T:  \mathbb{C}_{\infty} \setminus \overline{D\left(w, \frac{\delta_{\Omega}(w)}{3}\right)} \to \mathbb{D}, \quad z \mapsto \frac{\delta_{\Omega}(w)}{3(z-w)}.
$$
Then $T(\Omega^c)\subset D\left(0,\frac{1}{3}\right)$. By conformal invariance of Dirichlet capacity,
$$
C_d \left( \Omega^c, \mathbb{C}_{\infty} \setminus \overline{D\left(w, \frac{\delta_{\Omega}(w)}{3}\right)} \right)=C_d(T(\Omega^c),\mathbb{D}).
$$
Let $E(w,R)=\overline{D(w,R)} \setminus \Omega$, with $R$ to be determined. Since $$\Omega^c=E(w,R)\cup \left( \overline{\Omega^c \setminus E(w,R)}   \right),$$ we have
$$
T(\Omega^c)=T(E(w,R))\cup T\left(\overline{\Omega^c \setminus E(w,R)}  \right).
$$
Let $d_1=d(T(\Omega^c),\mathbb{D}) \ge \frac{2}{3}$. Then by (\ref{chapter2, 2.2}),
\begin{eqnarray}
\notag \frac{C_d(T(\Omega^c),\mathbb{D})}{2\pi}&=&\frac{2\pi}{-\log C_g(T(\Omega^c), \mathbb{D})}\\
\notag&\le&\frac{1}{-(\log \mathrm{Cap}(T(\Omega^c))-\log d_1)}\\
\notag&\le&\frac{1}{{\log \frac{2}{3\mathrm{Cap}(T(\Omega^c))}}}\\
&\le& \frac{1}{{\log \frac{2}{3\mathrm{Cap}(T(E(w,R)))}}} +\frac{1}{\log \frac{2}{3\mathrm{Cap}(T(\overline{\Omega^c-E(w,R)}))}}        , \label{3.2}
\end{eqnarray}
where the last inequality follows from Theorem 5.1.4 of \cite{Ransford}. Since
$$
T(\overline{\Omega^c-T(E(w,R))})\subset D\left(0,\frac{\delta_{\Omega}(w)}{3R}\right),
$$
the second term is bounded by $\left( \log \frac{2R}{\delta_{\Omega}(w)}  \right)^{-1}$. On the other hand, the mapping $T$ satisfies
$$
\left| T(z_1)-T(z_2)    \right|=\frac{\delta_{\Omega}(w)}{3} \cdot \frac{|z_1-z_2|}{|z_1-w||z_2-w|}\le \frac{|z_1-z_2|}{3\delta_{\Omega}(w)}, \quad \forall z_1, z_2 \in E(w,R).
$$
By Theorem 2.1 (2),
$$
\mathrm{Cap}(T(E(w,R)))\le \frac{1}{3\delta_{\Omega}(w)}\cdot \mathrm{Cap}(E(w,R)).
$$
Therefore, when $\mathrm{Cap}(E(w,R)) \le 2\delta_{\Omega}(w)$, it follows from (\ref{3.2}) that
$$
C_d(T(\Omega^c),\mathbb{D})\le \frac{2\pi}{   \log \frac{2\delta_{\Omega}(w)}{\mathrm{Cap}(E(w,R))} }+\frac{2\pi}{ \log \frac{2R}{\delta_{\Omega}(w)}       }.
$$
Substituting this estimate into (\ref{3.1}) completes the proof.
\end{proof}
\begin{proof}[Proof of Theorem \ref{lowerboundd}]
$(1)$ Let $h(t)=Ct^{\alpha}$. On each $A_k$, when
$$w\in  \{ z\in \mathbb{C}: 2h(r_k) \le |w| \le 3h(r_k) \},$$
we estimate $K^{(1)}_{\Omega}(w)$ and $K_{\Omega}(w)$ separately.

First, take $a_1=0$, $a_2 \in \partial D(0,h(r_k))$, and $a_3 \in \partial D(0,r_k)\cap \partial \Omega$. This is possible because $\partial \Omega$ is assumed to be $h$-uniformly perfect; otherwise, if the inner or outer boundary of $A_k$ had no intersection with $\partial \Omega$, a contradiction would arise. Let $E_i=\overline{D\left(a_i, \frac{h(r_k)}{n}\right)}-\Omega$, where $n$ is a fixed sufficiently large integer. Then by Proposition \ref{prop5.4} and Theorem \ref{thm2.6},
\begin{eqnarray*}
K^{(1)}_{\Omega}(w) &\gtrsim& \frac{|a_2|^2}{(|w|^2|w-a_2|^4+|w|^4|w-a_2|^2) \log \frac{8r_k}{\min_{i=1,2,3} \mathrm{Cap}(E_i)}}\\
&\gtrsim& \frac{1}{h(r_k)^4\log \frac{8r_k}{\left(\frac{h(r_k)}{n}\right)^{\frac{\alpha}{2-\alpha}}}}\\
&\gtrsim& \frac{1}{h(r_k)^4 \log \frac{1}{r_k}}.
\end{eqnarray*}

By Theorem \ref{upperboundK},
$$
K_{\Omega}(w) \lesssim \frac{1}{\delta_{\Omega}(w)^2} \left( \frac{1}{\log \frac{2\delta_{\Omega}(w)}{\mathrm{Cap}(E(w,R))}} +\frac{1}{\log \frac{2R}{\delta_{\Omega}(w)}}   \right).
$$
Take $R=\frac{r_k}{2}$. Note that $\delta_{\Omega}(w) \asymp h(r_k)$, and by the hypothesis,
$$\mathrm{Cap}(E(w, R))=\mathrm{Cap}(\overline{D(0, h(r_k)) \setminus \Omega}) \lesssim h(r_k)^{\alpha'}.$$
Therefore,
$$
K_{\Omega}(w) \lesssim \frac{1}{h(r_k)^2 \log \frac{1}{r_k}}.
$$

Combining the above estimates, at the point $w$ we have
$$b_{\Omega}(w) \gtrsim \frac{1}{h(r_k)} \asymp \frac{1}{|w|}.$$
The rest of the argument follows the proof of Theorem 1.1 in \cite{XiongZheng}. If a geodesic $\gamma$ passes through $A_k$, consider the portion $\gamma|_{[a_k,b_k]}$ such that $|\gamma(a_k)|={3h(r_k)}$ and $|\gamma(b_k)|={2h(r_k)}$. Then
\begin{eqnarray*}
\int_{\gamma|_{[a_k,b_k]}} b_{\Omega}(z) |\mathrm{d}z| &=& \int_{a_k}^{b_k} b_{\Omega}(\gamma(t))|\mathrm{d}\gamma(t)| \\
&\ge& \left| \int_{a_k}^{b_k} b_{\Omega}(\gamma(t))\mathrm{d}|\gamma(t)|   \right|\\
&\gtrsim& \int^{3h(r_k)}_{2h(r_k)}\frac{1}{h(r_k) } \mathrm{d}r\\
&\gtrsim& 1.
\end{eqnarray*}
Let $n$ be the integer such that $r_{n+1}<|z|\le r_{n}$. Then any geodesic $\gamma$ starting from $z_0$ and ending at $z$ intersects a number of annuli of the form $A_k$ that is of the order of $n$. Hence
$$
d_{\Omega}(z,z_0) \gtrsim n \gtrsim \log \log \frac{1}{|z|}.
$$

$(2)$ The proof is similar to that of $(1)$ and is left to the reader.
\end{proof}

{\bf Acknowledgements.} We are grateful to Prof. Bo-Yong Chen and Dr. Yuan-pu Xiong for many inspiring discussions and critical suggestions.

\end{document}